% Title: Corrector theory for MsFEM and HMM in random media
%
% Author: Bal and Jing
%
% First version: Nov. 11, 2010.
% Last update: Jul. 18, 2011.
% Preamble
%%%%% PACKAGES INCLUDED %%%%%%
\documentclass[11pt]{article}
\usepackage[colorlinks=true, 
urlcolor=blue,linkcolor=blue,citecolor=blue]{hyperref}
\usepackage{amsmath,amsfonts,amssymb,amsthm}
\usepackage{mathrsfs}
\usepackage{color}
\usepackage{amscd}
\usepackage{enumerate}

%%%%%% BASIC LAYOUT %%%%%
\setlength{\oddsidemargin}{0pt}
\setlength{\evensidemargin}{0pt}
\setlength{\textwidth}{6in}
\setlength{\textheight}{8.5in}
\setlength{\voffset}{-0.8in}

\setlength{\parskip}{3px}

%%%%% THEOREMS %%%%%

\newtheorem{theorem}{Theorem}[section]
\newtheorem{lemma}[theorem]{Lemma}

\newtheorem{proposition}[theorem]{Proposition}

\theoremstyle{remark}

\numberwithin{equation}{section}

% 1. Number fields

\def\R{{\mathbb R}}

% 2. Probability 
\def\Pb{{\mathbb{P}}}			% probability measure
\def\Var{{\mathrm{Var}}}		% variance
\def\E{{\mathbb{E}}}			% mathematical expectation
		% Borel algebra
\def\F{ {\mathcal{F}} }			% sigma algebra

% 3. Symbols
\def\eps{\varepsilon}
\def\L{ {\mathcal{L}} }

\def\U{ {\mathcal{U}} }
\def\lft{\mathscr{L}}
\def\astar{ {a^*} }
\def\Ct{ {\mathcal{C}} }
\newcommand{\I}{\mathcal{I}}

%%%%%%%%%
\title{Corrector theory for MsFEM and HMM in random 
media}
\author{Guillaume Bal \and Wenjia Jing\thanks{Department of Applied 
Physics and Applied
Mathematics, Columbia University, NY 10027.  Email addresses: {\tt gb2030@columbia.edu}, {\tt wj2136@columbia.edu}.
}}
%%%%%%%%%
\begin{document}
\maketitle

\begin{abstract}
We analyze the random fluctuations of several multi-scale algorithms such as the multi-scale finite element method (MsFEM) and the finite element heterogeneous multi-scale method (HMM), that have been developed to solve partial differential equations with highly heterogeneous coefficients. Such multi-scale algorithms are often shown to correctly capture the homogenization limit when the highly oscillatory random medium is stationary and ergodic. This paper is concerned with the random fluctuations of the solution about the deterministic homogenization limit. We consider the simplified setting of the one dimensional elliptic equation, where the theory of random fluctuations is well understood. We develop a fluctuation theory for the multi-scale algorithms in the presence of random environments with short-range and long-range correlations. 

For a given mesh size $h$, we show that the fluctuations converge in distribution in the space of continuous paths to Gaussian processes as the correlation length $\eps\to0$. We next derive the limit of such Gaussian processes as $h\to0$ and compare this limit with the distribution of the random fluctuations of the continuous model. When such limits agree, we conclude that the multi-scale algorithm captures the random fluctuations accurately and passes the corrector test. This property serves as an interesting Benchmark to assess the behavior of the multi-scale algorithm in practical situations where the assumptions necessary for the theory of homogenization are not met. 

What we find is that the computationally more expensive methods MsFEM, and HMM with a choice of parameter $\delta=h$, correctly capture the random fluctuations both for short-range and long-range oscillations in the medium. The less expensive method HMM with $\delta<h$ correctly captures the fluctuations for long-range oscillations and strongly amplifies their size in media with short-range oscillations. We present a modified scheme with an  intermediate computational cost that captures the random fluctuations in all cases.  

\smallskip

{\bf Keywords:} Equations with random coefficients, multi-scale finite element method, heterogeneous multi-scale method, corrector test, correlation ranges.

\smallskip
{\bf AMS subject classification:}
65C99, 65Y20, 34E13

\end{abstract}

%%%%%%%%%%%%
%%%%%%%%%%%%
\section{Introduction}
\label{sec:intro}

Differential equations with highly oscillatory coefficients arise naturally in many areas of applied sciences and engineering from the analysis of composite materials to the modeling of geological basins. Often, it is impossible to solve the microscopic equations exactly, in which case we can either solve macroscopic models when they exist or devise multi-scale algorithms that aim to capture as much of the microscopic scale as possible. 
When the coefficients are periodic functions or stationary and ergodic processes, the solution to the heterogeneous equation is often well approximated  by the deterministic solution to a homogenized equation. This is the well known \emph{homogenization theory}. In many applications, such as parameter estimation and uncertainty quantification, estimating the random fluctuations (finding the random corrector) in the solution is as important as finding its homogenized limit. 

Finding the homogenized coefficients may be a daunting computational task and the assumptions necessary to the applicability of homogenization theory may not be met \cite{JKO-SV-94,Kozlov,PV-81}. Several numerical methodologies have been developed to find accurate approximations of the solution without solving all the details of the micro-structure \cite{A-SINUM-04,EMZ-JAMS-05,EHW-SINUM-00}. Examples include the multi-scale finite element method (MsFEM) and the finite element heterogeneous multi-scale method (HMM). Such schemes are shown to perform well in the homogenization regime, in the sense that they approximate the solution to the homogenized equation without explicitly calculating any macroscopic, effective medium, coefficient. Homogenization theory thus serves as a Benchmark that ensures that the multi-scale scheme performs well in controlled environments, with the hope that it will still perform well in non-controlled environments, for instance when ergodicity and stationarity assumptions are not valid. 

This paper aims to present another Benchmark for such multi-scale numerical schemes that addresses the limiting stochasticity of the solutions. We calculate the limiting (probability) distribution of the random corrector given by the multi-scale algorithm when the correlation length of the medium tends to $0$ at a fixed value of the discretization size $h$. We then compare this distribution to the distribution of the corrector of the continuous equation. When these distributions are close, in the sense that the $h-$dependent distribution converges to the continuous distribution as $h\to0$, we deduce that the multi-scale algorithm asymptotically correctly captures the randomness in the solution and passes the random corrector test.

The above proposal requires a controlled environment in which the theory of correctors is available. There are very few equations for which this is the case \cite{B-CLH-08,BGMP-AA-08,FOP-SIAP-82}. In this paper, we initiate such an analysis in the simple case of the following one-dimensional second-order elliptic equation: 
\begin{equation}
\left\{
\begin{aligned}
& - \frac{d}{dx} a(\frac{x}{\eps},\omega) \frac{d}{dx} u_\eps (x, \omega) = 
f(x), \, x \in (0, 1),\\
& u_\eps(0, \omega) = u_\eps (1, \omega) = 0.
\end{aligned}
\right.
\label{eq:rode}
\end{equation}
Here, the diffusion coefficient $a(\frac{x}{\eps},\omega)$ is modeled as a random process, where $\omega$ denotes the realization in an abstract probability space $(\Omega,\F,\Pb)$ in which the random process and all limits considered here are constructed. The correlation length $\eps$ is much smaller  than $L=1$, the length of the domain, which makes the random coefficient highly oscillatory.

We chose this equation for two reasons. First, many multi-scale numerical schemes have been developed to solve its generalization in higher dimensions. Second, both the homogenization and the corrector theory for this elliptic equation are well known. When $a(x,\omega)$ is stationary, ergodic, and uniformly elliptic, i.e., $\lambda \le a(x,\omega) \le \Lambda$ for almost every $x$ and $\omega$, then the solution $u_\eps$ converges to the following homogenized equation with deterministic and constant coefficient (this generalizes to higher dimensions as well \cite{Kozlov, PV-81}):
\begin{equation}
\left\{
\begin{aligned}
& - \frac{d}{dx} \astar \frac{d}{dx} u_0 (x) = f(x), \, x \in (0, 
1),\\
& u_0 (0) = u_0 (1) = 0.
\end{aligned}
\right.
\label{eq:hode}
\end{equation}
The coefficient $\astar$ is the harmonic mean of $a(x,\omega)$. 
Furthermore, if the deviation of $1/a(x,\omega)$ from its mean $1/\astar$ 
is strongly mixing with integrable mixing coefficient, a notion that will 
be defined in the next section and which in particular implies that the 
random coefficient has short range correlation (SRC), then the corrector 
theory in \cite{BP-99} shows that \begin{equation}
\frac{u_\eps - u_0}{\sqrt{\eps}}(x) \xrightarrow[\eps \to 
0]{\mathrm{distribution}} \U(x; W),
\end{equation}
where $\U(x; W)$ is a Gaussian random process that may be conveniently 
written as a Wiener integral; see \eqref{eq:cltone} below. The convergence 
above is in the sense of distribution in the space of continuous paths, 
which will be  denoted by $\Ct([0,1])$. When the deviation $1/a - 
1/\astar$ does not de-correlate fast enough, the normalization factor 
$\sqrt{\eps}$ is no longer correct. In fact, for a large class of random 
processes with long range correlation (LRC) defined more precisely later, 
it is proved in \cite{BGMP-AA-08} that a similar convergence result holds 
with the following  modifications: The normalization factor should be 
replaced by $\eps^{\frac \alpha 2}$, where $0< \alpha <1$ describes the 
rate of de-correlation.  The limiting process is then $\U_H(x; W^H)$, which 
is defined in \eqref{eq:clttwo} and has the form of a stochastic integral 
with respect to fractional Brownian motion. 

Let us assume that we want to solve \eqref{eq:rode} numerically. 
We denote by $h$ the discretization size and by $u^h_\eps(x)$ the solution of the 
scheme. The standard finite element solution of the deterministic limit \eqref{eq:hode} is denoted by $u^h_0(x)$. Our goal is to characterize the limiting distribution of $u^h_\eps - u^h_0$ as a random process after proper rescaling by $\eps^{\frac{\alpha \wedge 1}{2}}$. Here and below, we use $c \wedge d$ to denote $\min\{c, d\}$. Obviously, this normalization parameter is chosen to be consistent with the aforementioned corrector theories.  

We say that a numerical procedure is consistent with the corrector theory and that it passes the corrector test when the following diagram commutes:
\begin{equation}
\begin{CD}
   {\displaystyle \frac{u^h_\eps - u^h_0}{\eps^{\frac{\alpha \wedge 
1}{2}}}(x,\omega)}
   @>{h \to 0}>(i)> {\displaystyle \frac{u_\eps - 
   u_0}{\eps^{\frac{\alpha \wedge 1}{2}}}(x,\omega)}\\
   @V {\eps \to 0}V(ii)V  @V(iii)V{\eps \to 0}V\\
   {\U^h_{\alpha \wedge 1} (x; W^{\alpha \wedge 1})} @>{h \to 0}>(iv)> 
   {\U_{\alpha \wedge 1} (x; W^{\alpha \wedge 1})}.
 \end{CD}
 \label{eq:cd}
\end{equation}
Here, $W^1 = W$ is the standard Brownian motion whereas $W^\alpha = W^H$ is 
the fractional Brownian motion with Hurst index $H = 1- {\frac \alpha 2}$. 
Similarly, $\U_{\alpha \wedge 1}$ is defined so that $\U_1 = \U(x; W)$ and 
$\U_\alpha = \U_H(x; W^H)$. We use this notation to include both random 
processes with SRC and those with LRC. In the above diagram, there are four 
convergence paths to be understood in the sense of distribution in $\Ct 
([0, 1])$. The convergence path (iii) is the corrector theories in 
\cite{BP-99, BGMP-AA-08}.  In (i), $h$ is sent to zero while $\eps$ is 
fixed. To check (i) is a numerical analysis question without multi-scale 
issues since the $\eps$-scale details are resolved. Convergence in (i) can 
be obtained path-wise and not only in distribution (path (iv) may also be 
considered path-wise). The main new mathematical difficulties we address in 
this paper, therefore lie in analyzing the paths (ii) and (iv). 

Our main results are stated for the multi-scale finite element method (MsFEM) \cite{HW-JCP-97, HWC-MC-99, H-MFEM-03}, and the finite element heterogeneous multi-scale method (HMM) \cite{EE-HMM-REV, EMZ-JAMS-05}. The main idea of MsFEM is to use multi-scale basis functions constructed from the local solutions of the elliptic operator in \eqref{eq:rode} on domains of size $h\ll1$. The main idea of HMM is to construct these basis functions on possibly smaller patches. HMM thus involves heterogeneous computations on domains of size $\delta\leq h$.

The analysis of MsFEM in  random settings is done in \cite{CS-SIAMNA-08, E-PHD}. HMM in random media was considered in \cite{EMZ-JAMS-05}. These works show that both MsFEM and HMM pass the test of homogenization theory in the sense that as $\eps\to0$, the limiting solution is a consistent discretization (i.e., with error converging to $0$ as $h\to0$) of the homogenized limit. This paper analyzes the corrector theory of both multi-scale methods.

For MsFEM and HMM with the parameter choice $\delta=h$ (these two methods are then the same in dimension $n=1$), the above diagram commutes. Path (i) 
holds as a result of standard finite element analysis. Path (ii) holds due 
to the self-averaging effect of integrals with oscillatory integrand 
$a^{-1}(x/\eps) - \astar^{-1}$, and $\U^h_{\alpha \wedge 1}$ is explicitly 
characterized as a Gaussian process. The expression for this process 
is complicated but converges to the right corrector as $h\to0$.

For HMM with $\delta<h$, path (i) does not hold because there is no homogenization effect when $h \ll \eps$.  Passing the corrector test then means that  (ii) followed by  (iv) yields the same result as (iii). Path (ii) is obtained with  $\U^h_{\alpha \wedge 1}$ a Gaussian process. HMM with $\delta<h$ then performs differently for SRC and LRC. In the first case, the limit process as $h\to0$ in (iv) is $(h/\delta)^{\frac{1}{2}} \U$: the limit is an amplified version of the theoretical corrector when $\delta<h$. In that sense, HMM with $\delta<h$ does not pass the corrector test. In the LRC case, however, and somewhat surprisingly at first, HMM passes the corrector test for all $0<\delta\leq h$. As a consequence, HMM with $\delta<h$ finds the corrector up to a rescaling coefficient that depends on the structure of the random medium and whose estimation may prove difficult for  environments in which the corrector theory is not valid or not known.

We consider a modification of the MSFEM and HMM schemes that is computationally more expensive than HMM with $\delta\ll h$ but  passes the corrector test independent of the underlying random structure. The proposed corrector test and our analysis, in spite of the limited scope of the one-dimensional equation, thus provide some guidance on the accuracy one may expect from a multi-scale algorithm based on a given computational cost and a given level of prior information about the heterogeneous medium (such as, e.g., its correlation properties) one is willing to make.

The rest of the paper is organized as follows. In the next section, we 
present our models for the random medium and formulate the main 
results of the paper. The derivations rely on establishing explicit expressions for 
$u^h_\eps - u^h_0$ . In section \ref{sec:stra}, we derive such expressions 
for a general numerical scheme satisfying specific assumptions on the structure of the associated 
stiffness matrix.  We show how to prove (ii) using these formulas. 
In  section \ref{sec:wctwo}, we show how to prove (iv) in the general setting.  
Both convergences in (ii) and (iv) can be viewed as weak convergence of 
measures in $\Ct$, which can be shown as usual by obtaining the convergence 
of finite dimensional distributions and establishing tightness. In section 
\ref{sec:msfem}, we apply the general framework to MsFEM and prove that 
diagram \eqref{eq:cd} commutes in this case. In section \ref{sec:hmm}, we 
consider HMM in both media with SRC or LRC. We emphasize where the 
amplification effect comes from in the case of SRC and why this effect 
disappears in the case of long-range media.  In section \ref{sec:disc}, we 
discuss methods to  eliminate the amplification effect of HMM. Some conclusions are offered in section \ref{sec:conclusion} while technical lemmas on fourth-order moments of random processes are postponed to the appendix. 

%%%%%%%%%%%%
%%%%%%%%%%%%
\section{Main results on the corrector test}
\label{sec:main}

In this section, we introduce our hypotheses on the random media, describe the multi-scale algorithms MsFEM and HMM and formulate our main convergence results.

To simplify notation, we drop the dependency in $\omega$ when this does not cause confusion. 
We define $a_\eps(x)=a(x/\eps)$. For a function $g$ in $L^p(D)$, we 
denote its norm by $\|g\|_{p,D}$.  When $D$ is the unit interval we drop 
the symbol $D$.  The natural space for \eqref{eq:rode} and \eqref{eq:hode} 
is the Hilbert space $H^1_0$. By the Poincar\'{e} inequality,  the 
semi-norm of $H^1_0$  defined by $|u|_{H^1, D} = \|du/dx\|_{2,D}$,
is equivalent to the standard norm. We use the notation $C$ to denote 
constants that may vary from line to line. When $C$ depends only on the 
elliptic constants $(\lambda, \Lambda)$, we refer to it as a {\it 
universal} constant.  Finally, the Einstein summation convention is frequently used: two repeated indices, such as in $c_i d^i$, are 
summed over their (natural) domain of definition.

\subsection{Random media models}
%%%%%%%%%%%%%%%%%%%%%%%
%%%%%%%%%%%%%%%%%%%%%%%

We model $a(x,\omega)$ as a random process on some probability space
$(\Omega, \F, \Pb)$, with parameter $x \in \R$. The coefficient 
$a(\frac{x}{\eps},\omega)$ in \eqref{eq:rode} is obtained by rescaling the 
spatial variable. 

A process $a(x)$ is called \emph{stationary} if the joint distribution of 
any finite collection of random variables $\{a(x_i,\omega)\}_{1\le i \le 
n}$ is translation invariant. Let $\E$ denote the mathematical expectation 
with respect to $\Pb$. We define
\begin{equation}
  \frac{1}{\astar} : = \E\left\{\frac{1}{a(0,\omega)}\right\}, \quad 
q(x,\omega) = \frac{1}{a(x,\omega)} - \frac{1}{\astar}.
  \label{eq:aqdef}
\end{equation}
Since $a$ is stationary, $\astar$ is a constant and $q(x,\omega)$ is a 
stationary process with mean zero.

The (auto-)correlation function $R(x)$ of $q$ is given by:
\begin{equation}
  R(x) : = \E \{q(y)q(y+x)\} = \E\{q(0)q(x)\}.
  \label{eq:Rdef}
\end{equation}
We find that  $R$ is symmetric, i.e., $R(x) = R(-x)$ and is a 
function of positive type in the sense of \cite[\S XI]{RS}. By Bochner's theorem, the Fourier transform of $R$ is non-negative. Let us define
\begin{equation}
\sigma^2 = \int_{\R} R(x) dx.
\label{eq:sigmadef}
\end{equation}
Then $\sigma^2$ is in $[0,\infty]$, and we assume here that $\sigma>0$. We 
say that $q(x,\omega)$ has \emph{short range correlation} (SRC) if $R$ is 
integrable. In this case, $\sigma$ is a positive finite number. We say $q$ 
has \emph{long range correlation} (LRC) if the correlation function is not 
integrable.

The decay of $R(x)$ at infinity describes the two-point decorrelation rate of the
process $q$. We also need the notion of \emph{mixing}. Consider a Borel set $A \subset \R$. Let $\F_A$ denote the Borel sub-$\sigma$-algebra of $\F$ induced by $\{q(x); x\in A\}$, and $L^2_A$ the space of $\F_A-$measurable and square integrable random variables.  A 
random process $a(x,\omega)$ is said to be $\rho$-\emph{mixing} if there 
exists some non-negative function $\rho(r): \R_+ \to \R_+$ with $\rho(r) 
\to 0$ at infinity, such that for any Borel sets $A$ and $B$ the following 
holds.
\begin{equation}
  \sup_{\xi \in L^2_A, \eta \in L^2_B} \frac{|\E\{\xi \eta\} - 
  \E\{\xi\}\E\{\eta\}|}{\Var\{\xi\}^{\frac 1 2}\Var\{\eta\}^{\frac 1 2}} 
  \le \rho(d(A,B)).
  \label{eq:rhodef}
\end{equation}
Here $d(A,B)$ is the distance between the two sets. The function $\rho(r)$ 
is called the $\rho$-mixing coefficient, and it reflects, roughly speaking, 
how fast random processes restricted on $A$ and $B$ become independent.  
More details on the notion of mixing can be found in \cite{Doukhan}. 

Our first set of assumptions on $a(x)$ and $q(x)$ are the following. We note that the third
assumption below implies the second one.

\begin{enumerate}
\item[(S1)] The random process $a(x)$ is stationary and uniformly elliptic 
with constants $(\lambda, \Lambda)$. Clearly, $\astar$ is uniformly elliptic with the same constants.

\item[(S2)] The random process $q(x)$ has SRC and we assume $\sigma > 0$ in 
\eqref{eq:sigmadef}.

\item[(S3)] The mixing coefficient $\rho(r)$ of $q(x)$ is decreasing in 
$r$, and $\rho^{\frac 1 2}(r) \in L^1(\R_+)$.
\end{enumerate}

The above assumptions are quite general. In particular, (S3) 
implies ergodicity of $q(x)$, and (S1) plus ergodicity is the standard 
assumption for homogenization theory; (S3) is the standard assumption to 
obtain central limit theorem of oscillatory integrals with integrand 
$q_\eps(x)$ as in \eqref{eq:cltbal}.

For random media with LRC, there is no general central limit theorem. A 
special family of random media investigated in \cite{BGMP-AA-08} is based 
on the following assumptions:

\begin{enumerate}
\item[(L1)] The process $q(x)$ is constructed as
\begin{equation}
q(x, \omega) = \Phi(g(x,\omega)),
\label{eq:Phidef}
\end{equation}
where $g_x$ is a stationary Gaussian random process with mean zero and 
variance one. Further, the correlation function $R_g$ of $g_x$ has the 
following asymptotic behavior:
\begin{equation}
R_g(\tau) \sim \kappa_g \tau^{-\alpha},
\label{eq:Rg}
\end{equation}
where $\kappa_g > 0 $ is a constant and $\alpha \in (0,1)$.

\item[(L2)] The function $\Phi(x)$ satisfies $|\Phi| \le q_0$ for some 
constant $q_0$, so that the process $a(x,\eps)$, constructed by the 
relation \eqref{eq:aqdef} for some positive constant $\astar$, satisfies 
uniform ellipticity with constants $(\lambda, \Lambda)$.

\item[(L3)] The function $\Phi$ integrates to zero against the standard 
Gaussian measure:
\begin{equation}
\int_{\R} \Phi(s) e^{-\frac{s^2}{2}} ds = 0.
\label{eq:odd}
\end{equation}
\end{enumerate}
The process $q(x)$ is stationary and mean-zero. More importantly, its correlation function $R(x)$
has a similar asymptotic behavior to that in \eqref{eq:Rg} with $\kappa_g$ replaced by $\kappa$, where
\begin{equation}
\kappa: = \kappa_g \left(\frac{1}{\sqrt{2\pi}} \int_{\R} s \Phi(s) 
e^{-\frac{s^2}{2}} ds\right)^2.
\label{eq:kappa}
\end{equation}
The integral above is assumed to be non-zero. Consequently, $R(x)$ is no 
longer integrable and $q(x)$ has LRC. We note that when $\alpha > 1$, the 
process constructed above has SRC and provides an example satisfying (S2).

\subsection{The multi-scale algorithms MsFEM and HMM}

We briefly introduce MsFEM and HMM and leave some details to later sections.
Assume $f \in L^2 \subset H^{-1}$. The weak solution to \eqref{eq:rode} is the unique
function $u_\eps \in H_0^1(0,1)$ such that
\begin{equation}
A_\eps(u,v) = F(v), \quad \forall v \in H^1_0(0,1).
\label{eq:wfmsfem}
\end{equation}
The associated bilinear and linear forms are defined as
\begin{equation}
A_\eps(u, v) : = \int_0^1 a_\eps(x) \frac{d u}{dx} \cdot \frac{d v}{dx} dx, 
\quad F(v) : = \int_0^1 f v \ dx.
\label{eq:linforms}
\end{equation}
Existence and uniqueness of $u_\eps$ are guaranteed by the uniform ellipticity
of $a_\eps(x)$. MsFEM and the HMM are finite element 
methods and so the ideas are to approximate $H^1_0$ by finite dimensional 
space and, when necessary, to approximate $A_\eps$ by an auxiliary bilinear form.

We partition the unit interval into $N$ small intervals of size $h = 
1/N$. We use this partition for MsFEM, HMM, and the standard FEM. The FEM basis functions are piece-wise linear ``hat functions", 
each of them peaking at one nodal point and vanishing at all other 
nodal points. Denote these hat functions by $\{\phi^j_0\}$ and denote the 
subspace of $H^1_0$ they span by $V^h_0$. The standard FEM approximates $H^1_0$ by $V^h_0$.  The idea of MsFEM is to replace the hat basis functions by multi-scale basis 
functions $\{\phi^j_\eps\}$. For instance, $\phi^j_\eps$ is constructed 
by solving the elliptic equation \eqref{eq:rode} locally on the 
support of $\phi^j_0$; see \eqref{eq:msfemb} below. Denote the span of 
these multi-scale basis functions by $V^h_\eps$. Then, the MsFEM solution, 
denoted by $u^h_\eps$, is the unique function in $V^h_\eps$ so that 
\eqref{eq:wfmsfem} holds for all $v \in V^h_\eps$. The well-posedness of 
this weak formulation is a consequence of the uniform ellipticity of $a_\eps$.

HMM aims to reduce the cost of MsFEM by computing heterogeneous solutions on smaller domains. Solutions and test functions are still sought in $V^h_0$. The bilinear form $A_\eps$ is then approximated by
\begin{equation*}
A^{h,\delta}_\eps(u,v) : = \sum_{k=1}^N \left(\frac{1}{\delta} \int_{I^\delta_k} a_\eps(x) \frac{d(\lft u)}{dx} \cdot \frac{d(\lft v)}{dx} 
\ dx\right) h,
\end{equation*}
where $I^\delta_k$ is a small sub-interval of size $\delta < h$ of the $k$th interval in 
the above partition. The operator $\lft$ is defined below in \eqref{eq:lft}.  This linear operator optimally uses the micro-scale calculation on $I^\delta_k$ in the same way that $\phi^j_\eps$ does in MsFEM. %The point is that it operates on even smaller patches.  
HMM solution is the unique function $u^{h,\delta}_\eps$ in $V^h_0$ so that \eqref{eq:wfmsfem}
holds when $H^1_0$ is replaced by $V^h_0$ and $A_\eps$ is replaced by $A^{h,\delta}_\eps$.
The well-posedness of this formulation is less immediate and will be proved below.

Throughout this paper, for simplicity in presentation, we assume that the micro-scale solvers in MsFEM and HMM, i.e., equations \eqref{eq:msfemb} and \eqref{eq:lft}, are exact.

%%%%%%%%%%%%%%
\subsection{Main theorems}

We state our main convergence theorems in the setting of MsFEM and HMM although they hold for more general schemes.

The first theorem analyzes MsFEM in the setting of SRC.
\begin{theorem}
  \label{thm:msfem}
  Let $u_\eps$ and $u_0$ be solutions to \eqref{eq:rode} and 
  \eqref{eq:hode}, respectively. Let $u^h_\eps$ be the solution to 
  \eqref{eq:rode} obtained by MsFEM and let $u^h_0$ be the standard finite 
  element approximation of $u_0$. Then we have:
  
  {\upshape (i)} Suppose that  $a(x)$ satisfies {\upshape (S1)} and $f$ is continuous on $[0,1]$.  
  Then
  \begin{equation}
    |u^h_\eps - u_\eps|_{H^1} \le \frac{h}{\lambda \pi} \|f\|_2, \quad 
    \|u^h_\eps - u_\eps\|_{2} \le \frac{h^2}{\lambda \pi^2} \|f\|_2.
    \label{eq:fem}
  \end{equation}
  Assume further that $q(x)$ satisfies   {\upshape (S2)}. Then
  \begin{equation}
   \sup_{x \in [0,1]} \left\lvert \E \left(u^h_\eps(x) - u^h_0(x)\right)^2 
\right \rvert \le C \frac{\eps}{h^2} \|R\|_{1,\R}(1+\|f\|_2),
    \label{eq:supL2}
  \end{equation}
  where $C$ is a universal constant and $R$ is the correlation function of $q$ as defined
  in \eqref{eq:Rdef}.

  {\upshape (ii)} Now assume further that $q(x)$ satisfies {\upshape (S3)}.  
Then,
  \begin{equation}
   \frac{u^h_\eps(x) - u^h_0(x)}{\sqrt{\eps}} \xrightarrow[\eps \to 
   0]{\mathrm{distribution}}
   \sigma \int_0^1 L^h (x,t) dW_t = : \U^h(x; W).
    \label{eq:cltone}
  \end{equation}
  The constant $\sigma$ is defined in \eqref{eq:sigmadef} and $W$ is 
  the standard Wiener process. The function $L^h(x,t)$ is explicitly given 
  in \eqref{eq:msfemL}. The convergence is in distribution in the space $\Ct$.

  {\upshape (iii)} Now let $h$ goes to zero, we have
  \begin{equation}
   \U^h(x; W) \xrightarrow[h \to 0]{\mathrm{distribution}} \U(x; W) : = 
   \sigma\int_0^1 L(x,t) dW_t.
    \label{eq:clttwo}
  \end{equation}
  The Gaussian process 
  $\U(x; W)$ was characterized in   {\upshape \cite{BP-99}}. The kernel $L(x,t)$ is defined as 
  \begin{equation}
    L(x,t) = {\bf 1}_{[0,x]}(t)\left(\int_0^1 F(s) ds - F(t) \right) + 
    x\left(F(t) - \int_0^1 F(s) ds\right) {\bf 
    1}_{[0,1]}(t).
    \label{eq:Ldef}
\end{equation}
Here and below, ${\bf 1}$ is the indicator function and $F(t)=\int_0^t f(s)ds$.
\end{theorem}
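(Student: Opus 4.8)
The plan is to exploit the explicit solvability of the one-dimensional problem. Integrating \eqref{eq:rode} once gives $a_\eps u_\eps'=C_\eps-F$ with $F(x)=\int_0^x f$, so that $u_\eps$ and its flux are known in closed form, the constant $C_\eps=(\int_0^1 a_\eps^{-1}F)/(\int_0^1 a_\eps^{-1})$ being fixed by $u_\eps(1)=0$; the same holds for $u_0$ with $a_\eps$ replaced by $\astar$ and $C_0=\int_0^1 F$. The first observation is that \emph{both schemes are nodally exact}. Writing $w=u_\eps-I_h u_\eps$ for the difference between $u_\eps$ and its MsFEM interpolant $I_h u_\eps\in V^h_\eps$, $w$ vanishes at every node, and on each cell $I_k=(x_{k-1},x_k)$ the basis functions satisfy $a_\eps(\phi^j_\eps)'\equiv\mathrm{const}$, whence $\int_{I_k}a_\eps w'(\phi^j_\eps)'=\mathrm{const}\cdot\int_{I_k}w'=0$; summing over cells gives $A_\eps(w,\phi^j_\eps)=0$, so Galerkin orthogonality forces $u^h_\eps=I_h u_\eps$, i.e. $u^h_\eps(x_j)=u_\eps(x_j)$. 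The identical argument (constant coefficient, hat functions) gives $u^h_0(x_j)=u_0(x_j)$. Consequently the error $u^h_\eps-u_\eps$ solves $-(a_\eps e')'=f$ on each cell with zero data, and the local Poincar\'e inequality (constant $h/\pi$ on an interval of length $h$) together with uniform ellipticity yields \eqref{eq:fem} with the stated constants.

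Nodal exactness also settles the finite-dimensional distributions at nodes, since $u^h_\eps(x_j)-u^h_0(x_j)=u_\eps(x_j)-u_0(x_j)$ is exactly the continuous corrector at $x_j$, whose law is governed by \cite{BP-99}. The content therefore lies in the within-cell behaviour. Using $\phi^{k-1}_\eps+\phi^k_\eps\equiv1$ on $I_k$ and writing $r_j:=u_\eps(x_j)-u_0(x_j)$, a short computation gives, for $x=x_{k-1}+s\in I_k$,
\begin{equation*}
u^h_\eps(x)-u^h_0(x)=r_{k-1}+\big(u_0(x_k)-u_0(x_{k-1})\big)\Big(\phi^k_\eps(x)-\tfrac{s}{h}\Big)+(r_k-r_{k-1})\,\phi^k_\eps(x).
\end{equation*}
The crucial point is the factor $u_0(x_k)-u_0(x_{k-1})=\tfrac1{\astar}\int_{I_k}(C_0-F)=O(h)$ multiplying the basis fluctuation $\phi^k_\eps-s/h=O(\sqrt{\eps}/\sqrt{h})$: the constant parts of the two multiscale functions cancel, so the middle term is $O(\sqrt{\eps h})$ rather than $O(\sqrt{\eps}/\sqrt{h})$, and likewise $(r_k-r_{k-1})\phi^k_\eps=O(\sqrt{\eps h})$. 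Thus $u^h_\eps-u^h_0$ is, uniformly in $x$, a bounded collection of oscillatory integrals of $q_\eps(x):=q(x/\eps)$ of size $O(\sqrt{\eps})$; estimating their second moments through $R$ and the elementary bound $\Var(\int_I q_\eps\,dx)\le\eps\|R\|_{1,\R}|I|$ gives \eqref{eq:supL2}.

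For part (ii) I would linearise the ratios defining $\phi^k_\eps$ and $C_\eps$ in $q_\eps$ (the discarded remainders are products of two $O(\sqrt{\eps})$ oscillatory integrals, hence $o(\sqrt{\eps})$ after rescaling), which expresses $\eps^{-1/2}(u^h_\eps-u^h_0)(x)$ as $\eps^{-1/2}\int_0^1 L^h(x,t)q_\eps(t)\,dt$ plus negligible terms, with $L^h(x,t)$ the kernel of \eqref{eq:msfemL}: on $I_k$ it is the linear interpolation of $L(x_{k-1},\cdot),L(x_k,\cdot)$ plus the localized basis-difference kernel $(u_0(x_k)-u_0(x_{k-1}))\tfrac{\astar}{h}(\mathbf{1}_{[x_{k-1},x]}-\tfrac{s}{h}\mathbf{1}_{I_k})$. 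Convergence of the finite-dimensional distributions to the Gaussian vector with covariance $\sigma^2\int_0^1 L^h(x,t)L^h(x',t)\,dt$ then follows from the classical central limit theorem for oscillatory integrals of $\rho$-mixing processes under {\upshape (S3)} (as in \cite{BP-99}), applied jointly to the finitely many deterministic test functions involved. The remaining, and I expect hardest, step is tightness in $\Ct([0,1])$: I would verify Kolmogorov's criterion through a fourth-moment bound $\E|\,\eps^{-1/2}(u^h_\eps-u^h_0)(x)-\eps^{-1/2}(u^h_\eps-u^h_0)(x')\,|^4\le C|x-x'|^{1+\beta}$ uniform in $\eps$, which is exactly where the fourth-order moment estimates on $q_\eps$ postponed to the appendix are needed, and where continuity across cell interfaces (where the basis-difference kernel vanishes) must be checked.

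Finally, part (iii) is a deterministic statement about Gaussian processes: it suffices that $L^h(x,\cdot)\to L(x,\cdot)$ in $L^2(0,1)$, equicontinuously in $x$. The interpolation part converges to $L(x,\cdot)$ because $x\mapsto L(x,\cdot)$ is continuous into $L^2$, while the basis-difference kernel has squared $L^2(dt)$-norm of order $(u_0(x_k)-u_0(x_{k-1}))^2 h^{-1}=O(h)\to0$; hence the covariances converge, giving convergence of the finite-dimensional distributions of $\U^h$ to those of $\U$. Combined with a tightness bound uniform in $h$ (again via fourth moments) this yields $\U^h\to\U$ in $\Ct([0,1])$, the limit being the process of \cite{BP-99} associated with the kernel $L$ of \eqref{eq:Ldef}. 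The overall main obstacle is thus the uniform (in $\eps$, and then in $h$) tightness via fourth-order moment control, the bookkeeping of the explicit kernel $L^h$ being routine once the cancellation of the $O(1)$ constant parts of the basis functions is observed.
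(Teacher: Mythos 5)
Your proposal is correct and takes a genuinely different route from the paper's. The paper treats MsFEM as an instance of its general framework {\upshape (N1)--(N3)}: it subtracts the two stiffness systems, inverts with the discrete Green's function $G^h_0$, and derives the decomposition $u^h_\eps-u^h_0=\int_0^1L^h(x,t)q_\eps(t)\,dt+r^h_\eps$ through Propositions \ref{prop:msfemell2}--\ref{prop:msfemL}, then obtains (ii) and (iii) from Propositions \ref{prop:wcone} and \ref{prop:wctwo}. You instead exploit what is special to MsFEM in one dimension: nodal exactness (your Galerkin-orthogonality argument is the same super-convergence observation as the paper's $Pu_\eps=u^h_\eps$ projection, and your proof of \eqref{eq:fem} via the local error equation and the Poincar\'e--Friedrichs constant $h/\pi$ coincides with the paper's), together with the closed-form 1-D solution, which lets you write the corrector cell-by-cell as the interpolation of the \emph{continuous} corrector at the nodes plus the explicit fluctuation $(u_0(x_k)-u_0(x_{k-1}))(\tilde{\phi}^k_\eps-s/h)$. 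Your identification of the kernel is exact: since $U^0_k=u_0(x_k)$ and $G^h_0=G_0$ by super-convergence, the bracket in \eqref{eq:msfemL} collapses to $C_0-F(t)$ (using $\int_{I_k}f\tilde{\phi}^k_0=F(x_k)-h^{-1}\int_{I_k}F$), so $L^h_1(x,\cdot)$ is precisely the piecewise-linear-in-$x$ interpolation of $L(x_j,\cdot)$, matching your formula. Your route buys brevity, a sharper version of \eqref{eq:supL2} (in fact $O(\eps)$ rather than $O(\eps/h^2)$, since both cell terms are $O(\sqrt{\eps h})$ in RMS), and in (iii) an almost free uniform-in-$h$ modulus for the interpolation part: within a cell, $\|L^h_1(x,\cdot)-L^h_1(y,\cdot)\|_2\le C|x-y|h^{-1/2}\le C|x-y|^{1/2}$ because $|x-y|\le h$, replacing the paper's case analysis \eqref{eq:Uh1fmt2}--\eqref{eq:Uh1fmt5}. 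What the paper's heavier machinery buys is reusability: the same propositions handle HMM, where $\tilde{\phi}^j_\eps=\tilde{\phi}^j_0$ and nodal exactness fails, so your argument would not transfer. The only thin spots are the ones you flag yourself: the tightness estimates for the localized kernel $L^h_2$ (in (ii) via the fourth-moment Lemma \ref{lem:fmtint} around \eqref{eq:jek}, and in (iii) the bound $\|L^h_2(x,\cdot)-L^h_2(y,\cdot)\|_2^2\le C|x-y|$ uniform in $h$) still require the paper's three-case splitting, since the naive Lipschitz constant of $L^h_2$ in $x$ is $O(1/h)$ and your $O(h)$ bound on its $L^2$ norm yields covariance convergence but not by itself an $|x-y|$-modulus; these are routine verifications along the paper's lines, not structural gaps.
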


Equivalently, the theorem says the diagram in \eqref{eq:cd} commutes
when $q$ has SRC and that MsFEM passes the corrector test in this setting.

To prove (iv) of the diagram, we recast $L(x,t)$ as
 \begin{equation}
   L(x,t) = \astar \frac{\partial}{\partial y} G_0(x,t) \cdot \astar 
   \frac{\partial}{\partial x} u_0(t).
   \label{eq:Ldef2}
 \end{equation}
 Here $G_0$ is the Green's function of \eqref{eq:hode}. It has the 
 following expression:
 \begin{equation}
   G_0(x,y) = \left\{
   \begin{aligned}
     \astar^{-1}x(1-y), & &x \le y,\\
     \astar^{-1}(1-x)y, & &x > y.
   \end{aligned}
   \right.
   \label{eq:green}
 \end{equation}
 In particular,$G_0$ is Lipschitz continuous in each 
variable while the other is kept fixed.
%\end{remark}

The next theorem accounts for MsFEM in media with LRC. We recall that the 
random process $q(x)$ below is constructed as a function of a Gaussian 
process.
\begin{theorem}
  \label{thm:msfeml}
  Let $u_\eps$, $u_0$, $u^h_\eps$ and $u^h_0$ be defined as in the previous 
theorem. Let $q(x,\omega)$ and $a(x,\omega)$ be constructed as in {\upshape 
(L1)-(L3)}. Then we have

{\upshape (i)} \begin{equation}
\label{eq:supL2l}
\sup_{x \in [0,1]} \left\lvert \E \left(u^h_\eps(x) - u^h_0(x)\right)^2 
\right \rvert \le C \frac{1}{h}\left(\frac{\eps}{h}\right)^{\alpha},
\end{equation}
for some constant $C$ depending on $(\lambda, \Lambda)$, $\kappa$, $\alpha$, and $f$.

{\upshape (ii)} As $\eps$ goes to zero while $h$ is fixed, we have
\begin{equation}
\frac{u^h_\eps(x) - u^h_0(x)}{\eps^{\frac \alpha 2}} \xrightarrow[\eps \to 
0]{\mathrm{distribution}}
 \U^h_H(x; W^H) : = \sigma_H \int_0^1 L^h (x,t) dW^H_t.
\label{eq:cltonel}
\end{equation}
Here $H = 1 - {\frac \alpha 2}$, and $W^H_t$ is the standard fractional 
Brownian motion with Hurst index $H$. The constant $\sigma_H$ is defined as 
$\sqrt{\kappa/H(2H-1)}$. The function $L^h(x,t)$ is defined as in the 
previous theorem.

{\upshape (iii)} As $h$ goes to zero, we have
\begin{equation}
\U^h_H(x; W^H) \xrightarrow[h \to 0]{\mathrm{distribution}} \U_H(x; W^H) : 
= \sigma_H \int_0^1 L(x,t) dW^H_t.
\label{eq:clttwol}
\end{equation}
\end{theorem}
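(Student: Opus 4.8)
The three assertions rest on a common starting point: the explicit representation of the discrete corrector $u^h_\eps - u^h_0$ derived for general schemes in Section \ref{sec:stra}. For each fixed $x$ this writes $u^h_\eps(x)-u^h_0(x)$ as a finite linear combination, over the $N=1/h$ mesh cells, of oscillatory integrals $\int_{I_k} q(y/\eps)\,\psi_k(y)\,dy$, with deterministic weights built from the Green's function $G_0$ and the finite-element data; all randomness enters through $q(\cdot/\eps)$. For part (i) I would insert this representation into the second moment, use stationarity to replace $\E\{q(y/\eps)q(z/\eps)\}$ by $R((y-z)/\eps)$, and feed in the long-range decay $R(\tau)\sim\kappa|\tau|^{-\alpha}$ coming from \eqref{eq:Rg}--\eqref{eq:kappa}. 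The substitution $\tau=(y-z)/\eps$ turns this into $\kappa\eps^\alpha|y-z|^{-\alpha}$ up to lower-order terms, and tracking the $h^{-1}$ prefactors produced by the stiffness matrix yields the announced scaling $\eps^\alpha h^{-(1+\alpha)}$, uniformly in $x$.

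For part (ii), fix $h$ so that only the $N$ cells are present, and first prove a central limit theorem for the random vector with components $\eps^{-\frac\alpha2}\int_{I_k} q(y/\eps)\psi_k(y)\,dy$. Since $q=\Phi(g)$ with $g$ Gaussian, (L3) forces mean zero, while the non-vanishing of the integral in \eqref{eq:kappa} makes the Hermite rank of $\Phi$ equal to one; this is exactly the regime in which the long-range machinery of \cite{BGMP-AA-08} delivers a \emph{jointly Gaussian} limit (rather than a Rosenblatt-type limit), with limiting covariance $\kappa\int\!\int \psi_i(s)\psi_j(t)|s-t|^{-\alpha}\,ds\,dt$. Recognizing this as the covariance of the fractional Wiener integrals $\int\psi_k\,dW^H$ for $2H-2=-\alpha$ and $\sigma_H^2H(2H-1)=\kappa$, I reassemble the components to identify the limit as $\sigma_H\int_0^1 L^h(x,t)\,dW^H_t$, with $L^h$ the kernel of \eqref{eq:msfemL}. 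Convergence of the finite-dimensional distributions in $x$ follows from the vector statement, and tightness in $\Ct$ from Gaussian moment bounds on the variance increments.

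Part (iii) is the genuinely new analytic step. Both $\U^h_H$ and $\U_H$ are centered Gaussian, so convergence of finite-dimensional distributions reduces to convergence of covariances, and the fractional Wiener isometry (valid for $H>\tfrac12$, i.e.\ $\alpha<1$) gives
\[
\E\big\{\U^h_H(x)\,\U^h_H(y)\big\}=\kappa\int_0^1\!\!\int_0^1 L^h(x,s)L^h(y,t)\,|s-t|^{-\alpha}\,ds\,dt,
\]
and likewise for $\U_H$ with $L^h$ replaced by $L$. The difficulty, absent in the short-range Wiener setting of Theorem \ref{thm:msfem}(iii) where the covariance collapses to the single integral $\sigma^2\int L^h(x,t)L^h(y,t)\,dt$, is the singular weight $|s-t|^{-\alpha}$. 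I would control it by the one-dimensional Hardy--Littlewood--Sobolev inequality: with $p=1/H=2/(2-\alpha)\in(1,2)$ one has $\big|\int\!\int f(s)g(t)|s-t|^{-\alpha}\,ds\,dt\big|\le C\|f\|_{p}\|g\|_{p}$. Splitting the covariance difference as $L^hL^h-LL=(L^h-L)L^h+L(L^h-L)$ and applying this inequality termwise, covariance convergence follows from $\|L^h(x,\cdot)-L(x,\cdot)\|_{p}\to0$ and the uniform boundedness of $\|L^h(x,\cdot)\|_p$. The former comes for free: the strong $L^2$ convergence of the explicit kernels $L^h(x,\cdot)\to L(x,\cdot)$ that underlies Theorem \ref{thm:msfem}(iii) implies $L^p$ convergence on the finite-measure interval $[0,1]$ because $p<2$.

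It remains to establish tightness uniformly in $h$. Again by the isometry and Hardy--Littlewood--Sobolev, $\E|\U^h_H(x)-\U^h_H(y)|^2\le C\,\|L^h(x,\cdot)-L^h(y,\cdot)\|_p^2$, and from the explicit form \eqref{eq:Ldef} of $L$ together with the uniform-in-$h$ structure of $L^h$, the indicator estimate $\|{\bf 1}_{[0,x]}-{\bf 1}_{[0,y]}\|_p=|x-y|^{1/p}=|x-y|^{H}$ yields $\E|\U^h_H(x)-\U^h_H(y)|^2\le C|x-y|^{2H}$ for all $h$. Since $2H>1$, Kolmogorov's criterion applies directly to the second moment (exploiting Gaussianity for higher moments if a cleaner exponent is wanted), giving tightness in $\Ct$ and hence, with the finite-dimensional convergence, the claimed convergence in distribution. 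I expect the main obstacle to be precisely the passage of the limit through the singular kernel $|s-t|^{-\alpha}$ and the uniform-in-$h$ regularity of $L^h$ behind the tightness bound; the Hardy--Littlewood--Sobolev inequality is the device that reduces both to $L^p$ statements about $L^h$ inherited from the short-range analysis.
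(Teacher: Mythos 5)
Your route is essentially the paper's. Part (i) is the content of Lemma \ref{lem:ointl} (replace $R$ by its asymptote $\kappa|\tau|^{-\alpha}$ and substitute $\tau=(t-s)/\eps$) fed through the corrector decomposition of Section \ref{sec:stra}; part (ii) is the reduction, via the linearized representation of Proposition \ref{prop:msfemL}(ii), to the long-range limit theorem $\eps^{-\alpha/2}\int_0^1 q_\eps(t)m(t)\,dt \to \sigma_H\int_0^1 m\,dW^H_t$ of \cite{BGMP-AA-08} --- your vector-of-cell-integrals formulation and the paper's Cram\'er--Wold reduction \eqref{eq:cltbg} are the same statement; part (iii) is the fractional isometry \eqref{eq:lito} plus dominated convergence for the covariances and Hardy--Littlewood--Sobolev for tightness. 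Your symmetric HLS pairing $\|f\|_p\|g\|_p$ with $p=1/H$ is a cosmetic variant of the paper's asymmetric pairing $\|f\|_{1}\|g\|_{1/(1-\alpha)}$ combined with the interpolated bound \eqref{eq:L1hLp}; both yield $\E\,\lvert\U^h_H(x)-\U^h_H(y)\rvert^2\le C|x-y|^{2-\alpha}$ and the same Kolmogorov exponents $\beta=2$, $\delta=1-\alpha$.

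One step is wrong as written: in part (ii) you justify tightness of the family indexed by $\eps$ ``from Gaussian moment bounds on the variance increments,'' but that family is \emph{not} Gaussian --- $q=\Phi(g)$ is a bounded nonlinear functional of a Gaussian field, and only the $\eps\to0$ limit is Gaussian, so Gaussian moment equivalences are unavailable before the limit. The repair is exactly what the paper does: Lemma \ref{lem:ointl} gives the second-moment bound \eqref{eq:koll}, namely $\E\bigl(\eps^{-\alpha/2}\int_x^y q_\eps\,dt\bigr)^2\le C|x-y|^{2-\alpha}$, and since $\alpha<1$ this already carries the Kolmogorov exponent $1+\delta$ with $\delta=1-\alpha>0$, so second moments of the (non-Gaussian) prelimit suffice --- in contrast with the SRC case, where the second moment only produces $|x-y|$ and fourth moments (Lemma \ref{lem:fmtint}) were needed. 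A smaller point in the same vein: your opening claim that the corrector is a combination of oscillatory integrals ``with deterministic weights'' is true only after the linearization of Proposition \ref{prop:msfemL}; the exact weights ($b^k_\eps$ and $D^-U^\eps_k$) are random, and the remainder $r^h_\eps$, of size $Ch^{-3/2}(\eps/h)^{\alpha}$, must be shown negligible after division by $\eps^{\alpha/2}$ (it is, since $\eps^{\alpha/2}h^{-3/2-\alpha}\to0$ for fixed $h$). You cite Section \ref{sec:stra}, so this machinery is at your disposal, but the step should be stated rather than absorbed into the word ``deterministic.''
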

%\begin{remark}
As before, this theorem says the diagram in \eqref{eq:cd} 
commutes in the current case. In particular, $\alpha < 1$, and the scaling 
is $\eps^{\frac \alpha 2}$. Thus MsFEM passes the corrector test for both 
SRC and LRC. The stochastic integrals in \eqref{eq:cltonel} and 
\eqref{eq:clttwol} have fractional Brownian motions as integrators. We give 
a short review of such integrals in the appendix. A good reference is 
\cite{VT-PTRF-00}.
%\end{remark}

The next theorem addresses the convergence properties of HMM.  
\begin{theorem}
  \label{thm:hmm}
  Let $u_\eps$ and $u_0$ be the solutions to \eqref{eq:rode} and 
  \eqref{eq:hode}, respectively. Let $u^{h,\delta}_\eps$ be the HMM solution and $u^h_0$ the standard finite element approximation of  $u_0$. 
  
  {\upshape (i)} Suppose that the random processes $a(x)$ and 
$q(x)$ satisfy {\upshape (S1)-(S3)}.  Then
  \begin{equation}
    \frac{u^{h,\delta}_\eps(x) - u^h_0(x)}{\sqrt{\eps}} \xrightarrow[\eps 
    \to 0]{\mathrm{distribution}}
    \U^{h,\delta}(x; W) \xrightarrow[h\to 
0]{\mathrm{distribution}}\sqrt{\frac{h}{\delta}}\ \U(x; W).
    \label{eq:clth}
  \end{equation}
  Here, $\U^{h,\delta}(x; W)$ is as in \eqref{eq:cltone} with $L^h$ 
replaced by $L^{h,\delta}(x,t)$, which is defined in \eqref{eq:hmmL} below.  
The process $\U(x;W)$ is as in \eqref{eq:clttwo}.

  {\upshape (ii)} Suppose instead that the random processes $a(x)$ 
and $q(x)$ satisfy {\upshape (L1)-(L3)}. Then
  \begin{equation}
    \frac{u^{h,\delta}_\eps(x) - u^h_0(x)}{\eps^{\frac \alpha 2}} 
    \xrightarrow[\eps \to 0]{\mathrm{distribution}}
    \U^{h,\delta}_H(x; W^H) \xrightarrow[h\to 
0]{\mathrm{distribution}}\U_H(x; W^H).
    \label{eq:clthl}
  \end{equation}
  Here, $\U^{h,\delta}_H(x; W^H)$ is as in \eqref{eq:cltonel} with $L^h$ 
replaced by $L^{h,\delta}$, and $\U_H(x;W^H)$ is as in \eqref{eq:clttwol}.
\end{theorem}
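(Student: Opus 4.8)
\emph{Reduction to the general framework.} The plan is to realize HMM as a special case of the abstract scheme of Sections~\ref{sec:stra}--\ref{sec:wctwo} and to isolate the single place where the averaging window $\delta$ enters. First I would write the HMM stiffness matrix explicitly: on the $k$th cell the form $A^{h,\delta}_\eps$ replaces the exact harmonic average of $a_\eps$ over the cell by the patch harmonic average $a^{h,\delta}_k=(\frac{1}{\delta}\int_{I^\delta_k}a_\eps^{-1})^{-1}$. Writing $Q_k=\frac{1}{\delta}\int_{I^\delta_k}q_\eps$ and linearizing, $a^{h,\delta}_k=\astar(1-\astar Q_k)+O(Q_k^2)$, so to leading order the random part of the stiffness matrix, and hence of $u^{h,\delta}_\eps-u^h_0$, is a fixed deterministic linear functional of the patch averages $\{Q_k\}$. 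This is exactly the structure required in Section~\ref{sec:stra}; the only change from MsFEM is that the full cell averages $\frac1h\int$ are replaced by the patch averages $Q_k$, and this dependence is recorded in the kernel $L^{h,\delta}$ of \eqref{eq:hmmL}, which equals the (geometric, $\eps$- and medium-independent) discrete Green's-function weight $c_k(x)\approx h\,L(x,t_k)$ divided by $\delta$ on each $I^\delta_k$ and vanishes off the patches. Coercivity of $A^{h,\delta}_\eps$ on $V^h_0$, needed for well-posedness, follows from uniform ellipticity of the $a^{h,\delta}_k$.

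\emph{The limit $\eps\to0$.} With this reduction, the first arrow in \eqref{eq:clth} and \eqref{eq:clthl} follows from the general results. In the SRC case I invoke the central limit theorem for oscillatory integrals under (S3): the patches are disjoint and $R$ is integrable, so $\{Q_k/\sqrt\eps\}$ converges jointly to independent centered Gaussians with variances $\Var(Q_k)/\eps\to\sigma^2/\delta$, producing the Wiener-integral process $\U^{h,\delta}(x;W)=\sigma\int_0^1 L^{h,\delta}\,dW_t$. In the LRC case I use the non-central limit theorem of \cite{BGMP-AA-08} under (L1)--(L3), where the Hermite-rank-one condition (L3) drives convergence to the fractional Wiener integral $\U^{h,\delta}_H(x;W^H)$; here the joint limiting covariances are $\mathrm{Cov}(Q_k,Q_{k'})/\eps^\alpha\to\kappa|t_k-t_{k'}|^{-\alpha}$ for $k\neq k'$ and $\to\kappa C_\alpha\delta^{-\alpha}$ for $k=k'$. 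Finite-dimensional distributions come from these theorems; tightness in $\Ct$ follows from the fourth-moment bounds of the appendix together with a priori estimates of the type \eqref{eq:supL2}, \eqref{eq:supL2l}.

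\emph{The limit $h\to0$ and the SRC/LRC dichotomy.} The real content is the second arrow, which reduces to the convergence of the covariance kernels, since for Gaussian (resp.\ fractional) integrals convergence of the processes follows from convergence of $\|L^{h,\delta}(x,\cdot)\|_2^2$ (resp.\ of the bilinear form $\iint L^{h,\delta}(x,s)L^{h,\delta}(x,t)|s-t|^{-\alpha}\,ds\,dt$) uniformly in $x$, plus equicontinuity. For SRC the variance is $\sigma^2\|L^{h,\delta}(x,\cdot)\|_2^2$; since $L^{h,\delta}\approx(h/\delta)L$ on patches of total relative length $\delta/h$, one gets $\|L^{h,\delta}(x,\cdot)\|_2^2\to\frac{h}{\delta}\|L(x,\cdot)\|_2^2$, whence the amplified limit $\sqrt{h/\delta}\,\U(x;W)$. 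For LRC I split the bilinear form into a different-patch (off-diagonal) part and a same-patch (diagonal) part. The off-diagonal part is a double Riemann sum converging to $\iint L(x,s)L(x,t)|s-t|^{-\alpha}\,ds\,dt$, i.e.\ the variance of $\U_H(x;W^H)$, and it does not involve $\delta$; the diagonal part carries all the $\delta$-dependence but is of size $O(h\,\delta^{-\alpha})$ and vanishes as $h\to0$ because $\alpha<1$. This asymmetry is the mechanism behind the theorem: under SRC the between-patch covariances are summable and the only surviving term is the amplified diagonal, whereas the slowly decaying $|t_k-t_{k'}|^{-\alpha}$ couplings make the between-patch interaction dominant under LRC, so the within-patch scale $\delta$ washes out and no amplification occurs.

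\emph{Main obstacle.} The hard part will be the LRC diagonal estimate: proving that the singular same-patch contribution $\sum_k\iint_{I^\delta_k\times I^\delta_k}L^{h,\delta}L^{h,\delta}|s-t|^{-\alpha}$ is genuinely of lower order, uniformly in $x\in[0,1]$, throughout the admissible range of $\delta$ rather than merely for a fixed ratio $\delta/h$. This rests on the bound $\iint_{[0,\delta]^2}|s-t|^{-\alpha}\,ds\,dt=C_\alpha\delta^{2-\alpha}$ with $\alpha<1$ keeping the singular integral finite, combined with $c_k\approx hL(x,t_k)$, and it must be coupled with the tightness estimate so that pointwise covariance convergence is upgraded to convergence in distribution in $\Ct$.
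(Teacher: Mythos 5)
Your proposal is correct and follows essentially the same route as the paper: HMM is cast into the general (N1)--(N3) framework with the difference $b^k_\eps-b^k_0$ carrying the $h/\delta$ factor (yielding the kernel $L^{h,\delta}$ of \eqref{eq:hmmL}), the $\eps\to0$ limits follow from the general propositions of sections \ref{sec:stra}--\ref{sec:wctwo}, and the $h\to0$ dichotomy is read off the covariance kernels, with the LRC bilinear form split exactly as in the paper into an off-diagonal part converging (as a second-difference/Riemann-sum approximation) to the continuum kernel $\kappa|t-s|^{-\alpha}$ and a diagonal part of size $O(h\delta^{-\alpha})$. The obstacle you flag matches the paper's own caveat that the diagonal term is negligible only when $\delta\gg h^{1/\alpha}$ (automatic at fixed ratio $\delta/h$ since $\alpha<1$).
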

%\begin{remark}

HMM is computationally less expensive than MsFEM when $\delta$ is much smaller 
than $h$. However, the theorem implies that this advantage comes at a 
price: when the random process $q(x)$ has SRC, HMM with $\delta<h$ amplifies the variance 
of the corrector. We will discuss methods to eliminate this effect in section 
\ref{sec:disc}. In the case of LRC, however, HMM does 
pass the corrector test even when $\delta\ll h$.

Intuitively, averaging occurs at the small scale $\delta\ll h$ for SRC.  
Since HMM performs calculations on a small fraction of each interval $h$, 
each integral needs to be rescaled by $h/\delta$ to capture the correct 
mean, which over-amplifies the size of the fluctuations. In the case of 
long-range correlations, the self-similar structure of the limiting process 
shows that the convergence to the Gaussian process occurs simultaneously at 
all scales (larger than $\eps$) and hence at the macroscopic scale. HMM may 
then be seen as a collocation method (with grid size $h$), which does 
capture the main features of the random integrals.

The amplification of the random fluctuations might be rescaled to provide the correct answer. The main difficulty is that the rescaling factor depends on the structure of the random medium and thus requires prior information or additional estimations about the medium. For general random media with no clear scale separation or no stationarity assumptions, the definition of such a rescaling coefficient might be difficult. In section \ref{sec:disc}, we present a hybrid method between HMM and MsFEM that is computationally less expensive than the MsFEM presented above while still passing the corrector test.
%\end{remark}

%%%%%%%%%%%%
%%%%%%%%%%%%
\section{Expression for the corrector and convergence as $\eps\to0$}
\label{sec:stra}

The starting point to prove the main theorems is to derive a formula for the corrector $u^h_\eps - u^h_0$. This is done by inverting the discrete systems yielded from the multi-scale schemes. The goal is to write the corrector as a stochastic integral linear in the random coefficient, plus error terms that are negligible in the limit; see Proposition \ref{prop:msfemL} below.
%%%%%%%%%%%%
\subsection{General finite element based multi-scale schemes}

Almost all finite element based multi-scale schemes for \eqref{eq:rode} have the same main
premise: in the weak formulation \eqref{eq:wfmsfem}, we approximate $H^1_0$ by a finite dimensional space and if necessary, also approximate the bilinear form.

To describe the choices of the finite spaces, we choose a uniform partition of the unit interval into $N$ sub-intervals with size $h=1/N$. Let $x_k$ denote the $k$th grid point, with $x_0 = 0$ and $x_N = 1$, and $I_k$ the interval $(x_{k-1},x_k)$. To simplify notation in the general setting, we still denote by $V^h_\eps$ the finite space and by $\{\phi^j_\eps\}_{j=1}^{N-1}$ the basis functions. They do not necessarily coincide with those in MsFEM.

In a general scheme, the bilinear form in \eqref{eq:wfmsfem} might be modified. Nevertheless, to
simplify notations, we still denote it as $A_\eps$. The solution obtained from the scheme is then $u^h_\eps \in V^h_\eps$ such that
\begin{equation}
A_\eps(u^h_\eps, v) = F(v), \quad \forall v \in V^h_\eps.
\end{equation}
Since $V^h_\eps$ is finite dimensional, the above condition amounts to a linear
system, which is obtained by putting $u^h_\eps = U^\eps_j\phi^j_\eps$, and by
requiring the above equation to hold for all basis functions. The linear system is:
\begin{equation}
A^h_\eps U^\eps = F^\eps.
\label{eq:msfeml}
\end{equation}
Here, the vector $U^\eps$ is a vector in $\R^{N-1}$, and it has entries $U^\eps_j$. The load vector $F^\eps$ is also in $\R^{N-1}$ and has entries $F(\phi^j_\eps)$. The stiffness matrix $A^h_\eps$ is an $N-1$ by $N-1$ matrix, and its entries are $A_\eps(\phi^i_\eps, \phi^j_\eps)$. Our main assumptions on the basis functions and the stiffness matrix are the following.

\begin{enumerate}
\item[(N1)] For any $1\le j \le N-1$, the basis function $\phi^j_\eps$
is supported on $I_j\cup I_{j+1}$, and it takes the value $\delta^i_j$ at 
nodal points $\{x_i\}$. Here $\delta^i_j$ is the Kronecker symbol.

\item[(N2)] The matrix $A^h_\eps$ is symmetric and tri-diagonal.  In 
addition, we assume that there exists a vector $b_\eps \in \R^N$ with 
entries
$\{b^k_\eps\}_{k=1}^N$, so that $A^h_{\eps i i+1} =  - b^{i+1}_\eps$ for 
any $i = 1, \cdots, N-2$
and
\begin{equation}
A^h_{\eps i i } = - (A^h_{\eps i i -1} + A^h_{\eps i i+1}), \quad i = 1, 
\cdots, N-1.
\label{eq:Ansum}
\end{equation}
In other words, the $i$th diagonal entry of $A^h_\eps$ is the negative sum 
of its neighbors in each row. Here, $A^h_{\eps 01}$ and $A^h_{\eps N-1 N}$ 
are not matrix elements and are set to be $b^1_\eps$ and $b^N_\eps$, 
respectively.

\item[(N3)] On each interval $I_j$ for $j=1, \cdots, N$, the only
two basis functions that are nonzero are $\phi^j_\eps$ and 
$\phi^{j-1}_\eps$, and they sum to one, i.e., $\phi^j_\eps + 
\phi^{j-1}_\eps = 1$. Equivalently, we have
\begin{equation}
\phi^j_\eps(x) = {\bf 1}_{I_j} \tilde{\phi}^j_\eps(x) + {\bf 
1}_{I_{j+1}}(x)[1-\tilde{\phi}^{j+1}(x)],
\label{eq:msfembdec}
\end{equation}
for some functions $\{\tilde{\phi}^k_\eps(x)\}_{k=1}^N$, each of them 
defined only on $I_j$ with boundary value 0 at the left end point and 1 at 
the right.
\end{enumerate}

As we shall see for MsFEM, (N3) implies (N2) when the bilinear form is 
symmetric. The special tri-diagonal structure of $A^h_\eps$ in (N2) 
simplifies the calculation of its action on a vector. Let $U$ be any vector 
in $\R^{N-1}$, we have
\begin{equation}
(A^h_\eps U)_i = - D^+( b^i_\eps D^- U_i), \quad\quad i = 1, \cdots, N - 1.
\label{eq:conserve}
\end{equation}
Here, the symbol $D^-$ denotes the backward difference operator, which is defined, together with the forward difference operator $D^+$, as
\begin{equation}
(D^- U)_k = U_k - U_{k-1}, \quad (D^+ U)_k = U_{k+1} - U_k.
\label{eq:fbdo}
\end{equation}
The equality \eqref{eq:conserve} is easy to check, and to make sense of the case when
$i$ equals $1$ or $N$, we need to extend the definition of $U$ by setting $U_0$ and
$U_N$ to zero. This formula has been used, for example, in \cite{HWC-MC-99}. It
is a very useful tool in the forthcoming computations.

\subsection{The expression of the corrector}

Now we derive an expression of the corrector, i.e., the difference between 
$u^h_\eps$, the solution to \eqref{eq:rode} obtained from the above scheme, 
and $u^h_0$, the standard FEM solution to \eqref{eq:hode}. 

The function $u^h_0(x)$ is obtained from a weak formulation similar to \eqref{eq:wfmsfem} with $a_\eps$ replaced by $\astar$, and $H_0^1$ replaced by $V^h_0$, the space spanned by hat functions $\{\phi^j_0\}$. Clearly, these basis functions satisfy (N1) and (N3). Let $A^h_0$ denote the associated stiffness matrix; then one can verify that it satisfies (N2). In fact, the vector $b$ is given by $b^k_0 = \astar/h$. Now $u^h_0(x)$ is simply $U^0_j \phi^j_0$, where $U^0$ solves
\begin{equation} A^h_0 U^0 = F^0.
\label{eq:feml}
\end{equation}
Subtracting this equation from \eqref{eq:msfeml}, we obtain:
$$
A^h_0(U^\eps - U^0) = (F^\eps - F^0) - (A^h_\eps - A^h_0)U^\eps.
$$
Let $G^h_0$ denote the inverse of the matrix $A^h_0$. We have
$$
U^\eps - U^0 = G^h_0(F^\eps - F^0) - G^h_0(A^h_\eps - A^h_0)U^\eps.
$$
Since both $A^h_\eps$ and $A^h_0$ satisfy (N2), the difference $A^h_\eps - A^h_0$ acts on vectors in the same manner as in \eqref{eq:conserve}. Since both $\{\phi^j_\eps\}$ and $\{\phi^j_0\}$ satisfy (N3), we verify that
$$
(F^\eps - F^0)_j = - D^+ (\tilde{F}^\eps_j - \tilde{F}^0_j), \quad\quad
\tilde{F}^\eps_j := \int_{I_j} f(t) \tilde{\phi}^j_\eps(t) dt.
$$
Using these difference forms, we have
\begin{equation}
\begin{aligned}
\label{eq:dcrtex}
U^\eps_j - U^0_j & = - \sum_{m = 1}^{N-1} (G^h_0)_{jm} 
\left(D^+(\tilde{F}^\eps - \tilde{F}^0)_m  - D^+\left( (b^m_\eps - b^m_0) 
D^- U^\eps_m \right) \right) \\
& = \sum_{k  = 1}^{N} D^- G^h_{0 j k} \left( (\tilde{F}^\eps - 
\tilde{F}^0)_k - (b^k_\eps - b^k_0) D^- U^\eps_k \right).
\end{aligned}
\end{equation}
The second equality is obtained from summation by parts. Note that we have
extended the definitions of $U^\eps$ and $U^0$ so that they equal zero when
the index is $0$ or $N$. Similarly, $(G^h_0)_{j0}$ and $(G^h_0)_{jN}$ are
zero as well.

The vector $U^\eps - U^0$ is the corrector evaluated at the nodal points. We have the following control of its $\ell^2$ norm under some assumptions on the statistics of $\{b^k_\eps\}$ and $\{\phi^j_\eps\}$.
\begin{proposition}
\label{prop:msfemell2}
Let $U^\eps$ and $U^0$ be as above. Let the basis functions $\{\phi^j_\eps\}$
and the stiffness matrix $A^h_\eps$ satisfy {\upshape (N1)-(N3)}. Suppose also
that
\begin{equation}
\sup_{1 \le k \le N} |D^- U^\eps_k| \le C \|f\|_{2} h^{\frac 1 2},
\label{eq:holder}
\end{equation}
for some universal constant $C$.

{\upshape (i)} Suppose further
that for any $k = 1, \cdots, N$, and any $x \in I_k$, we have
\begin{equation}
\label{eq:phibdd}
\E \left(\tilde{\phi}^k_\eps(x) - \tilde{\phi}^k_0(x)\right)^2 \le C 
\frac{\eps}{h}\|R\|_{1,\R},
\end{equation}
and
\begin{equation}
\label{eq:bbdd}
\E \left(b^k_\eps - b^k_0 \right)^2 \le C \frac{\eps}{h^3}\|R\|_{1,\R},
\end{equation}
for some  universal constant $C$. Then we have
\begin{equation}
\label{eq:msfemell2}
\E \left\| U^\eps - U^0 \right\|_{\ell^2}^2 \le C 
\frac{\eps}{h^3}\|R\|_{1,\R}(1 + \|f\|_2),
\end{equation}
for some universal $C$.

{\upshape (ii)} Suppose instead that the right hand side of \eqref{eq:phibdd} is 
$C\left(\frac{\eps}{h}\right)^{\alpha}$, and the right hand side of \eqref{eq:bbdd}
is $C\frac{1}{h^2}\left(\frac{\eps}{h}\right)^{\alpha}$. Then the estimate 
in \eqref{eq:msfemell2} should be changed to
$C\frac{1}{h^2} \left(\frac{\eps}{h}\right)^{\alpha}$.
\end{proposition}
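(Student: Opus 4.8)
The plan is to read the identity \eqref{eq:dcrtex} as the action of a fixed deterministic matrix on a random vector. I would write $U^\eps_j - U^0_j = \sum_{k=1}^N K_{jk} V_k$, where $K_{jk} := D^- G^h_{0jk}$ is the backward difference (in the second index) of the discrete Green's matrix and $V_k := (\tilde F^\eps - \tilde F^0)_k - (b^k_\eps - b^k_0) D^- U^\eps_k$ collects all the randomness. The estimate then splits cleanly into (a) a deterministic operator bound $\|K\|_{\ell^2\to\ell^2}\le C$ that is a property of the homogenized stiffness matrix alone, and (b) a variance bound on $\sum_k \E V_k^2$ that feeds in the hypotheses \eqref{eq:phibdd}, \eqref{eq:bbdd} and \eqref{eq:holder}. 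Since $K$ is deterministic, $\|KV\|_{\ell^2}^2 \le \|K\|_{\ell^2\to\ell^2}^2 \|V\|_{\ell^2}^2$ pathwise, so after taking expectations I obtain $\E\|U^\eps-U^0\|_{\ell^2}^2 \le C\sum_k \E V_k^2$, from which both (i) and (ii) follow by substituting the respective exponents.

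For step (a) I would use the explicit inverse of $A^h_0 = (\astar/h)T$, where $T$ is the standard tri-diagonal second-difference matrix. In one dimension the entries of $G^h_0 = (A^h_0)^{-1}$ can be computed explicitly and coincide with the continuous Green's function $G_0(x_j,x_k)$ of \eqref{eq:green} at the nodal points; since $G_0$ is Lipschitz in each variable with constant at most $\astar^{-1}\le\lambda^{-1}$, the difference satisfies the pointwise bound $|K_{jk}| = |D^- G^h_{0jk}| \le \lambda^{-1} h$. Because there are $N = 1/h$ rows and columns, both $\sup_j \sum_k |K_{jk}|$ and $\sup_k \sum_j |K_{jk}|$ are bounded by a universal constant, and the discrete Schur test (equivalently, two applications of Cauchy--Schwarz) gives $\|K\|_{\ell^2\to\ell^2}\le C$ and hence $\|U^\eps - U^0\|_{\ell^2}^2 \le C\|V\|_{\ell^2}^2$.

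For step (b) I would split $\E V_k^2 \le 2\E(\tilde F^\eps - \tilde F^0)_k^2 + 2\E[(b^k_\eps - b^k_0)^2 (D^- U^\eps_k)^2]$. For the first term, Cauchy--Schwarz on $I_k$ gives $(\tilde F^\eps - \tilde F^0)_k^2 \le (\int_{I_k} f^2)\int_{I_k}(\tilde\phi^k_\eps-\tilde\phi^k_0)^2\,dt$; taking expectations, inserting \eqref{eq:phibdd}, and summing over $k$ produces a contribution of order $\eps\|R\|_{1,\R}\|f\|_2^2$. For the second term the key device is that \eqref{eq:holder} is an almost-sure bound, so $D^- U^\eps_k$ can be pulled out as a deterministic factor of size $C\|f\|_2 h^{1/2}$; together with \eqref{eq:bbdd} and the $N=1/h$ terms in the sum this yields the dominant contribution of order $\tfrac{\eps}{h^3}\|R\|_{1,\R}\|f\|_2^2$. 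Adding the two and absorbing lower-order terms gives \eqref{eq:msfemell2}. Part (ii) is then identical line by line, with $\eps/h$ replaced by $(\eps/h)^\alpha$ in the first estimate and $\eps/h^3$ replaced by $h^{-2}(\eps/h)^\alpha$ in the second, propagating to the stated bound $C\,h^{-2}(\eps/h)^\alpha$.

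The only genuinely delicate point is the decoupling in step (b): because $U^\eps$ solves the full random system, $D^- U^\eps_k$ is correlated with $b^k_\eps - b^k_0$, and one cannot simply multiply their second moments. Invoking the deterministic a priori bound \eqref{eq:holder} sidesteps this entirely, at the cost of the $h^{1/2}$ factor that is precisely what turns the final exponent into $\eps/h^3$ rather than $\eps/h^2$. Everything else — the Green's-matrix Lipschitz estimate and the Schur test — is routine once the $1/h$ counting of grid points is tracked carefully.
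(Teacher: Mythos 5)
Your proposal is correct and takes essentially the same route as the paper's proof: both start from the identity \eqref{eq:dcrtex}, rely on the bound $|D^- G^h_{0jk}| \le Ch$, use the pathwise estimate \eqref{eq:holder} to decouple $D^- U^\eps_k$ from the random factor $b^k_\eps - b^k_0$ (the correlation issue you correctly flag), and then feed \eqref{eq:phibdd}--\eqref{eq:bbdd} through Cauchy--Schwarz, with the $b$-term dominating at order $\eps/h^3$ (respectively $h^{-2}(\eps/h)^\alpha$ in case (ii)). The only differences are organizational: you package the $\ell^2$ estimate as a Schur test on the matrix $K_{jk}=D^- G^h_{0jk}$ where the paper bounds $\E|U^\eps_j - U^0_j|^2$ uniformly in $j$ and sums over $j$, and you justify the key Green's-matrix bound by explicitly inverting the tridiagonal matrix $A^h_0$ (recovering $G^h_{0jk}=G_0(x_j,x_k)$ directly), where the paper instead cites the super-convergence result it proves in section \ref{sec:msfem1}.
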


%\begin{remark}
The assumption \eqref{eq:holder} 
essentially says that $u^h_\eps$ should
have a H\"{o}lder regularity. Suppose the weak formulation associated
to the multiscale scheme admits a unique solution $u^h_\eps$ such that 
$\|u^h_\eps\|_{H^1} \le C\|f\|_{2}$. Then by Morrey's inequality
\cite[p.266]{Evans}, $u^h_\eps \in C^{0,{\frac 1 2}}$ in one dimension.
Consequently, \eqref{eq:holder} holds.

For MsFEM, we have a better estimate: $|D^- U^\eps_k| \le C h$ due to a
super-convergence result; see \eqref{eq:holderplus}. Therefore, the 
estimate in \eqref{eq:msfemell2}
can be improved to be $C\frac{\eps}{h^2}$ in case (i) and
$C\frac{1}{h}\left(\frac{\eps}{h}\right)^{\alpha}$ in case (ii).

Item (i) of this proposition is useful when the random medium $a(x)$, or 
equivalently $q(x)$, has SRC, while item (ii) is useful in the case of LRC.  
The constant $C$ in the second item depends on $(\lambda, \Lambda)$, $f$ 
and $R_g$, but not on $h$.
%\end{remark}

\begin{proof}
To prove (i), we use a super-convergence result, which we prove in section \ref{sec:msfem1},  to get $|D^- G^h_{0jk}|\le Ch$.  Using this estimate 
together with \eqref{eq:holder} and \eqref{eq:dcrtex}, we have
$$
\E \left\lvert U^\eps_j - U^0_j\right\rvert^2 \le Ch \sum_{k = 1}^N 
\E\left\lvert \tilde{F}^\eps_k - \tilde{F}^0_k \right\rvert^2
+ Ch^2 \sum_{k = 1}^N \E\left\lvert b^k_\eps - b^k_0 \right\rvert^2.
$$
For the second term, we use \eqref{eq:bbdd} and obtain
\begin{equation}
\sum_{k = 1}^N \E\left\lvert b^k_\eps - b^k_0 \right\rvert^2 \le \sum_{k = 
1}^N  C \frac{\eps}{h^3}\|R\|_{1,\R} = C\frac{\eps}{h^4}\|R\|_{1,\R}.
\label{eq:bell2}
\end{equation}
For the other term, an application of Cauchy-Schwarz to the definition of 
$\tilde{F}^\eps$ yields
$$
\left\lvert \tilde{F}^\eps_k - \tilde{F}^0_k \right\rvert^2 \le 
\|f\|_{2, I_k}^2 \|\tilde{\phi}^k_\eps - \tilde{\phi}^k_0\|_{2, I_k}^2.
$$
Using \eqref{eq:phibdd}, we have
\begin{equation}
\label{eq:phiL2}
\E \|\tilde{\phi}^k_\eps - \tilde{\phi}^k_0\|_{2, I_k}^2 = \int_{I_k}
 \E \left(\tilde{\phi}^k_\eps - \tilde{\phi}^k_0\right)^2(x) dx \le 
C\frac{\eps}{h}\cdot h\|R\|_{1,\R} = C\eps\|R\|_{1,\R}.
\end{equation}
Therefore, we have
$$
\E \left\lvert U^\eps_j - U^0_j \right\rvert^2 \le \left(Ch \sum_{k = 1}^N 
\|f\|_{2, I_k} \eps + Ch^2 \frac{\eps}{h^4}\right)\|R\|_{1,\R} \le C 
\frac{\eps}{h^2} \|R\|_{1,\R}(1+\|f\|_2).
$$
Note that this estimate is uniform in $j$. Sum over $j$ to complete the 
proof of (i).

Proof of item (ii) follows in exactly the same way, using the corresponding 
estimates.
\end{proof}

Now, the corrector in this general multi-scale numerical scheme is:
\begin{equation}
\begin{aligned}
u^h_\eps(x) - u^h_0(x) & = U^\eps_j \phi^j_\eps(x) - U^0_j \phi^j_0(x) \\
& = (U^\eps - U^0)_j \phi^j_0(x) + U^0_j(\phi^j_\eps - \phi^j_0)(x) + 
(U^\eps - U^0)_j(\phi^j_\eps - \phi^j_0)(x).
\end{aligned}
\label{eq:crtex}
\end{equation}
We call the three terms on the right hand side $K_i(x)$, $i = 1, 2, 3$. Now
$K_1(x)$ is the piecewise interpolation of the corrector evaluated at the nodal
points; $K_2(x)$ is the corrector due to different choices of basis 
functions;
and $K_3(x)$ is much smaller due to the previous proposition and  
\eqref{eq:phibdd}. Our analysis shows that $K_1(x)$ and $K_2(x)$
contribute to the limit when $\eps\to0$ while $h$ is fixed, but only
a part of $K_1(x)$ contributes to the limit when $h\to0$.

Due to self-averaging effect which is made precise in Lemma \ref{lem:oint},
integrals of $q_\eps(x)$ are small. Therefore, our goal is to decompose the
above expression into two terms: a leading term which is an oscillatory integral
against $q_\eps$, and a remainder term which contains multiple oscillatory
integrals.
\begin{proposition}
\label{prop:msfemL}
Assume that $u^h_\eps$ is the solution to \eqref{eq:rode} obtained from a
multi-scale scheme, which satisfies {\upshape (N1)-(N3)} and has basis functions
$\{\phi^j_\eps\}$, and that $u^h_0$ is the solution of \eqref{eq:hode} obtained by
the standard FEM with hat basis functions $\{\phi^j_0\}$. Let $b_\eps$ and $b_0$
denote the vectors in {\upshape (N2)} of these methods. Suppose that 
\eqref{eq:holder} holds and that for any
$k = 1, \cdots, N$, we have
\begin{equation}
\begin{aligned}
\tilde{\phi}^k_\eps(t) - \tilde{\phi}^k_0(t) & = 
[1 + \tilde{r}_{1k} ] \frac{\astar}{h}\left(\int_{x_{k-1}}^t q_\eps(s) ds - 
\frac{t-x_{k-1}}{h}\int_{x_{k-1}}^{x_k} q_\eps(s) ds \right),\\
b^k_\eps - b^k_0  & = [1 + \tilde{r}_{2k}]\left( -\frac{\astar^2}{h^2} \int_{x_{k-1}}^{x_k}
q_\eps(t) dt\right),
\end{aligned}
\label{eq:diffphib}
\end{equation}
for some random variables $\tilde{r}_{1k}$ and $\tilde{r}_{2k}$. 

{\upshape (i)} Assume that $q(x)$ has SRC, i.e., satisfies
{\upshape (S2)}, and that
\begin{equation}
\sup_{1 \le k \le N} \max \{ \E |\tilde{r}_{1k}|^2 ,  \E |\tilde{r}_{2k}|^2 \}
\le C\frac{\eps}{h}\|R\|_{1,\R},
\label{eq:phibrbdd}
\end{equation}
for some universal constant $C$. Then, the corrector can be written as
\begin{equation}
u^h_\eps(x) - u^h_0(x) = \int_0^1 L^h(x,t) q_\eps(t) dt + r^h_\eps(x).
\label{eq:crtdec}
\end{equation}
Furthermore, the remainder $r^h_\eps(x)$ satisfies
\begin{equation}
\sup_{x \in [0,1]} \ \E|r^h_\eps(x)| \le C\frac{\eps}{h^{5/2}} 
\|R\|_{1,\R}(1 + \|f\|_2),
\label{eq:rbdd}
\end{equation}
for some universal constant $C$. The function $L^h(x,t)$ is the sum of 
$L^h_1$ and $L^h_2$ defined by:
\begin{equation}
\begin{aligned}
L^h_1(x, t) & = \sum_{k = 1}^N  {\bf 1}_{I_k}(t) \frac{\astar D^- G^h_0(x, 
x_k)}{h} \left(\frac{\astar D^- U^0_k}{h} + \int_t^{x_k} f(s) ds -  
\int_{x_{k-1}}^{x_k} f(s) \tilde{\phi}^k_0(s) ds\right),
\\
L^h_2(x, t) & =  \frac{\astar}{h} D^- U^h_{0 j(x)} \left({\bf 
1}_{[x_{j(x)-1}, x]}(t) - \frac{x-x_{j(x) - 1}}{h} {\bf 
1}_{[x_{j(x)-1},x_{j(x)}]}(t)\right).
\label{eq:msfemL}
\end{aligned}
\end{equation}
Given $x$, the index $j(x)$ is the unique one so that $x_{j(x)-1} < x \le x_{j(x)}$. The function $G^h_0(x, x_k)$ is defined as
\begin{equation}
G^h_0(x, x_k) = \sum_{j = 1}^{N-1} G^h_{0 j k} \phi^j_0(x).
\label{eq:Gdef}
\end{equation}
$G^h_0$ is the interpolation in $V^h_0$ using the discrete Green's function of standard FEM.

{\upshape (ii)} Assume that $q(x)$ has long range correlation, i.e., 
{\upshape (L1)-(L3)} are satisfied, and that the estimate in 
\eqref{eq:phibrbdd} is $C\left(\frac{\eps}{h}\right)^{\alpha}$. Then the 
same decomposition holds, the expression of $L^h(x,t)$ remains the same, 
but the estimate in \eqref{eq:rbdd} should be replaced by $C 
\frac{1}{h^{3/2}}\left(\frac{\eps}{h}\right)^\alpha$.
\end{proposition}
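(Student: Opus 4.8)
The plan is to begin from the three–term splitting $u^h_\eps-u^h_0=K_1+K_2+K_3$ in \eqref{eq:crtex} and, within $K_1$ and $K_2$, to extract the part that is \emph{linear} in $q_\eps$, dumping everything else into $r^h_\eps$. The target is that $K_2$ produces $\int_0^1 L^h_2(x,t)q_\eps(t)\,dt$, that $K_1$ produces $\int_0^1 L^h_1(x,t)q_\eps(t)\,dt$, and that $K_3$ is a pure remainder term. The one structural point requiring care sits in the $b$–term of \eqref{eq:dcrtex}: since $(b^k_\eps-b^k_0)D^-U^\eps_k$ is a product of two random factors, I would first write $D^-U^\eps_k=D^-U^0_k+D^-(U^\eps-U^0)_k$. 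The deterministic piece $D^-U^0_k$ keeps this contribution linear in $q_\eps$ and feeds into $L^h_1$, whereas the genuinely quadratic cross term $(b^k_\eps-b^k_0)D^-(U^\eps-U^0)_k$ is routed to the remainder.

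To assemble $L^h_2$ from $K_2$: on the interval $I_{j(x)}$ only the two basis functions $\phi^{j(x)-1}_\eps,\phi^{j(x)}_\eps$ are nonzero, and by \eqref{eq:msfembdec} they recombine to $D^-U^0_{j(x)}\,(\tilde\phi^{j(x)}_\eps-\tilde\phi^{j(x)}_0)(x)$; inserting the leading term of \eqref{eq:diffphib} gives exactly $\int_0^1 L^h_2(x,t)q_\eps(t)\,dt$, plus a remainder carrying the factor $\tilde r_{1,j(x)}$. To assemble $L^h_1$ from $K_1$: I insert \eqref{eq:dcrtex}, use $D^-G^h_0(x,x_k)=\sum_j\phi^j_0(x)\,D^-G^h_{0jk}$ from \eqref{eq:Gdef}, and substitute the leading parts of $(\tilde F^\eps-\tilde F^0)_k$ and of $(b^k_\eps-b^k_0)D^-U^0_k$. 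A Fubini exchange in the first converts $\int_{I_k}f(t)\int_{x_{k-1}}^t q_\eps$ into $\int_{x_{k-1}}^{x_k}q_\eps(s)\int_s^{x_k}f(t)\,dt\,ds$, and after collecting the two contributions the inner bracket becomes precisely
\[
\frac{\astar D^-U^0_k}{h}+\int_t^{x_k}f(s)\,ds-\int_{x_{k-1}}^{x_k}f(s)\tilde\phi^k_0(s)\,ds,
\]
which is the coefficient of $L^h_1$ in \eqref{eq:msfemL}.

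It then remains to bound $r^h_\eps$, which consists of five pieces: the $\tilde r_1$–correction coming from $K_2$; from $K_1$, the $\tilde r_1$–correction to $\tilde F$, the $\tilde r_2$–correction to the $b$–term, and the cross term; and all of $K_3$. Each is controlled by Cauchy--Schwarz together with the estimates already available: \eqref{eq:phibrbdd} for $\tilde r_{1k},\tilde r_{2k}$, \eqref{eq:bbdd} for $b^k_\eps-b^k_0$, the entrywise form $\E|(U^\eps-U^0)_k|^2\le C\frac{\eps}{h^2}\|R\|_{1,\R}(1+\|f\|_2)$ extracted from the proof of Proposition \ref{prop:msfemell2}, the super-convergence bounds $|D^-G^h_0(x,x_k)|\le Ch$ and $|D^-U^0_k|\le Ch$, and the self-averaging estimate $\E|\int_a^b q_\eps|^2\le C|b-a|\eps\|R\|_{1,\R}$ from Lemma \ref{lem:oint}. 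The two $\tilde r_1$–corrections turn out to be $O(\eps)$, the $\tilde r_2$–correction $O(\eps/h)$, and $K_3$ is $O(\eps/h^{3/2})$. The binding piece is the quadratic cross term: summing its $N=1/h$ contributions, each of size $Ch\,(\eps/h^3)^{1/2}(\eps/h^2)^{1/2}=C\eps/h^{3/2}$, yields $C\eps/h^{5/2}\|R\|_{1,\R}(1+\|f\|_2)$, exactly \eqref{eq:rbdd}.

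The main obstacle is precisely this cross term: being quadratic it cannot be folded into $L^h$, so its magnitude must be bounded directly, and it is what dictates the $\eps/h^{5/2}$ rate. Part (ii) is then identical line for line, replacing the SRC moment bounds by their LRC analogues — $\E|\tilde r|^2,\ \E(\tilde\phi^k_\eps-\tilde\phi^k_0)^2\le C(\eps/h)^\alpha$, $\E(b^k_\eps-b^k_0)^2\le Ch^{-2}(\eps/h)^\alpha$, $\E|(U^\eps-U^0)_k|^2\le Ch^{-1}(\eps/h)^\alpha$, and $\E|\int_a^b q_\eps|^2\le C(b-a)^2(\eps/(b-a))^\alpha$ — whereupon the same cross term gives $Ch^{-3/2}(\eps/h)^\alpha$, matching the stated remainder for the long-range case.
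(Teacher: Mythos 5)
Your proposal is correct and follows essentially the same route as the paper's proof: the same $K_1+K_2+K_3$ splitting of \eqref{eq:crtex}, the same substitution $D^-U^\eps_k = D^-U^0_k + D^-(U^\eps-U^0)_k$ with the quadratic cross term routed to (and dominating) the remainder at rate $C\eps/h^{5/2}$, and the same Fubini rearrangement producing the bracket in $L^h_1$. The only (immaterial) differences are that you bound the cross term by summing termwise via the uniform-in-$k$ entrywise estimate from the proof of Proposition \ref{prop:msfemell2}, where the paper applies Cauchy--Schwarz across the sum in $k$ using \eqref{eq:bell2} and \eqref{eq:msfemell2}, and your $K_3$ bound $O(\eps/h^{3/2})$ is slightly sharper than the paper's \eqref{eq:K3bdd}; both give the same final rates in (i) and (ii).
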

%\begin{remark}\label{rem:green}

Due to the super-convergent result in section \ref{sec:msfem1}, the function 
$G^h_0(x,x_k)$ above is exactly the Green's function evaluated at 
$(x,x_k)$. This can be seen from the facts that they agree at nodal points 
and both are piece-wise linear and continuous.
%\end{remark}
\begin{proof}
We only present the proof of item (i). Item (ii) follows in exactly the 
same way.  We point out that the assumption \eqref{eq:diffphib} and the
estimates \eqref{eq:phibrbdd} imply \eqref{eq:phibdd} and
\eqref{eq:bbdd} thanks to Lemma \ref{lem:oint}.

The idea is to extract the terms in the expression \eqref{eq:crtex} that 
are linear in $q_\eps$. For $K_1(x)$, we use \eqref{eq:dcrtex} and write
$$
\begin{aligned}
K_1(x) & \approx \sum_{j=1}^{N-1}\sum_{k=1}^{N} D^- G^h_{0 j k} 
(\tilde{F}^\eps_k - \tilde{F}^0_k - (b^k_\eps - b^k_0) D^- U^0_k) 
\phi^j_0(x)\\
& = \sum_{k=1}^N D^- G^h_0(x, x_k) (\tilde{F}^\eps_k - \tilde{F}^0_k - 
(b^k_\eps - b^k_0) D^- U^0_k).
\end{aligned}
$$
Note that the expression above is an approximation because we have changed 
$D^- U^\eps$ on the right hand side of \eqref{eq:dcrtex} to $D^- U^0$. The 
error is
\begin{equation}
r^h_{11}(x) = - \sum_{k=1}^N D^- G^h_0(x,x_k)(b^k_\eps - b^k_0)D^-(U^\eps - 
U^0)_k.
\label{eq:rh11}
\end{equation}
Estimating $|D^- G^h_0|$ by $Ch$ and using Cauchy-Schwarz on the sum over 
$k$ and \eqref{eq:bell2} and \eqref{eq:msfemell2}, we verify that 
$\E|r^h_{11}(x)| \le C\eps h^{-5/2}\|R\|_{1,\R}(1+\|f\|_2)$.

Using the expressions of $\tilde{\phi}_\eps$ and $b_\eps$, and the 
estimates of the higher order terms in them, \eqref{eq:diffphib}, we can further approximate 
$K_1(x)$ by
$$
\begin{aligned}
K_1(x) \approx \sum_{k=1}^N D^- G^h_0(x, x_k) & \left(\int_{x_{k-1}}^{x_k} 
f(t) \frac{\astar}{h}  \left[ \int_{x_{k-1}}^t q_\eps(s) ds -  
\frac{t-x_{k-1}}{h} \int_{x_{k-1}}^{x_k} q_\eps(s) ds\right]dt\right.
\\
+ & \left.\frac{\astar^2}{h^2}D^- U^0_k \int_{x_{k-1}}^{x_k} q_\eps(t) 
dt\right).
\end{aligned}
$$
The error in this approximation is:
\begin{equation}
\begin{aligned}
r^h_{12}(x) =  \sum_{k=1}^N D^- G^h_0(x, x_k) & \left(\tilde{r}_{1k}\int_{x_{k-1}}^{x_k} 
f(t) \frac{\astar}{h}  \left[ \int_{x_{k-1}}^t q_\eps(s) -  
\frac{t-x_{k-1}}{h} \int_{x_{k-1}}^{x_k} q_\eps(s)\right]dt\right.
\\
+ & \left. \tilde{r}_{2k} \frac{\astar^2}{h^2}D^- U^0_k \int_{x_{k-1}}^{x_k} q_\eps(t) 
dt\right).
\label{eq:rh12}
\end{aligned}
\end{equation}
Using Lemma \ref{lem:oint}, \eqref{eq:phibrbdd} and Cauchy-Schwarz, we have
$$
\E \Big|\tilde{r}_{1k} \int_{I_k} q_\eps(s) ds \Big| \le C \eps.
$$
Using this estimate, we verify that the mean of the absolute value
of the first term in $r^h_{12}$ is bounded by $C\eps \|f\|_2\|R\|_{1,\R}$. A similar
estimate with $|D^- U^0_k| \le Ch$ (due to super-convergence) shows that
the second term in $r^h_{12}$ has absolute mean bounded by $C\eps h^{-1}\|R\|_{1,\R}$.
Therefore, we have $\E|r^h_{12}(x)| \le C\eps 
h^{-1}(1+\|f\|_2)\|R\|_{1,\R}$.  We remark also that in the case of long 
range correlations, we should apply Lemma \ref{lem:ointl} instead.

Moving on to $K_2(x)$, we observe that for fixed $x$, $K_2(x)$ reduces to a 
sum over at most two terms, due to the fact that $\phi^j_\eps$ and 
$\phi^j_0$ have local support only.
Let $j(x)$ be the index so that $x \in (x_{j(x)-1},x_{j(x)}]$. We have
$$
\begin{aligned}
K_2(x) & = \sum_{j = 1}^N D^- U^0_j (\tilde{\phi}^j_\eps - 
\tilde{\phi}^j_0)(x) = D^- U^0_{j(x)}(\tilde{\phi}^{j(x)}_\eps - 
\tilde{\phi}^{j(x)}_0)(x)\\
& \approx D^- U^0_{j(x)} \frac{\astar}{h} \left(\int_{x_{j(x) - 1}}^{x} 
q_\eps(t) dt
- \frac{x - x_{j(x) - 1}}{h} \int_{x_{j(x) - 1}}^{x_{j(x)}} q_\eps(t) 
dt\right).
\end{aligned}
$$
In the second step above, we used the decomposition of 
$\tilde{\phi}_\eps$ again. The error we make in this step is
\begin{equation}
r^h_2(x) = \tilde{r}_{1j(x)} \frac{\astar D^- U^0_{j(x)}}{h} 
\left(\int_{x_{j(x) - 1}}^{x} q_\eps(t) dt
- \frac{x - x_{j(x) - 1}}{h} \int_{x_{j(x) - 1}}^{x_{j(x)}} q_\eps(t) 
dt\right).
\label{eq:rh2}
\end{equation}
We verify again that $\E|r^h_2(x)| \le C\eps\|R\|_{1,\R}$.

Now for $K_3(x)$, we use Cauchy-Schwarz and have
\begin{equation}
\E |K_3(x)| \le  \E \left(\sum_{j = 1}^{N-1} |U^h_{\eps j} - 
U^h_{0j}|^2\right)^{\frac 1 2} \left( \sum_{j = 1}^{N-1} (\phi^j_\eps - 
\phi^j_0)^2(x)\right)^{\frac 1 2}
\le C \frac{\eps}{h^2}\|R\|_{1,\R}(1+\|f\|_2).
\label{eq:K3bdd}
\end{equation}
The last inequality is due to \eqref{eq:msfemell2} and \eqref{eq:phibdd}.

In the approximations of $K_1(x)$ and $K_2(x)$, we change the order
of summation and integration. We find that $K_1(x)$ is then $\int_0^1 
L^h_1(x,t)q_\eps(t)dt$ plus the error term $r^h_{11}+r^h_{12}$, and 
$K_2(x)$ is $\int_0^1 L^h_2(x,t) q_\eps(t) dt$ plus the error term $r^h_2$.  
Therefore we proved \eqref{eq:crtdec} with $r^h_\eps(x) = r^h_{11}+r^h_{12} 
+ r^h_2 + K_3(x)$. The estimates above for these
error terms are uniform in $x$, verifying \eqref{eq:rbdd}.
\end{proof}

\subsection{Weak convergence of the corrector of a multiscale scheme}
\label{sec:wcc}
In this section, we characterize the limit of the corrector $u^h_\eps - 
u^h_0$, with proper scaling, in the multiscale scheme when $\eps$ is sent 
to zero. As we have seen, the scaling depends on the correlation range of 
the random media.

\begin{proposition}
\label{prop:wcone}
Let $u^h_\eps$ be the solution to \eqref{eq:rode} given by a multi-scale 
scheme that satisfies {\upshape (N1)-(N3)}. Suppose \eqref{eq:holder} 
holds.  Let $u^h_0$ be the standard FEM solution to \eqref{eq:hode}.

{\upshape (i)} Suppose that $q(x)$ satisfies {\upshape (S1)-(S3)}
and that the conditions of item one in Proposition \ref{prop:msfemL}
hold. Then,
\begin{equation}
\frac{u^h_\eps - u^h_0}{\sqrt{\eps}} \xrightarrow[\eps \to 0]
{\mathrm{distribution}}  \sigma \int_0^1 L^h(x, t) dW_t.
\label{eq:wcone}
\end{equation}

{\upshape (ii)} Suppose that $q(x)$ satisfies {\upshape (L1)-(L3)}
and that the conditions of item two in Proposition \ref{prop:msfemL}
hold. Then,
\begin{equation}
\frac{u^h_\eps - u^h_0}{\eps^{\frac \alpha 2}} \xrightarrow[\eps \to 0]
{\mathrm{distribution}} \sigma_H \int_0^1 L^h(x, t) dW^H_t.
\label{eq:wconel}
\end{equation}
The real number $\sigma$ is defined in \eqref{eq:sigmadef} and
$\sigma_H$ is defined in Theorem \ref{thm:msfeml}.
\end{proposition}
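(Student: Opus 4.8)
The plan is to reduce both parts to a central limit theorem for a single scalar oscillatory integral, using the decomposition established in Proposition \ref{prop:msfemL}, and then to upgrade convergence of finite‑dimensional distributions to weak convergence in $\Ct([0,1])$ via tightness. In case (i) I would write the scaled corrector as
\[
\frac{u^h_\eps(x)-u^h_0(x)}{\sqrt\eps} \;=\; \frac{1}{\sqrt\eps}\int_0^1 L^h(x,t)\,q_\eps(t)\,dt \;+\; \frac{r^h_\eps(x)}{\sqrt\eps}
\;=:\; M_\eps(x) + R_\eps(x),
\]
and analogously in case (ii) with $\sqrt\eps$ replaced by $\eps^{\frac\alpha2}$. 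The remainder is harmless at a fixed mesh: dividing \eqref{eq:rbdd} by the scaling gives $\sup_x \E|R_\eps(x)| \le C h^{-5/2}\sqrt\eps\to0$ in case (i), and the long‑range bound gives $\sup_x \E|R_\eps(x)| \le C h^{-3/2-\alpha}\eps^{\alpha/2}\to0$ in case (ii), both as $\eps\to0$ with $h$ fixed. Hence $R_\eps(x)\to0$ in $L^1(\Omega)$, and in particular in probability, at each fixed $x$.

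First I would establish convergence of the finite‑dimensional distributions of $M_\eps$. Fix $x_1,\dots,x_m\in[0,1]$ and reals $c_1,\dots,c_m$; by the Cram\'er--Wold device it suffices to treat the single combination
\[
\sum_{i=1}^m c_i\,M_\eps(x_i) \;=\; \frac{1}{\sqrt\eps}\int_0^1 \Psi(t)\,q_\eps(t)\,dt,
\qquad \Psi(t):=\sum_{i=1}^m c_i\,L^h(x_i,t).
\]
Since each $L^h(x_i,\cdot)$ is bounded and piecewise smooth (built from the Lipschitz Green's function $G^h_0(x_i,\cdot)$ and from indicators), $\Psi\in L^2(0,1)$. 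In case (i) the central limit theorem for oscillatory integrals underlying \cite{BP-99} — which rests on the mixing hypothesis (S3) and the variance asymptotics made precise in Lemma \ref{lem:oint} — yields
\[
\frac{1}{\sqrt\eps}\int_0^1 \Psi(t)q_\eps(t)\,dt \;\xrightarrow[\eps\to0]{\mathrm{d}}\; \mathcal N\!\big(0,\ \sigma^2\|\Psi\|_2^2\big),
\]
which is exactly the law of $\sum_i c_i\,\sigma\int_0^1 L^h(x_i,t)\,dW_t$. In case (ii) the Gaussian‑subordination limit theorem of \cite{BGMP-AA-08} (Lemma \ref{lem:ointl}) gives the centered Gaussian with fractional variance $\sigma_H^2\,H(2H-1)\iint \Psi(t)\Psi(s)|t-s|^{2H-2}\,dt\,ds$, i.e. the law of $\sum_i c_i\,\sigma_H\int_0^1 L^h(x_i,t)\,dW^H_t$. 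Combining with the pointwise vanishing of $R_\eps$ and Slutsky's theorem at the level of random vectors, the finite‑dimensional distributions of the \emph{full} scaled corrector converge to those of the announced limit process.

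The main work is tightness in $\Ct([0,1])$, which I would obtain from the Kolmogorov--Chentsov criterion by bounding fourth moments of increments uniformly in $\eps$. The deterministic input is a H\"older estimate on the kernel: since $x\mapsto G^h_0(x,x_k)$ is Lipschitz and the only non‑Lipschitz piece $L^h_2$ consists of indicators localized to the single interval $I_{j(x)}$ of length $h$, one checks $\|L^h(x,\cdot)-L^h(x',\cdot)\|_2 \le C_h\,|x-x'|^{1/2}$ for a constant $C_h$ depending on $h$. The probabilistic input is the fourth‑order estimate
\[
\E\Big|\tfrac{1}{\sqrt\eps}\int_0^1 \Psi\,q_\eps\Big|^4 \le C\,\|\Psi\|_2^4
\qquad\text{(with the analogous }\eps^{\alpha/2}\text{ bound in the long‑range case)},
\]
with $C$ independent of $\eps$ — precisely the fourth‑moment lemmas relegated to the appendix, proved for SRC from (S3) and for LRC from the diagram/Wick calculus for $q=\Phi(g)$. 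Applying this with $\Psi=L^h(x,\cdot)-L^h(x',\cdot)$ gives $\E|M_\eps(x)-M_\eps(x')|^4 \le C_h|x-x'|^2$ uniformly in $\eps$, verifying Kolmogorov's condition with $\beta=1$ and hence tightness of $\{M_\eps\}$. I expect this fourth‑moment control — isolating the Gaussian contribution $(\E|\cdot|^2)^2$ from the cumulant corrections, uniformly in $\eps$ and in each correlation regime — to be the principal obstacle; the kernel estimate, though slightly delicate at the endpoints where $L^h_2$ jumps, is routine once the localization to $I_{j(x)}$ is used.

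Finally I would assemble the pieces. The constituents $r^h_{11},r^h_{12},r^h_2$ of the remainder are again oscillatory integrals against $q_\eps$ whose kernels enjoy the same H\"older control, and the quadratic term $K_3$ is handled by Cauchy--Schwarz together with \eqref{eq:msfemell2}; consequently $\{R_\eps\}$ satisfies increment bounds of the same order and is tight with finite‑dimensional limits identically zero, so $R_\eps\to0$ in $\Ct([0,1])$ in probability. Together with the convergence of finite‑dimensional distributions of $M_\eps$ and the tightness of $\{M_\eps\}$, Prohorov's theorem and identification of the limit give $M_\eps \Rightarrow \sigma\int_0^1 L^h(x,\cdot)\,dW$ (resp. $\sigma_H\int_0^1 L^h(x,\cdot)\,dW^H$) in $\Ct([0,1])$. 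Slutsky's theorem in $\Ct([0,1])$ then yields \eqref{eq:wcone} and \eqref{eq:wconel}, completing the proof.
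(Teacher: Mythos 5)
Your overall architecture coincides with the paper's: the decomposition from Proposition \ref{prop:msfemL} into $M_\eps + R_\eps$, the Cram\'er--Wold reduction of finite-dimensional distributions to the scalar limits \eqref{eq:cltbal} and \eqref{eq:cltbg}, the $L^1(\Omega)$ vanishing of the remainder via \eqref{eq:rbdd}, and Kolmogorov tightness. The genuine gap is in your ``probabilistic input'' for tightness. The blanket bound $\E\big|\eps^{-1/2}\int_0^1 \Psi\, q_\eps\,dt\big|^4 \le C\|\Psi\|_2^4$ is \emph{not} what the appendix provides and does not follow from (S3): Lemma \ref{lem:fmtint} is stated only for the integral of $q_\eps$ over a single interval against a kernel measured in $L^\infty$, with bound $\|m\|_\infty^4|x-y|^2$, and the underlying pairing estimate Lemma \ref{lem:fmt} controls the fourth moment by products $\rho^{1/2}\rho^{1/2}$ in which the ``crossed'' pairings (e.g.\ $p(1)=p(3)$) yield, after the change of variables, a term of size $\|\Psi\|_1\,\|\Psi\|_\infty\,\|\Psi\|_2^2$ rather than $\|\Psi\|_2^4$; since $\|\Psi\|_2^2\le \|\Psi\|_1\|\Psi\|_\infty$, your pure-$L^2$ statement is strictly stronger than what the mixing hypothesis delivers and is unproven. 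The gap is repairable inside your framework: for the increments $\Psi=L^h(x,\cdot)-L^h(y,\cdot)$ one has, in addition to $\|\Psi\|_2^2\le C_h|x-y|$, also $\|\Psi\|_\infty\le C_h$ and $\|\Psi\|_1\le C_h|x-y|$ (the non-Lipschitz piece $L^h_2$ is supported on at most two intervals), so the correct mixed-norm bound still gives $\E|M_\eps(x)-M_\eps(y)|^4\le C_h|x-y|^2$. The paper reaches the same endpoint differently, by the case analysis $j-k\ge 2$, $j-k=0$, $j-k=1$ on $L^h_2$ and direct application of Lemma \ref{lem:fmtint} to $\int_y^x q_\eps$ and $\int_{I_j}q_\eps$; also note your Kolmogorov exponents should read $\beta=4$, $\delta=1$, not ``$\beta=1$''.

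A second, related misstep concerns the long-range case: you assert that the needed fourth-moment lemma for $q=\Phi(g)$ is ``proved in the appendix from diagram/Wick calculus,'' but no such lemma exists in the paper --- the appendix contains only the second-moment estimate Lemma \ref{lem:ointl} for the subordinated model (L1)--(L3). Fourth moments are in fact unnecessary there: by \eqref{eq:koll} one has $\E\big(\eps^{-\alpha/2}\int_x^y q_\eps\big)^2\le C|x-y|^{2-\alpha}$ with $2-\alpha>1$, so the Kolmogorov criterion \eqref{eq:kolmogorov} already holds with $\beta=2$ and $\delta=1-\alpha$. This is precisely the paper's shortcut, and you should substitute it for the Wick-calculus step; developing fourth-moment bounds for the non-Gaussian subordinated process would be substantial extra work that the statement does not require. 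With these two corrections your proof closes and is otherwise essentially the paper's argument, including the treatment of the remainder terms $r^h_{11}$, $r^h_{12}$, $r^h_2$ and $K_3$.
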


These results allow us to prove the weak convergence in step (ii) of the diagram in \eqref{eq:cd} for fairly general schemes. A standard method to attain such
weak convergence results is to use the following Proposition; see 
\cite[p.64]{KS-BMSC}.

\begin{proposition}
Suppose $\{M_\eps\}$ with ${\eps \in (0,1)}$ is a family of random processes with values in the space of continuous functions $\Ct$
and $M_\eps(0) = 0$. Then $M_\eps$ converges in distribution to $M_0$
as $\eps \to 0$ if the following holds:

%\smallskip

{\upshape (i) (Finite-dimensional distributions)} for any $0 
\le x_1\le \cdots \le x_k \le 1$, the joint distribution of $(M_\eps(x_1), 
\cdots, M_\eps(x_k))$ converges to that of $(M_0(x_1), \cdots, M_0(x_k))$ 
as $\eps \to 0$.

%\smallskip

{\upshape (ii) (Tightness)} The family $\{M_\eps\}_{\eps \in (0,1)}$ is a 
tight sequence of random processes in $\Ct(I)$. A sufficient condition is 
the Kolmogorov criterion: $\exists \delta, \beta, C>0$ such that
\begin{equation}
\E\left\{ \left\lvert M_\eps(s) - M_\eps(t)\right\rvert^{\beta} \right\} 
\le C|t-s|^{1+\delta},
\label{eq:kolmogorov}
\end{equation} 
uniformly in $\eps$ and $t, s \in (0,1)$.
\label{prop:wc}
\end{proposition}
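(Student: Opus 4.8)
The plan is to recognize this as the standard Prokhorov-type criterion for weak convergence in the Polish space $\Ct = \Ct([0,1])$ under the supremum norm, and to prove it in two stages: first that finite-dimensional convergence together with tightness yields weak convergence, and then that the moment bound \eqref{eq:kolmogorov} is a sufficient condition for tightness.

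For the first stage I would combine Prokhorov's theorem with a subsequence argument. By tightness (ii) the family of laws $\{\mathrm{Law}(M_\eps)\}$ is relatively compact for the weak topology on $\Ct$, so along any sequence $\eps_n \downarrow 0$ there is a subsequence $\eps_{n_k}$ for which $\mathrm{Law}(M_{\eps_{n_k}})$ converges weakly to some probability measure $\mu$. The evaluation maps $\pi_{x_1,\dots,x_m}\colon f \mapsto (f(x_1),\dots,f(x_m))$ are continuous from $\Ct$ into $\R^m$, so by the continuous mapping theorem the push-forwards of $\mathrm{Law}(M_{\eps_{n_k}})$ converge to $\mu\circ\pi_{x_1,\dots,x_m}^{-1}$; comparing with the assumed finite-dimensional convergence (i) forces the finite-dimensional distributions of $\mu$ to agree with those of $M_0$. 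Since the cylinder sets form a $\pi$-system generating the Borel $\sigma$-algebra of the separable space $\Ct$, the uniqueness theorem for measures gives $\mu = \mathrm{Law}(M_0)$. As every subsequential weak limit is therefore $\mathrm{Law}(M_0)$, the full family converges weakly to $M_0$.

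For the second stage I would verify the Arzel\`a--Ascoli-based tightness criterion in $\Ct$: the family is tight provided (a) the laws of $M_\eps(0)$ are tight and (b) the moduli of continuity are uniformly small, i.e. for each $\eta>0$ one has $\lim_{\rho \to 0}\limsup_{\eps}\Pb\big(\sup_{|s-t|\le \rho}|M_\eps(s)-M_\eps(t)|>\eta\big)=0$. Condition (a) is immediate because $M_\eps(0)=0$. For (b) I would run the Kolmogorov--Chentsov dyadic chaining: using \eqref{eq:kolmogorov} with Markov's inequality to bound increments across dyadic subintervals, and then summing the geometric series generated by the surplus exponent $1+\delta$, one reaches an estimate of the form $\Pb\big(\sup_{|s-t|\le \rho}|M_\eps(s)-M_\eps(t)|>\eta\big)\le C\eta^{-\beta}\rho^{\delta'}$ with $\delta'>0$ independent of $\eps$, so that letting $\rho\to0$ gives (b).

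The hard part will be the uniformity in $\eps$ in the chaining estimate of the second stage: the dyadic argument must be arranged so that the constant $C$ and the positive surplus exponent $\delta'$ depend only on $\beta$ and $\delta$ from \eqref{eq:kolmogorov}, and not on $\eps$. This hinges on \eqref{eq:kolmogorov} holding uniformly in $\eps$, as assumed, and on the geometric summation over dyadic scales closing with a strictly positive power of $\rho$. By comparison the first stage is essentially soft once Prokhorov's theorem and the role of the finite-dimensional distributions as a determining class are in hand.
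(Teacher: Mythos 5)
Your proposal is correct, and it coincides with the argument the paper implicitly relies on: the paper offers no proof of this proposition, citing the standard reference \cite[p.64]{KS-BMSC}, and your two-stage argument (Prokhorov's theorem plus a subsequence/uniqueness argument with the finite-dimensional cylinder sets as a determining class, then the Arzel\`a--Ascoli tightness criterion verified via Kolmogorov--Chentsov dyadic chaining, uniformly in $\eps$) is exactly the standard proof found there. You also correctly dispose of the one-point tightness condition using $M_\eps(0)=0$, which is precisely the observation the paper makes in the remark following the proposition.
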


%\begin{remark}
  The standard Kolmogorov criterion for tightness requires the existence of $t \in 
  [0,1]$ and some exponent $\nu$ so that $\sup_{\eps} \E|M_\eps(t)|^\nu \le 
  C$ for $C$ independent of $\eps$ and $\nu$. In our 
  cases, since $M_\eps(0) = 0$ for all $\eps$, this condition is always 
  satisfied.
%\end{remark}

We will prove item one of Proposition \ref{prop:wcone} in detail; proof of
item two follows in the same way, so we only point out the necessary
modifications. Recall the decomposition in \eqref{eq:crtdec}. Let $I_\eps$
denote the first member on the right hand side of this equation, i.e., the
oscillatory integral. Let $\I^h$ denote the right hand side of 
\eqref{eq:wcone}. The strategy in the case of SRC is to show that
$\{\eps^{-\frac 1 2} I_\eps \}$ converges in distribution in $\Ct$ to the
target process $\I^h$, while $\{\eps^{-\frac 1 2} r^h_\eps\}$ converges
in distribution in $\Ct$ to the zero function. Since the zero process is 
deterministic, the convergence in fact holds in probability; see 
\cite[p.27]{B-CPM}. Then \eqref{eq:wcone}
follows.

\begin{proof} {\em Convergence of $\{\eps^{-\frac 1 2} I_\eps\}$}. We first
check that the finite dimensional distributions of $I_\eps(x)$ converge
to those of $\I^h(x)$. Using characteristic functions, this amounts to showing
\begin{equation*}
\E \exp\left(\frac{i}{\sqrt{\eps}} \int_0^1 q_\eps(t) 
\textstyle\sum_{j=1}^n \xi_j L^h(x^j, t) dt\right) \xrightarrow{\eps \to 0} 
\E \exp \left( i \sigma \int_0^1 \textstyle \sum_{j=1}^n \xi_j L^h(x^j,t) 
dW_t\right),
\end{equation*}
for any positive integer $n$, and any $n$-tuples $(x^1, \cdots, x^n)$
and $(\xi_1, \cdots, \xi_n)$. We set $m(t) = \sum_{j = 1}^n \xi_j L^h(x^j, 
t)$. The convergence above is proved if we can show
\begin{equation}
    \frac{1}{\sqrt{\eps}} \int_0^1 q_\eps(t) m(t) dt \xrightarrow[\eps \to 
    0]{\mathrm{distribution}} \sigma\int_0^1 m(t) dW_t,
    \label{eq:cltbal}
\end{equation}
for any $m(t)$ that is square integrable on $[0,1]$.  Indeed, this
convergence holds as long as $q(x,\omega)$ is a stationary 
mean-zero process that admits an integrable $\rho$-mixing coefficient $\rho(r) 
\in L^1(\R)$. This result is more or less standard, and a proof can be found 
in \cite[Theorem 3.7 and its proof]{B-CLH-08}. Our assumptions (S1)-(S3)
guarantee the existence of such a $\rho(r)$. Therefore, we proved the
convergence of the finite distributions of $\{\eps^{-\frac 1 2} I_\eps\}$.

Next, we establish tightness of $\{\eps^{-\frac 1 2} I_\eps(x)\}$ by verifying
\eqref{eq:kolmogorov}. Consider the fourth moments and recall $L^h = L^h_1
+ L^h_2$ in \eqref{eq:msfemL}; we have
\begin{equation}
    \begin{aligned}
      \E (I_\eps(x) - I_\eps(y))^4 \le 8 & \Big\{\E 
      \Big(\frac{1}{\sqrt{\eps}} \int_0^1 q_\eps(t) (L^h_1(x,t) - 
      L^h_1(y,t)) dt \Big)^4\\
      & \quad + \E \Big(\frac{1}{\sqrt{\eps}} \int_0^1 q_\eps(t) 
      (L^h_2(x,t) - L^h_2(y,t)) dt \Big)^4\Big\}.
    \end{aligned}
    \label{eq:dLh12}
  \end{equation}
  We estimate the two terms on the right separately. For the first term we 
  observe that $L^h_1(x,t)$ is Lipschitz continuous in $x$. This is due to  
  the fact that $G^h_0(x,x_k)$ is Lipschitz in $x$ with a universal 
  Lipschitz coefficient. Since the other terms in the expression of 
  $L^h_1(x,t)$ in \eqref{eq:msfemL} are bounded by $C$, we 
  have
  \begin{equation*}
    |L^h_1(x,t) - L^h_2(y, t)| \le \frac{C}{h} |x - y|.
  \end{equation*}
  We use this fact and apply Lemma 
  \ref{lem:fmtint} to deduce
  \begin{equation}
    \E \Big(\frac{1}{\sqrt{\eps}}\int_0^1 q_\eps(t) (L^h_1(x,t) - 
    L^h_1(y,t)) dt \Big)^4 \le \frac{C}{h^4} |x-y|^4.
    \label{eq:fLh1}
  \end{equation}
  The constant $C$ above depends on $\lambda$, $\Lambda$, and 
  $\|\rho^{\frac 1 2}\|_{1, \R_+}$.
  
  To estimate the second term in \eqref{eq:dLh12}, consider two distinct 
  points $y<x$. Let $j$ and $k$ be the indices such 
  that $x \in (x_{j-1}, x_j]$ and $y \in (x_{k-1}, x_k]$. Then one of the 
  following holds: $j-k \ge 2$, $j-k = 0$ or $j-k = 1$. In the first case, 
since $|D^- U^0| \le Ch$ for some $C$ depending on $\lambda, \Lambda$ and 
$\|f\|_2$, we have the following crude bound.
  $$
    |L^h_2(x, t) - L^h_2(y, t)| \le C \le \frac{C}{h}|x - y|.
  $$
  The same analysis leading to \eqref{eq:fLh1} applies, and the second term 
  in \eqref{eq:dLh12} is bounded by $C|x-y|^4/h^4$ in this case.  
  
  When $|j-k| = 0$, $x$ and $y$ are in the same interval $(x_j, x_{j+1})$.  
  We can write
  \begin{equation}
    \int_0^1 q_\eps(t) (L^h_2(x,t) - L^h_2(y,t)) dt = \frac{\astar D^- 
    U^h_{0j}}{h} \left( \int_y^x q_\eps(t) dt - \frac{x-y}{h}\int_{I_j} 
    q_\eps(t) dt \right).
    \label{eq:jek}
  \end{equation}
  Since $x$ and $y$ are in the same interval, the function $(x-y)/h$
  is bounded by one. Now Lemma \ref{lem:fmtint} applies and we see
  that the fourth moments of the members in \eqref{eq:jek} are bounded by
  $$
  C\left[\E\left(\frac{1}{\sqrt{\eps}} \int_x^y q_\eps(t)dt\right)^4 +
  \left(\frac{x-y}{h}\right)^4 \E\left(\frac{1}{\sqrt{\eps}} \int_{I_k} q_\eps(t)dt\right)^4\right] \le C|x-y|^2.
  $$
  When $j-k=1$, we have
  \begin{equation*}
    \begin{aligned}
      \int_0^1 q_\eps(t) L^h_2(y,t) dt &= \frac{\astar D^- U^h_{0j-1}}{h} 
      \left(\int_{x_{j-2}}^y q_\eps(t) dt - 
      \frac{y-x_{j-2}}{h}\int_{x_{j-2}}^{x_{j-1}} q_\eps (t) dt \right)\\
      & = \frac{\astar D^- U^h_{0j-1}}{h} \left( - \int_y^{x_{j-1}} 
      q_\eps(t) dt - \frac{y-x_{j-1}}{h}\int_{x_{j-2}}^{x_{j-1}} q_\eps (t) 
      dt
      \right).
    \end{aligned}
  \end{equation*}
  Let $x_{j-1}$ play the role of $x$ in \eqref{eq:jek} and notice that
  $L^h_2(x_{j-1},t) = 0$.  We get
  \begin{equation*}
    \E \left( \frac{1}{\sqrt{\eps}} \int_0^1 q_\eps(t) L^h_2(y,t) dt \right)^4 
    \le C\frac{|y - x_{j-1}|^2}{h^2}.
  \end{equation*}
  Similarly, in the interval where $x$ lands, let $x_{j-1}$ play the role
  of $y$ in \eqref{eq:jek}. We have
  \begin{equation*}
    \E \Big( \frac{1}{\sqrt{\eps}} \int_0^1 q_\eps(t) L^h_2(x,t) dt \Big)^4 
    \le C\frac{|x - x_{j-1}|^2}{h^2}.
  \end{equation*}
  We combine these estimates and see that in this case, the second term in 
  \eqref{eq:dLh12} is bounded by
  \begin{equation*}
    \begin{aligned}
      & 8 \Big(\E \left( \frac{1}{\sqrt{\eps}} \int_0^1 q_\eps(t) 
      L^h_2(y,t) dt \Big)^4 + \E \Big( \frac{1}{\sqrt{\eps}} \int_0^1 
      q_\eps(t) L^h_2(x,t) dt \Big)^4 \right)\\
      \le & C\frac{|x_{j-1} - y|^2 + |x - x_{j-1}|^2}{h^2} \le C 
      \frac{|x-y|^2}{h^2}.
    \end{aligned}
  \end{equation*}
  In the last inequality, we used the fact that $a^2 + b^2 \le (a+b)^2$ for two non-negative numbers $a$ and $b$. 
%  [WE DONT NEED THE TWO FOR NONNEGATIVE NUMBERS]

  Combine these three cases to conclude that for any $x, y \in [0,1]$, the 
  second term in \eqref{eq:dLh12} is bounded by $C|x-y|^2/h^2$. This, together
  with \eqref{eq:fLh1}, shows
  \begin{equation}
    \E \Big( \frac{1}{\sqrt{\eps}} \int_0^1 q_\eps(t) (L^h(x,t) - L^h(y,t)) 
    dt \Big)^4 \le C \frac{|x-y|^2}{h^4}.
  \end{equation}
  In other words, $\{\eps^{-\frac 1 2}I_\eps(x)\}$ satisfies \eqref{eq:kolmogorov} with 
  $\beta = 4$ and $\delta = 1$, and is therefore a tight sequence. Consequently, it
  converges to $\I^h$ in distribution in $\Ct$.
    
{\em Convergence of $\{\eps^{-\frac 1 2} r^h_\eps\}$}.
For the convergence of finite dimensional distributions, we need to show
\begin{equation*}
\E \exp \left(i\cdot \frac{1}{\sqrt{\eps}} \textstyle \sum_{j = 1}^n \xi^j 
r^h_\eps(x^j) \right) \to 1,
\end{equation*}
for any fixed $n$, $\{x^j\}_{j=1}^n$ and $\{\xi_j\}_{j=1}^n$. Since 
$|e^{i\theta} - 1|\le |\theta|$ for any real number $\theta$, the left hand side of the equation above can be bounded by
\begin{equation*}
  \frac{1}{\sqrt{\eps}} \E\Big|\textstyle \sum_j \xi_j r^h_\eps(x_j)\Big|
\le  \textstyle\sum_j |\xi_j| \displaystyle \frac{1}{\sqrt{\eps}} 
\sup_{1\le j \le n} \E|r^h_\eps(x_j)|.
%\label{}
\end{equation*}
The last sum above converges to zero thanks to \eqref{eq:rbdd}, completing 
the proof of convergence of finite dimensional distributions.

For tightness, we recall that $r^h_\eps(x)$ consists of $r^h_{11}$ in 
\eqref{eq:rh11}, $r^h_{12}$ in \eqref{eq:rh12}, $K_3(x)$ in 
\eqref{eq:crtdec} and $r^h_2(x)$ in \eqref{eq:rh2}. In the first three 
functions, $x$ appears in Lipschitz continuous terms, e.g., in
$D^-G^h_0(x;x_k)$ or $\phi^j_\eps(x)-\phi^j_0(x)$. Meanwhile, the
terms that are $x$-independent have mean square of order $O(\eps)$
or less. Therefore, we can choose $\beta = 2$ and $\delta = 1$ in
\eqref{eq:kolmogorov}. For instance, we consider $r^h_{12}(x)$ in
\eqref{eq:rh12}. Since $q_\eps$ is uniformly bounded, the integrals of 
$q_\eps$ on the interval $I_k$ are bounded by $Ch$. Recall that $|D^- 
U^0_k| \le Ch$ also; we have
\begin{equation*}
\left\lvert r^h_{12}(x) - r^h_{12}(y) \right \rvert \le \sum_{k=1}^N |D^- 
(G^h_0(x,x_k) - G^h_0(y,x_k))| \left(|\tilde{r}_{1k}| \int_{I_k} C|f| dt + 
C |\tilde{r}_{2k}| \right).
\end{equation*}
By the Lipschitz continuity of $D^- G^h_0$ and the estimate 
\eqref{eq:phibrbdd}, we have
\begin{equation*}
\E\left(\frac{r^h_{12}(x)-r^h_{12}(y)}{\sqrt{\eps}}\right)^2 \le 
C \frac{1}{\eps} |x-y|^2 \sup_{k}\E\{|\tilde{r}_{1k}|^2 
+|\tilde{r}_{2k}|^2\} \le C \frac{|x-y|^2}{h}.
\end{equation*}

Similarly, we can control $r^h_{11}$ and $K_3$. For $r^h_{2}$ in 
\eqref{eq:rh2}, we observe that it has the form of the main part of 
$K_2(x)$, which corresponds to $L^h_2(x,t)$ and the second term
in \eqref{eq:dLh12}, except the extra integral of $q_\eps$.
Therefore, the tightness argument for the second term in \eqref{eq:dLh12}
can be repeated. The extra $q_\eps$ term is favorable: we can choose 
$\beta = 2$ and $\delta = 1$ in \eqref{eq:kolmogorov}.

To summarize, $\{ \eps^{-\frac 1 2} r^h_\eps/\sqrt{\eps}\}$
can be shown to be tight by choosing $\beta = 2$ and $\delta = 1$ in 
\eqref{eq:kolmogorov}. Therefore,
it converges to the zero function in distribution in 
$\Ct$. We have thus established the convergence in \eqref{eq:wcone}.

\bigskip

{\em The case of LRC}. In this case, the scaling is
$\eps^{-\frac \alpha 2}$. The proof is almost the same as above
and we only point out the key modifications.

Let us denote the right hand side of \eqref{eq:wconel} by $\I^h_H$.
To show the convergence of the finite dimensional distributions of 
$\{ \eps^{- \frac \alpha 2} I_\eps\}$, instead of using \eqref{eq:cltbal},
we need the following analogue for random media with LRC:
\begin{equation}
\frac{1}{\eps^{\frac \alpha 2}} \int_0^1 q_\eps(t) m(t) dt 
\xrightarrow[\eps \to 0]{\mathrm{distribution}} \sigma_H \int_0^1 m(t) 
dW^H_t,
\label{eq:cltbg}
\end{equation}
where $\sigma_H$ is defined below \eqref{eq:cltonel}. The above holds
only for $q(x,\omega)$ constructed as in (L1)-(L3), and
$m \in L^1(\R)\cap L^\infty(\R)$. This convergence result was established
in Theorem 3.1 of \cite{BGMP-AA-08}. Assumptions on $q(x)$ in item
(ii) allows us to use this result and conclude that the finite dimensional distributions of
$\{\eps^{-\frac \alpha 2} I_\eps(x)\}$ converge to those of $\I^h_H$. 
For the tightness of $\{ \eps^{-\frac \alpha 2} I_\eps(x)\}$, we can follow the 
same procedures that lead to \eqref{eq:fLh1} and \eqref{eq:jek}. We only need to consider second order moments when applying the Kolmogorov criterion thanks to Lemma \ref{eq:ointl}, which says
\begin{equation}
\E\Big(\frac{1}{\eps^{\frac \alpha 2}} \int_x^y q_\eps(t) dt \Big)^2 \le 
C|x-y|^{2-\alpha}.
\label{eq:koll}
\end{equation}
In the SRC case, since $\alpha$ equals one we only have $|x-y|$ on the 
right.  To get an extra exponent $\delta$, we had to consider
fourth moments. In the LRC case, $\alpha$ is less than one, so
we gain a $\delta = 1-\alpha$ from the above estimate.
With this in mind, we can simplify the proof we did for \eqref{eq:wcone} to
prove that $\{\eps^{-\frac \alpha 2} I_\eps\} $ converges to $\I^h_H$.
Similarly, $\{ \eps^{-\frac \alpha 2} r^h_\eps\}$ converges to the zero
function in distribution, and hence in probability, in the space $\Ct$.
The conclusion is that \eqref{eq:wconel} holds. This completes the proof
of Proposition \ref{prop:wcone}.
\end{proof}

%\begin{remark} \label{rem:bmod}
From the proofs of the propositions in this section, the results often hold
if the conditions in item (i) or (ii) of Proposition \ref{prop:msfemL}
are violated in an $\eps$-independent manner. For instance, if the
second equation in \eqref{eq:diffphib} is modified to
\begin{equation}
b^k_\eps - b^k_0   = c(h)[1 + \tilde{r}_{2k}]
\left( -\frac{\astar^2}{h^2} \int_{D_k}
q_\eps(t) dt\right),
\label{eq:diffbmod}
\end{equation}
for some function $c(h)$ and for region $D_k \subset I_k$, then this 
modification will be carried to $L^h(x,t)$ and following estimates, 
but the weak convergences in
Proposition \ref{prop:wcone} still hold.
%\end{remark}

%%%%%%%%%%%%
%%%%%%%%%%%%
\section{Weak convergence as $h$ goes to $0$}
\label{sec:wctwo}

In the previous section, we established weak convergence of
the corrector $u^h_\eps - u^h_0$ of a general multi-scale scheme
when the correlation length $\eps$ of the random medium
goes to zero while the discretization $h$ is fixed. In this section, we
send $h$ to zero, and characterize the limiting process. We aim to
prove the following statement.

\begin{proposition}
\label{prop:wctwo}
Let $L^h(x,t)$ be defined as in \eqref{eq:msfemL}. As $h$ goes to
zero, the Gaussian processes on the right hand sides of
\eqref{eq:wcone} and \eqref{eq:wconel} have the following
limits in distribution in $\Ct$:
\begin{equation}
\sigma \int_0^1 L^h(x,t) dW_t \xrightarrow[h\to 0]{\mathrm
{distribution}} \U(x; W),
\label{eq:wctwo}
\end{equation}
where $\U$ is the Gaussian
process in \eqref{eq:clttwo}. Similarly,
\begin{equation}
\sigma_H \int_0^1 L^h(x,t) dW^H_t \xrightarrow[h\to 0]{\mathrm
{distribution}} \U_H(x; W^H),
\label{eq:wctwol}
\end{equation}
where $\U_H$ is the Gaussian process in \eqref{eq:clttwol}.
\end{proposition}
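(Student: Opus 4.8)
The plan is to treat both limits at once as convergence of centered Gaussian processes in $\Ct$, and to reduce everything to one deterministic fact about the kernels: for each fixed $x\in[0,1]$,
$$ L^h(x,\cdot) \xrightarrow[h\to0]{} L(x,\cdot) \quad\text{in } L^2([0,1]), \qquad \sup_{h}\ \sup_{x,t}\,|L^h(x,t)| \le C. $$
To prove this I would use the splitting $L^h=L^h_1+L^h_2$ from \eqref{eq:msfemL}. The term $L^h_2(x,\cdot)$ is bounded, since $|D^-U^0_{j(x)}|\le Ch$ by super-convergence, and supported on the single interval $I_{j(x)}$ of length $h$, so $\|L^h_2(x,\cdot)\|_2^2\le Ch\to0$. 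For $L^h_1$ I would invoke the super-convergence identity $G^h_0(x,x_k)=G_0(x,x_k)$, so that $\astar D^-G^h_0(x,x_k)/h$ is a difference quotient converging to $\astar\partial_y G_0(x,t)$ and $\astar D^-U^0_k/h\to\astar u_0'(t)$, while the remaining pieces of $L^h_1$ are $O(h)$ on each $I_k$; the a.e. pointwise limit is then $\astar\partial_y G_0(x,t)\cdot\astar u_0'(t)$, which is exactly $L(x,t)$ by the recast formula \eqref{eq:Ldef2}. Dominated convergence, using the uniform bound, upgrades this to $L^2$ convergence.

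Granting this key fact, the finite-dimensional distributions follow because all the limiting objects are mean-zero Gaussian and hence determined by covariances. In the Wiener case $\E[\I^h(x)\,\I^h(y)]=\sigma^2\int_0^1 L^h(x,t)L^h(y,t)\,dt\to\sigma^2\int_0^1 L(x,t)L(y,t)\,dt$, the covariance of $\U(x;W)$. In the fractional case the covariance is $\sigma_H^2\,H(2H-1)\int\!\int L^h(x,t)L^h(y,s)\,|t-s|^{-\alpha}\,dt\,ds$ with $2H-2=-\alpha$; since $0<\alpha<1$ the kernel $|t-s|^{-\alpha}$ is locally integrable and the Hardy--Littlewood--Sobolev inequality makes this bilinear form continuous on $L^{2/(2-\alpha)}([0,1])$. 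As $\|\cdot\|_{2/(2-\alpha)}\le\|\cdot\|_2$ on $[0,1]$, the $L^2$ convergence of the kernels transfers to convergence of the covariances to that of $\U_H(x;W^H)$.

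For tightness I would establish the single deterministic estimate
$$ \int_0^1 \big(L^h(x,t)-L^h(y,t)\big)^2\,dt \le C\,|x-y| \qquad \text{uniformly in } h,\ x,\ y. $$
For $L^h_1$ this rests on the fact that $x\mapsto\astar\partial_y G_0(x,y)$ is Lipschitz with $O(h)$ slope once $t$ lies off the segment $[x,y]$, whereas the ``diagonal'' contribution comes from a $t$-set of measure $\le|x-y|+2h$; a short computation distinguishing $|x-y|\ge h$ from $|x-y|<h$ then gives $C|x-y|$, with no spurious factor of $h$ surviving. For $L^h_2$ I would split on the position of the intervals containing $x$ and $y$: when they coincide a direct computation yields the bound, when they are separated by a full interval one has $|x-y|\ge h$ and the crude bound $\|L^h_2(x,\cdot)\|_2^2+\|L^h_2(y,\cdot)\|_2^2\le Ch\le C|x-y|$ suffices, and the adjacent case reduces to the coincident one by inserting the intermediate node $x_{j-1}$ and using $L^h_2(x_{j-1},\cdot)\equiv0$. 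Gaussianity then finishes the argument: in the Wiener case $\E(\I^h(x)-\I^h(y))^4=3\sigma^4\big(\int(L^h(x,\cdot)-L^h(y,\cdot))^2\big)^2\le C|x-y|^2$, and in the fractional case the increment variance is bounded by $C\|L^h(x,\cdot)-L^h(y,\cdot)\|_{2/(2-\alpha)}^2\le C|x-y|$ via Hardy--Littlewood--Sobolev, so again the fourth moment is $\le C|x-y|^2$. Either way the Kolmogorov criterion \eqref{eq:kolmogorov} of Proposition \ref{prop:wc} holds with $\beta=4$, $\delta=1$, giving tightness and, together with the finite-dimensional convergence, the two claimed limits.

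I expect the main obstacle to be the uniform-in-$h$ modulus estimate for $L^h_1$: one must control the contribution of $t$ near the segment $[x,y]$, where the difference quotient of $\partial_y G_0$ is only $O(1)$ rather than $O(|x-y|)$, and verify that this ``bad'' set is small enough that the resulting $L^2$ bound stays linear in $|x-y|$ uniformly in $h$, in particular that the potential factor of $h$ is absorbed when $|x-y|<h$. The vanishing $L^h_2(x_{j-1},\cdot)\equiv0$ at the nodes is the second delicate point, needed to keep the jump of $L^h_2$ between adjacent intervals from breaking the estimate.
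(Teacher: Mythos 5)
Your proposal is correct and follows the same overall skeleton as the paper's proof (Gaussian covariance convergence for the finite-dimensional distributions, plus the deterministic kernel modulus estimate $\|L^h(x,\cdot)-L^h(y,\cdot)\|_{2}^{2}\le C|x-y|$ fed into the Kolmogorov criterion \eqref{eq:kolmogorov}, with exactly the paper's case analyses: the off-diagonal difference-quotient cancellation versus the diagonal set with the dichotomy $|x-y|\gtrless h$ for the $L^h_1$ part, and the three-case argument for $L^h_2$ using $L^h_2(x_j,\cdot)\equiv 0$ at the nodes), but it streamlines two points in a genuinely different way. First, you skip the paper's further splitting $L^h_1=L^h_{11}+L^h_{12}$ of \eqref{eq:Lsplit}: instead of showing separately that the $L^h_{12}$- and $L^h_2$-integrals converge to the zero process, you prove $L^h(x,\cdot)\to L(x,\cdot)$ in $L^2(dt)$ for each $x$ and let the $\mathrm{It\hat{o}}$/fractional isometries convert this into covariance convergence in one shot; the content is the same (your $\|L^h_2(x,\cdot)\|_2^2\le Ch$ and the vanishing of the $f$-integral terms are exactly what kills $\I^h_2$ and $\I^h_3$ in the paper), but the bookkeeping is lighter. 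Second, and more substantively, in the LRC case the paper proves the $L^p$ moduli \eqref{eq:L1hLp} for all $p$ by interpolation and pairs $L^1$ with $L^{1/(1-\alpha)}$ in Hardy--Littlewood--Sobolev to get a \emph{second}-moment bound $C|x-y|^{2-\alpha}$ ($\beta=2$, $\delta=1-\alpha$), whereas you use the symmetric HLS exponents $p=q=2/(2-\alpha)\le 2$, the monotonicity $\|\cdot\|_{p}\le\|\cdot\|_{2}$ on $[0,1]$, and Gaussianity of the increments to get the fourth-moment bound $C|x-y|^2$ ($\beta=4$, $\delta=1$) from the single $L^2$ estimate \eqref{eq:Uh1fmt5} alone. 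Your route needs only one deterministic estimate and treats SRC and LRC uniformly; the paper's route avoids fourth moments in the fractional case and records the sharper second-moment Hölder exponent $2-\alpha$, mirroring its $\eps\to 0$ tightness argument. The two delicate points you flag (the diagonal $t$-set of measure $\le|x-y|+2h$, and the nodal vanishing of $L^h_2$) are precisely the ones the paper handles, so there is no gap.
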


We consider the case of SRC first. Recall
that $\I^h(x)$ denotes the left hand side of \eqref{eq:wctwo}. It
can be split further into three terms as follows. Let us first split
$L^h_1(x,t)$ into two pieces:
\begin{equation}
\begin{aligned}
L^h_{11}(x, t) & = \sum_{k=1}^N {\bf 1}_{I_k}(t) \frac{\astar D^- G^h_0(x, 
x_k)}{h}\cdot\frac{\astar D^- U^0_k}{h},\\
L^h_{12}(x, t) & = \sum_{k=1}^N {\bf 1}_{I_k}(t) \astar D^- G^h_0(x, x_k) 
\left(\frac{1}{h} \int_t^{x_k} f(s) ds - \frac{1}{h} \int_{x_{k-1}}^{x_k} 
f(s)\tilde{\phi}^k_0(s) ds\right).
\end{aligned}
\label{eq:Lsplit}
\end{equation}
Then define $\I^h_i(x)$ by
\begin{equation}
\I^h_i(x; W) = \sigma \int_0^1 L^h_{1i}(x,t) dW_t, \ i = 1, 2. \quad 
\I^h_3(x; W) = \sigma \int_0^1 L^h_2(x,t) dW_t.
\label{eq:Uhsplit}
\end{equation}
As it turns out, $\I^h_1(x; W)$ converges to the desired limit, while 
$\I^h_2(x;W)$ and $\I^h_3(x;W)$ converge to zero in probability.

\begin{proof}[Proof of \eqref{eq:wctwo}] {\em Convergence of $\{\I^h_1(x)\}$}.
By Proposition \ref{prop:wc}, we show the convergence of finite distributions
of $\{\I^h_1(x)\}$ and tightness. Since all processes involved are Gaussian, for 
finite dimensional distribution it suffices to consider the covariance 
function $R_1(x,y): = \E\{\I^h_1(x)\I^h_1(y)\}$. By the It\^o isometry of 
Wiener integrals, we have
\begin{equation*}
R^h_1(x, y) = \sigma^2 \int_0^1 L^h_{11}(x,t) L^h_{11}(y,t) dt.
\end{equation*}
For any fixed $x$,  $L^h_{11}(x,t)$, as a function of $t$,  is a piecewise 
constant approximation of $L(x,t)$. This is obvious
from the expression of $L(x,t)$ in \eqref{eq:Ldef2}.
Therefore, $L^h_{11}(x,t)$ converges to $L(x,t)$ in \eqref{eq:Ldef} 
pointwise in $t$. Meanwhile, $L^h_{11}$ is uniformly bounded as well.  
The dominant convergence theorem yields that for any $x$ and $y$,
\begin{equation}
\lim_{h \to 0} R^h_1(x,y) = \sigma^2 \int_0^1 L(x,t)L(y,t) dt = \E(\U(x;W) 
\U(y;W)).
\label{eq:Rh1conv}
\end{equation}
This proves convergence of finite dimensional distributions.

The heart of the matter is to show that $\{\U^h_1(x; W)\}$ is a tight sequence.
To this end, we consider its fourth moment
\begin{equation}
\E\Big(\I^h_1(x) - \I^h_1(y)\Big)^4 = \int_{[0,1]^4} \prod_{i=1}^4 
(L^h_{11}(x,t_i) - L^h_{11}(y,t_i)) \E \prod_{i=1}^4 dW_{t_i}.
\label{eq:Uh1fmt}
\end{equation}
Since increments in a Brownian motion are independent Gaussian random variables, we have
\begin{equation}
\E \prod_{i=1}^4 dW_{t_i} = [\delta(t_1 - t_2)\delta(t_3 - t_4) + 
\delta(t_1 - t_3)\delta(t_2 - t_4) + \delta(t_1 - t_4)\delta(t_2 - t_3)] 
\prod_{i=1}^4 dt_i.
\label{eq:gaussprod}
\end{equation}
Using this decomposition, and the fact that the $L^h_{11}$ is piecewise constant,
we rewrite the fourth moment above as three times of
\begin{equation*}
\left(\int_0^1 (L^h_{11}(x,t) - L^h_{11}(y,t))^2 dt\right)^2  = 
\left[\sum_{k=1}^N \left(\frac{\astar D^- (G^h_0(x,x_k) - 
G^h_0(y,x_k))}{h} \frac{\astar D^- U^0_k}{h} \right)^2 h \right]^2.
\end{equation*}
Hence, we need to control $\|L^h_{11}(x,\cdot)-L^h_{11}(y,\cdot)\|_{2}$.
Since $G^h_0$ is the 
Green's function associated to \eqref{eq:hode}, it admits expression \eqref{eq:green} as remarked below Proposition \ref{prop:msfemL}. Fix $y < x$, and let 
$j_1$ and $j_2$ be the indices so that $y \in (x_{j_1 - 1},x_{j-1}]$ and $x 
\in (x_{j_2-1}, x_{j_2}]$. Then we can split the above sum into three 
parts. In the first part, $k$ runs from one to $j_1 - 1$. In that case, 
both $x_k$ and $x_{k-1}$ are less than $y$. Formula \eqref{eq:green} says: 
$\astar (G^h_0(x,x_k)-G^h_0(y,x_k)) = x_k(y-x)$. Consequently,
\begin{equation}
\frac{\astar D^- (G^h_0(x,x_k) - G^h_0(y,x_k))}{h} = (y-x).
\label{eq:DGdh}
\end{equation}
Since $|D^- U^0_k/h|$ is bounded, we have
\begin{equation}
\sum_{k=1}^{j_1 - 1} \left(\frac{\astar D^- (G^h_0(x,x_k) - 
G^h_0(y,x_k))}{h}\right)^2 \left(\frac{\astar D^- U^0_k}{h}\right)^2 h \le 
C|x-y|^2 \sum_{k=1}^{j_1 - 1} h \le C|x-y|^2.
\label{eq:Uh1fmt2}
\end{equation}
Another part is $k$ running from $j_2+1$ to $N$. In that case, both $x_k$ 
and $x_{k-1}$ are larger than $x$. The above analysis yields the same bound  
for this partial sum.

The remaining part is when $k$ runs from $j_1$ to $j_2$. In this case, for 
some $k$, $x_k$ may end up in $(y,x)$, and we have to use different 
branches of \eqref{eq:green} when evaluating $G^h_0(x, x_k)$ and $G^h_0(y, 
x_k)$. Consequently, the cancellation of $h$ in \eqref{eq:DGdh} will not 
happen, and we need to modify our analysis. We observe that, due to the 
Lipschitz continuity of $G^h_0$ and boundedness of $|D^-U^0/h|$, we always 
have
\begin{equation}
\sum_{k=j_1}^{j_2} \left(\frac{\astar D^- (G^h_0(x,x_k) - 
G^h_0(y,x_k))}{h}\right)^2 \left(\frac{\astar D^- U^0_k}{h}\right)^2 \cdot 
h \le C\frac{|x-y|^2}{h^2}\sum_{k = j_1}^{j_2} h.
\label{eq:Uh1fmt3}
\end{equation}
If $j_2-j_1 \le 1$, the last sum above is then bounded by $2C|x-y|^2/h$. In 
this case, it is clear that $|x-y|\le 2h$; as a result, the sum above is 
bounded by $C|x-y|$.

If $j_2-j_1 \ge 2$, the above estimate will not help much if $j_2-j_1$ is 
very large. Nevertheless, since $|D^- G^h_0/h|$ is bounded by some 
universal constant $C$. We have
\begin{equation}
\sum_{k=j_1}^{j_2} \left(\frac{\astar D^- (G^h_0(x,x_k) - 
G^h_0(y,x_k))}{h}\right)^2 \left(\frac{\astar D^- U^0_k}{h}\right)^2 \cdot 
h \le C\sum_{k = j_1}^{j_2} h.
\label{eq:Uh1fmt32}
\end{equation}
Meanwhile, we observe that in this case
\begin{equation*}
\begin{aligned}
3|x-y| & \ge 3(x_{j_2 - 1} - x_{j_1}) = 3(j_2 - j_1 -1)h = (j_2 - j_1 + 1)h 
+ 2(j_2 - j_1 - 2)h\\
& \ge (j_2 - j_1 + 1)h.
\end{aligned}
\end{equation*}
Consequently, the sum in \eqref{eq:Uh1fmt32} is again bounded by $C|x-y|$.
Combining these estimates, we have
\begin{equation}
\|L^h_{11}(x,\cdot) - L^h_{11}(y, \cdot)\|_{2}^2 \le C|x-y|.
\label{eq:Uh1fmt5}
\end{equation}
It follows from the equation below \eqref{eq:gaussprod} that $\{\I^h_1(x)\}$ is a tight sequence and hence converges to $\U(x,W)$.

{\em Convergence of $\I^h_{12}$ to zero function}. 
For the finite dimensional distributions, we consider the covariance function 
$R^h_2(x,y) = \E\{\I^h_2(x)\I^h_2(y)\}$. By It\^o isometry,
\begin{equation}
  \sigma^2 \int_0^1 L^h_{12}(x,t)L^h_{12}(y,t) dt.
  \label{eq:tL2fd}
\end{equation}
Now from the expression of $L^h_{12}(x,t)$, \eqref{eq:Lsplit}, we see 
that $L^h_{12}(x,t)$ converges to zero point-wise in $t$ for any fixed $x$. Indeed, 
in the above expression, $|D^- G^h_0/h|$ is uniformly bounded while the integrals
of $f(s)$ and of $f(s)\tilde{\phi}^k_0(s)$ go to zero due to shrinking 
integration regions. Meanwhile, $L^h_{12}$ is also uniformly bounded.  
The dominated convergence theorem shows $R_2(x,y) \to 0$ for any $x$ and 
$y$, proving the convergence of finite dimensional distributions. The 
tightness of $\{\I^h_2(x)\}$ is exactly the same as $\{\I^h_1(x)\}$; that is to
say, the properties of $D^- G^h_0$ can still be applied. We conclude that
$\{\I^h_2(x)\}$ converges to zero.

{\em Convergence of $\I^h_3(x)$ to zero}. For the finite dimensional 
distributions, we observe that $L^h_2(x,t)$ is uniformly
bounded and for any fixed $x$,  it converges to zero point-wise in $t$, due
to shrinking of the non-zero interval $I_{j(x)}$. The covariance function of 
$\I^h_3(x)$, therefore, converges to zero, proving convergence of finite
dimensional distributions.

For tightness, we consider the fourth moment of $\I^h_3(x)-\I^h_3(y)$.
By \eqref{eq:gaussprod}, it equals
\begin{equation}
  \E(\I^h_3(x;W) - \I^h_3(y; W))^4 =  
  3\left(\int_0^1 (L^h_2(x,t) - L^h_2(y,t))^2 dt\right)^2.
\label{eq:Uh3fmt}
\end{equation}
Recalling the expression of $L^h_2(x,t)$ in \eqref{eq:msfemL},
it is non-zero only on an interval of size $h$ and is uniformly bounded.
Let $j(x)$ be the interval where $L^h_2(x)$ is non-zero, and similarly
define $j(y)$. Assume $y < x$ without loss of generality. 
Consider three cases: $j(x) = j(y)$, $j(y) = j(x)-1$, and $j(x) - j(y) \ge 2$.
In the first case, $x$ and $y$ 
fall in the same interval $[x_{j-1},x_j]$ for some index $j$. Then we 
have
\begin{equation*}
\int_0^1 (L^h_2(x,t) - L^h_2(y,t))^2 dt \le
C\int_0^1 \Big({\bf 1}_{[x,y]}(t) - \frac{x-y}{h}{\bf 1}_{I_j}(t) 
\Big)^2 dt.
\end{equation*}
This integral can be calculated explicitly; it equals:
\begin{equation*}
\begin{aligned}
& \int_0^1 {\bf 1}_{[x,y]}(t) - 2\frac{x-y}{h}{\bf 1}_{[x,y]} + 
\frac{(x-y)^2}{h^2} {\bf 1}_{I_j}(t) dt\\
= & (x-y) - 2\frac{x-y}{h}(x-y) + \frac{(x-y)^2}{h^2} h = (x-y)[1- 
\frac{x-y}{h}].
\end{aligned}
\end{equation*}
Since $|1-(x-y)/h| \le 1$ and $|D^- U^0_k/h| \le C$, the above quantity is bounded by $C|x-y|$.

In the second case, with $j$ the unique index so that $y\le x_j < x$ and 
using the triangle inequality, we have
\begin{equation*}
\|L^h_2(x,t) - L^h_2(y,t)\|^2_2 \le 2\left(\|L^h_2(x,t) - L^h_2(x_j,t)\|^2_2
+ \|L^h_2(x_j,t) - L^h_2(y,t)\|^2_2\right).
\end{equation*}
For the first term of the right hand side above, let $x_j$ play
the role of $y$ in the previous calculation. This term is bounded by
$C(x-x_j)$. Similarly, for the second term, let $x_j$ play the role of
$x$, and we bound this term by $C(x_j - y)$. Consequently, we can
still bound $\|L^h_2(x,\cdot) - L^h_2(y,t)\|_{2}^2$ by $C|x-y|$.

In the third case, we have $h \le |x-y|$. Meanwhile, since $L^h_2$
is uniformly bounded and is nonzero only on intervals of size $h$.
We have
\begin{equation*}
\|L^h_2(x,t) - L^h_2(y,t)\|^2_2 \le Ch \le C|x-y|.
\end{equation*}

Combining these three cases, the conclusion is:
\begin{equation}
\E(\I^h_3(x;W) - \I^h_3(y; W))^4 \le C |x-y|^2.
\label{eq:Uh3fmt2}
\end{equation}
This proves tightness and completes proof of the first
item of Proposition \ref{prop:wctwo}.
\end{proof}

%\begin{remark}\label{rem:roles}
  In the proof above, we used the fact that $G^h_0(x)$ defined in
  \eqref{eq:Gdef} is in fact the real Green's function defined in \eqref{eq:green}. 
  However, the analysis follows as long as $|D^-_k G^h_0(x,x_k)/h|$ is piecewise
  Lipschitz in $x$ with constant independent of $h$, and the total number of 
  pieces does not depend on $h$.

  The fact that $\I^h_2(x)$ and $\I^h_3(x)$ do not contribute to the limit is quite
  remarkable. It says the following. As long as the limiting distribution of the
  corrector $u^h_\eps - u^h_0$ is considered, the role of the multi-scale basis functions
  is mainly to construct the stiffness matrix, which is reflected by $\I^h_1(x)$; its roles
  in constructing the load vector $F^\eps$ and in assembling the global 
function, which are reflected in $\I^h_2(x)$ and $\I^h_3(x)$ respectively, are asymptotically not important.
%\end{remark}

Now, we prove the second part of Proposition \ref{prop:wctwo}.  
\begin{proof}[Proof of \eqref{eq:wctwol}] Recall that $\I^h_H(x)$ denotes
the left hand side of \eqref{eq:wctwol}. Using the same splitting
of $L^h_1$ in \eqref{eq:Lsplit}, we can split $\I^h_H$ into three pieces
$\I^h_{Hi}(x)$, $i = 1, 2, 3$, as in \eqref{eq:Uhsplit}. The only necessary modification
is to replace $\sigma$ by $\sigma_H$ and to replace the Brownian
motion $W_t$ by the fractional Brownian motion $W^H_t$. We show that 
$\I^h_{H1}(x)$ converges
to $\U_H$ while $\I^h_{H2}$ and $\I^h_{H3}$ converge to the zero function.

{\em Convergence of finite dimensional distributions}. 
For $\I^h_{H1}$, we consider the covariance matrix 
$R^h_{H1}(x,y)$ defined by $\E\{\I^h_{H1}(x)\I^h_{H1}(y)\}$. Using
the isometry \eqref{eq:lito}, we have
\begin{equation}
R^h_{H1}(x,y) = \kappa \int_0^1 \int_0^1 
\frac{L^h_{11}(x,t)L^h_{11}(y,s)}{|t-s|^\alpha}\  dt ds.
\label{eq:RH1}
\end{equation}
As before, the integrand in the above integral converges to 
$L(x,t)L(y,s)/|t-s|^{\alpha}$ for almost every $(t,s)$. Meanwhile, since 
$L^h_{11}$ is uniformly bounded, the integrand above is bounded by 
$C|t-s|^{-\alpha}$ which is integrable. The dominated convergence theorem 
then implies that $R^h_{H1}$ converges to the covariance function of 
$\U_H(x;W^H)$. The convergence of finite distributions of $\I^h_{H2}$
and $\I^h_{H3}$ are similarly proved.

{\em Tightness}. Due to LRC, we only need to consider
the second moments in \ref{eq:kolmogorov}. For $\{\I^h_{H1}\}$, we consider
\begin{equation*}
\E(\I^h_{H1}(x) - \I^h_{H1}(y))^2 = \kappa \int_{\R^2} \frac{(L^h_{11}(	
x,t)-L^h_{11}(y,t))(L^h_{11}(x,s) - L^h_{11}(y,s))}{|t-s|^{\alpha}} dt ds,
\end{equation*}
using again the isometry \eqref{eq:lito}. Now we claim that
\begin{equation}
\|L^h_{11}(x,t) - L^h_{11}(y,t)\|_{L^p_t} \le C|x-y|^{\frac 1 p},
\label{eq:L1hLp}
\end{equation}
for any $p\ge 1$. Indeed, for $p = 2$, this is shown in \eqref{eq:Uh1fmt5}; the analysis there actually shows also that the above holds for $p=1$. For $p = \infty$, this follows from the uniform bound on $L^h_{11}$. For other $p$, this follows from interpolation; see \cite[p.75]{LL}.

Now, we apply the Hardy-Littlewood-Sobolev lemma \cite[\S 4.3]{LL} to the
expression of the second moment above. We obtain the bound% can bound it by
\begin{equation*}
C(\alpha) \kappa \|L^h_{11}(x,\cdot) - L^h_{11}(y, \cdot)\|_{L^1} 
\|L^h_{11}(x,\cdot) - L^h_{11}(y, \cdot)\|_{L^{\frac{1}{1-\alpha}}}
\le C |x-y|^{2-\alpha}.
\end{equation*}
Therefore, the Kolmogorov criterion \eqref{eq:kolmogorov} holds with $\beta = 
2$ and $\delta = 1-\alpha$, proving tightness of $\{\I^h_{H1}\}$. 
Tightness of $\{\I^h_{H2}\}$ follows in the same way because 
$L^h_{12}$ has the same structure as $L^h_{11}$ as remarked before.
Tightness of $\{\I^h_{H3}\}$ follows from the same argument above and
the control on $\|L^h_2(x,\cdot)-L^h_2(y,\cdot)\|_2^2$ in the equation
above \eqref{eq:Uh3fmt2}.
This complete the proof of \eqref{eq:wctwol}.
\end{proof}

%%%%%%%%%%%%
%%%%%%%%%%%%
\section{Applications to MsFEM in random media}
\label{sec:msfem}

In this section, we prove Theorems \ref{thm:msfem} and \ref{thm:msfeml}
as an application of the general results obtained in the preceding two sections by verifying that the multiscale finite element method (MsFEM) satisfies the conditions of Proposition \ref{prop:wcone}.

\subsection{The multiscale basis functions}
\label{sec:msfem1}
We describe the MsFEM for \eqref{eq:rode} as a special case of the methods 
developed in \cite{HW-JCP-97, HWC-MC-99}. We verify that this scheme satisfies assumptions (N1)-(N3).

Recall that we have a uniform partition of the interval $[0,1]$ with nodal
points $\{x_k\}_{k=1}^N$, where $x_0 = 0$, and $x_N = 1$, and $I_k$
the $k$th interval $(x_{k-1},x_k)$. The standard hat basis functions
are denoted by $\{\phi^j_0(x)\}_{j=1}^{N-1}$. They span the space $V^h_0$. The
idea of MsFEM is to replace the hat basis functions by $\{\phi^j_\eps\}$
which are constructed as
\begin{equation}
\left\{
\begin{aligned}
&\L_\eps \phi^j_\eps(x) = 0,& & \quad x \in I_1\cup I_2\cup \cdots \cup 
I_{N-1},\\
&\phi^j_\eps = \phi^j_0,& & \quad  x \in \{x_k\}_{k=0}^N.
\end{aligned}
\right.
\label{eq:msfemb}
\end{equation}
Here $\L_\eps$ is the differential operator in \eqref{eq:rode}. Clearly,
$\phi^j_\eps$ has the same support as $\phi^j_0$ and thus satisfies (N1).
Note that the  $\{\phi^j_\eps\}$ are constructed locally
on independent intervals, and are suitable for parallel computing.

For any $k = 1, \cdots, N$, we observe that the only non-zero basis functions
are $\phi^k_\eps$ and $\phi^{k-1}_\eps$. Further, they sum up to one at the boundary
points $x_{k-1}$ and $x_k$. Since equation \eqref{eq:msfemb} is of linear
divergence form, we conclude that $\phi^k_\eps(x) + \phi^{k-1}_\eps(x) \equiv 1$
on the interval. This shows that MsFEM satisfies (N3). In fact, the functions
$\{\tilde{\phi}^k_\eps\}_{k=1}^N $ for MsFEM are constructed by solving
\eqref{eq:msfemb} on $I_k$ with boundary values zero at $x_{k-1}$ and one
at $x_k$. Once they are constructed, $\{\phi^j_\eps\}$ is given by
\eqref{eq:msfembdec}. We can solve $\phi^k_\eps$ analytically  and obtain that
\begin{equation}
\tilde{\phi}^j_\eps = b^j_\eps \int_{x_{j-1}}^x a_\eps^{-1}(t) dt, 
\quad\quad b^j_\eps = \left( \int_{I_j} a_\eps^{-1} (t) dt \right)^{-1}.
\label{eq:msfemphi}
\end{equation}
Consequently, (N1) and (N3) indicate that MsFEM also satisfies (N2).
To calculate the entries of the stiffness matrix $A^h_\eps$, we fix any
$i = 2, \cdots, N-2$, and compute
\begin{equation*}
 (A^h_\eps)_{i-1 i} = - \int_{I_i} a_\eps 
\left(\frac{d\tilde{\phi}^i_\eps}{dx}\right)^2 dx =  -\left(a_\eps 
\frac{d\tilde{\phi}^i_\eps}{dx}\right)^2 \int_{I_i} a_\eps^{-1}(s) ds = -b^i_\eps.
\end{equation*}
The last equality can be verified from the fact that
$\tilde{\phi}^i_\eps$ solves \eqref{eq:msfemb} and integration by
parts. For $i = 1$ and $N$, we verify that \eqref{eq:Ansum} holds
for $b^0_\eps$ and $b^N_\eps$ given by \eqref{eq:msfemphi}.

%\begin{remark}\label{rem:super}
We record here a well-known super-convergence result: When dimension $d=1$,  the standard finite element method is 
super-convergent, in the sense that it yields exact values at nodal points. In our case, $u^h_0(x_k) = u_0(x_k)$, where $u_0$
solves \eqref{eq:hode} and $u^h_0$ is the FEM approximation. 
We observe that this property is preserved by MsFEM. Indeed, let $Pu_\eps$ 
be the projection of $u_\eps$ in $V^h_\eps$, i.e., $Pu_\eps = 
u_\eps(x_j)\phi^j_\eps(x)$. Then, using integrations by parts, \eqref{eq:msfemb}, and the fact that $Pu_\eps - u_\eps$ 
vanishes at nodal points, we have
$$
A_\eps(Pu_\eps, v) = A_\eps(u_\eps, v) = F(v), \quad \forall v \in 
V^h_\eps.
$$
Since the second equality is also satisfied by $u^h_\eps$, it follows that
$A_\eps(Pu_\eps - u^h_\eps,v) = 0$ for any $v$ in $V^h_\eps$. In 
particular, by choosing $v = Pu_\eps - u^h_\eps$ and by coersivity of 
$A_\eps(\cdot,\cdot)$, we conclude that $Pu_\eps = u^h_\eps$.  
The super-convergence result follows.

Several useful results follow from this super-convergent property.
First, $u^h_\eps(x)$ of MsFEM coincides with the true solution 
$u_\eps(x)$ at nodal points.
Note that $u_\eps$ can be explicitly solved analytically and that
$|u_\eps(x) - u_\eps(y)| \le C|x-y|$ for some universal $C$. We then have
\begin{equation}
|D^- U^\eps_k| = |u^h_\eps(x_k) - u^h_\eps(x_{k-1})| \le Ch.
\label{eq:holderplus}
\end{equation}
This improves the condition \eqref{eq:holder} in Proposition \ref{prop:msfemL}
and hence improves several subsequent estimates. Second, a fact which we have used extensively before, we have
$|D^- G^h_0| \le Ch$ and for any fixed $x_k$, $G^h_0(x; x_k)$
defined in \eqref{eq:Gdef} equals the continuous Green's function
$G_0(x,x_k)$ for \eqref{eq:hode}.
This is because the functions agree at the nodal points due to
super-convergence and they are both piece-wise
linear in $x$.
%\end{remark}

\subsection{Proof of Theorems \ref{thm:msfem} and \ref{thm:msfeml}}

Since MsFEM is a scheme that satisfies (N1)-(N3), in order to apply 
\eqref{eq:msfemell2} and \eqref{eq:msfemL} in previous propositions, we 
only need
to check that \eqref{eq:diffphib} and \eqref{eq:phibrbdd} hold.
\begin{lemma}
\label{lem:phibbdd}
Let $\tilde{\phi}^k_\eps$ and $b^k_\eps$ be the functions in {\upshape
(N1)-(N3)}
for MsFEM  defined in \eqref{eq:msfemphi}. Let 
$\tilde{\phi}^k_0$ and $b^k_0$ be the corresponding functions  for FEM.

{\upshape (i)} Suppose $a(x,\omega)$ and $q(x,\omega)$ satisfy
{\upshape (S1)-(S3)}. Then \eqref{eq:diffphib} and \eqref{eq:phibrbdd}
hold and the conclusion of item one in Proposition \ref{prop:msfemL} 
follows.

{\upshape (ii)} Suppose $a(x,\omega)$ and $q(x,\omega)$ satisfy
{\upshape (L1)-(L3)}. Then the conditions and hence the conclusions of the 
second item of Proposition \ref{prop:msfemL} hold.
\end{lemma}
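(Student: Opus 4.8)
The plan is to exploit the fact that for MsFEM both $b^k_\eps$ and $\tilde\phi^k_\eps$ are given in closed form by \eqref{eq:msfemphi}, so that \eqref{eq:diffphib} can be obtained by an exact computation rather than an asymptotic expansion. First I would substitute $a_\eps^{-1}(t) = \astar^{-1} + q_\eps(t)$ (from \eqref{eq:aqdef}) into \eqref{eq:msfemphi} and abbreviate $Q_k = \int_{I_k} q_\eps(s)\,ds$. Then $b^k_\eps = (h/\astar + Q_k)^{-1}$, and subtracting $b^k_0 = \astar/h$ gives, after clearing denominators,
\begin{equation*}
b^k_\eps - b^k_0 = \Big[1 + \tilde r_{2k}\Big]\Big(-\frac{\astar^2}{h^2}\int_{I_k} q_\eps(t)\,dt\Big), \qquad 1 + \tilde r_{2k} = \Big(1 + \tfrac{\astar}{h}Q_k\Big)^{-1}.
\end{equation*}
This is exactly the second line of \eqref{eq:diffphib}. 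For the first line I would write $\tilde\phi^k_\eps(x) = b^k_\eps\big(\astar^{-1}(x-x_{k-1}) + \int_{x_{k-1}}^x q_\eps(t)\,dt\big)$, subtract $\tilde\phi^k_0(x) = (x-x_{k-1})/h$, and factor. The same prefactor $(1+\tfrac{\astar}{h}Q_k)^{-1}$ emerges independently of $x$, so that $\tilde r_{1k} = \tilde r_{2k}$ and the bracketed remainder reproduces the first line of \eqref{eq:diffphib} with its required structure $\int_{x_{k-1}}^t q_\eps - \frac{t-x_{k-1}}{h}\int_{I_k} q_\eps$.

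The key algebraic observation — and the step that makes all estimates uniform — is that the common denominator telescopes exactly:
\begin{equation*}
1 + \frac{\astar}{h} Q_k = \frac{\astar}{h}\int_{I_k} a_\eps^{-1}(t)\,dt,
\end{equation*}
which by uniform ellipticity (S1) lies in $[\lambda/\Lambda, \Lambda/\lambda]$ and is therefore bounded away from $0$ by a universal constant. Consequently $|\tilde r_{1k}| = |\tilde r_{2k}| \le C h^{-1}|Q_k|$ pointwise in $\omega$, with $C$ universal.

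It then remains to estimate $\E|\tilde r_{jk}|^2 \le C h^{-2}\,\E Q_k^2$. In the SRC case I would invoke the self-averaging estimate (Lemma \ref{lem:oint}), which gives $\E Q_k^2 = \E\big(\int_{I_k} q_\eps\big)^2 \le C\eps h\,\|R\|_{1,\R}$, so that $\E|\tilde r_{jk}|^2 \le C(\eps/h)\,\|R\|_{1,\R}$, which is precisely \eqref{eq:phibrbdd}. Since \eqref{eq:holder} already follows from the super-convergence bound \eqref{eq:holderplus}, all hypotheses of item (i) of Proposition \ref{prop:msfemL} hold and its conclusion follows. In the LRC case the only change is to replace Lemma \ref{lem:oint} by its long-range analogue Lemma \ref{lem:ointl}, which yields $\E Q_k^2 \le C\eps^\alpha h^{2-\alpha}$ and hence $\E|\tilde r_{jk}|^2 \le C(\eps/h)^\alpha$, matching the modified estimate in item (ii) of Proposition \ref{prop:msfemL}.

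I do not expect a serious obstacle here: the computation is fully explicit and the only point requiring genuine care is the uniform lower bound on the denominator, which is what guarantees that the remainders are genuinely of the advertised order rather than being corrupted by a small divisor. Once this is in hand, the two correlation regimes differ only in which self-averaging lemma supplies the bound on $\E Q_k^2$, and the remaining verifications are routine.
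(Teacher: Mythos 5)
Your proposal is correct and follows essentially the same route as the paper: both use the closed forms \eqref{eq:msfemphi} to compute $b^k_\eps - b^k_0$ and $\tilde{\phi}^k_\eps - \tilde{\phi}^k_0$ exactly, identify a common factor so that $\tilde{r}_{1k} = \tilde{r}_{2k}$ (the paper writes it as $\pm b^k_\eps \int_{I_k} q_\eps$, equivalent to your $(1+\frac{\astar}{h}Q_k)^{-1}-1$), bound it pointwise via the uniform ellipticity bound $b^k_\eps \le \Lambda h^{-1}$, and then invoke Lemma \ref{lem:oint} (SRC) or Lemma \ref{lem:ointl} (LRC) for the second-moment estimates, with \eqref{eq:holder} supplied by the super-convergence bound \eqref{eq:holderplus}. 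No gaps; your version is, if anything, slightly more careful about the sign and the uniform lower bound on the denominator than the paper's own write-up.
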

\begin{proof} From the explicit formulas \eqref{eq:msfemphi}, we have
\begin{equation*}
b^k_\eps - b^k_0 = \left(\int_{I_k} \frac{1}{a_\eps} dt\right)^{-1} -
\left(\int_{I_k} \frac{1}{\astar} dt\right)^{-1} = - b^k_\eps \frac{\astar}{h}
\int_{I_k} q_\eps(t) dt.
\end{equation*}
Comparing with the second equation in \eqref{eq:diffphib}, we find that
it is satisfied with
\begin{equation}
\tilde{r}_{2k} : = b^k_\eps \int_{I_k} q_\eps(t) dt.
\label{eq:r2k}
\end{equation}
Similarly, we have
\begin{equation}
\tilde{\phi}^k_\eps(x) - \tilde{\phi}^k_0(x) =
b^k_\eps \left(\int_{x_{k-1}}^x q_\eps(s) ds
 - \dfrac{x-x_{k-1}}{h} \int_{x_{k-1}}^{x_k} q_\eps(s) ds\right).
\end{equation}
This shows again that \eqref{eq:diffphib} holds with $\tilde{r}_{1k}$
having the same expression as $\tilde{r}_{2k}$ defined above.
In \eqref{eq:r2k}, since $0\le b^k_\eps \le \Lambda h^{-1}$, we
can apply Lemma \ref{lem:oint} in the case of SRC or apply Lemma 
\ref{lem:ointl} in the case of LRC to conclude that $\E|\tilde{r}_{2k}|^2 
\le Ch^{-1}\eps$ in the first setting,
while $\E|\tilde{r}_{2k}|^2 \le  C(\eps h^{-1})^{\alpha}$ in the second 
setting.
This completes the proof.
\end{proof}

Note that the estimates \eqref{eq:phibdd} and \eqref{eq:bbdd}
follow directly from this lemma. Therefore, we can apply Proposition
\ref{prop:msfemell2} directly. Now we prove Theorem \ref{thm:msfem}.
Estimates \eqref{eq:fem} and \eqref{eq:supL2} do not follow from
Propositions \ref{prop:wcone} and \ref{prop:wctwo} directly and need
additional considerations.

\begin{proof}[Proof of Theorem \ref{thm:msfem}] {\em Finite element
analysis}. We have seen that $u^h_\eps$ super-converges to $u_\eps$. From \eqref{eq:rode} and \eqref{eq:msfemb},
we observe that
the following equation holds on $I_j$ for $j = 1, \cdots, N$:
\begin{equation}
\left\{
\begin{aligned}
& \L_\eps (u_\eps - u^h_\eps) = f, \quad & \text{in } I_j,\\
& u^h_\eps - u_\eps = 0, \quad & \text{on } \partial I_j.
\end{aligned}
\label{eq:lrode}
\right.
\end{equation}
Using the ellipticity of the coefficient and integrations by parts, we obtain
\begin{equation*}
\begin{aligned}
\lambda \vert u^h_\eps - u_\eps \vert_{H^1, I_j}^2 & \le \int_{I_j} a_\eps 
\frac{d}{dx}(u^h_\eps-u_\eps) \cdot \frac{d}{dx} (u^h_\eps - u_\eps) \ dx
= \int_{I_j} (u^h_\eps - u_\eps) \L_\eps(u^h_\eps - u_\eps) \ dx\\
& = \int_{I_j} f(x) (u^h_\eps - u_\eps)(x) \ dx
 \le \|f\|_{2, I_j} \|u^h_\eps - u_\eps\|_{2, I_j}.
\end{aligned}
\end{equation*}
Now recall that the Poincar\'{e}-Friedrichs inequality says that
\begin{equation}
\|u^h_\eps - u_\eps\|_{2, I_j} \le \frac{h}{\pi} \vert u^h_\eps - u_\eps 
\vert_{H^1, I_j}.
\label{eq:PF}
\end{equation}
Combining the inequalities above, we obtain
\begin{equation*}
\vert u^h_\eps - u_\eps \vert_{H^1, I_j} \le 
\frac{h}{\lambda \pi}\|f\|_{2, I_j}.
\end{equation*}
Taking the sum over $j$, we obtain the first inequality in \eqref{eq:fem}. To 
get the second inequality, we first apply the Poincare-Friedrichs inequality 
to the equation above to get
\begin{equation}
\|u^h_\eps - u_\eps\|_{2, I_j} \le \frac{h^2}{\lambda \pi^2} \|f\|_{2, 
I_j},
\label{eq:hlL2}
\end{equation}
and then sum over $j$. This completes the proof of \eqref{eq:fem}
in item one of the theorem.

{\em Energy norm of the corrector}. By energy norm, we mean the
$L^2(\Omega, L^2(D))$ norm. Recall the decomposition of the
corrector into $K_i(x)$ in \eqref{eq:crtex}. For $K_1(x)$,
we apply Cauchy-Schwarz to get the following bound for $|K_1|^2$
$$
 \textstyle \sum_i (U^\eps - U^0)^2_i \textstyle \sum_j (\phi^j_0(x))^2 \le  
\textstyle \sum_i (U^\eps_i - U^0_i)^2 \left(\textstyle \sum_j 
\phi^j_0(x)\right)^2
= \left\| U^\eps - U^0 \right\|_{\ell^2}^2.
$$
In the above derivation, we used the fact that $\phi^j_0(x)$ is 
non-negative, and $\sum_j \phi^j_0(x) \equiv 1$. Now we apply 
\eqref{eq:msfemell2} to control this term. The function $K_2(x)$,
as in the proof of Proposition \ref{prop:msfemL}, 
can be written as $D^- U^0_{j(x)}(\tilde{\phi}^{j(x)}_\eps - 
\tilde{\phi}^{j(x)}_0)$. Then from \eqref{eq:phibdd}, we have $\E|K_2(x)|^2 
\le C\eps\|R\|_{1,\R}$. 
For $K_3$, we have controlled $\E|K_3(x)|$ in \eqref{eq:K3bdd}. To control 
$\E|K_3(x)|^2$, we observe that $|K_3(x)| \le C\|f\|_2$. Note that all three
estimates concluded in the three steps are uniform in $x$.
Combining them, we complete the proof of \eqref{eq:supL2}.

{\em Convergence in distribution as $\eps$ to zero}. To prove item two of the
theorem, we apply \eqref{eq:wcone} of Proposition \ref{prop:wcone}.
We need to verify \eqref{eq:holder} in addition to \eqref{eq:diffphib}
and \eqref{eq:phibrbdd}, which we already verified in the previous lemma.
But this is implied by \eqref{eq:holderplus}, and hence we obtain 
\eqref{eq:cltone}.

{\em Convergence in distribution as $h$ to zero}. To prove \eqref{eq:clttwo},
we apply the first result in Proposition \ref{prop:wctwo}. This completes
the proof of the theorem.
\end{proof}

\begin{proof}[Proof of Theorem \ref{thm:msfeml}] In this case, the random
processes $q(x)$ and $a(x)$ are constructed by (L1)-(L3). To prove the estimate
in the energy norm, we follow the same steps as in the proof above, but use
item two of Proposition \ref{prop:msfemell2} to control the term $\|U^\eps 
- U^0\|_{\ell^2}^2$
in $K_1(x)$ and use Lemma \ref{lem:ointl} to control the terms in $K_2(x)$
and $K_3(x)$.

To obtain the results in \eqref{eq:cltonel} and \eqref{eq:clttwol}, we verify
the conditions in item two of Propositions \ref{prop:wcone} and \ref{prop:wctwo},
applying the second case in Lemma \ref{lem:phibbdd} and following
the steps in the previous proof. This completes the proof of the theorem.
\end{proof}

%%%%%%%%%%%%
%%%%%%%%%%%%
\section{Applications to HMM in random media}
\label{sec:hmm}

In this section, we adapt the general approach described in Sections
\ref{sec:stra} and \ref{sec:wctwo} to the case of heterogeneous multiscale
method (HMM).

\subsection{The heterogeneous multiscale method}

The goal of HMM is to approximate the large-scale properties of the 
solution to \eqref{eq:rode} without computing the homogenized coefficient 
first. Suppose we already know this effective coefficient, i.e., $\astar$ 
in our case. Then the large-scale solution $u_0$ can be solved by 
minimizing the functional
\begin{equation*}
I[u] : = \frac{1}{2} A_0(u,u) - F(u) = \frac{1}{2}\int_0^1  \astar 
\left(\frac{du}{dx}\right)^2\ dx - \int_0^1 f u \ dx.
\end{equation*}
In numerical methods, the first integral above can be computed by the 
following mid-point quadrature rule:
\begin{equation*}
A_0(u,u) \approx \sum_{j=1}^{N} \left(\astar(x^j) 
\frac{du}{dx}(x^j)\right)^2 h.
\end{equation*}
Here $x^j = (x_{j-1} + x_j)/2$ is the mid-point of $I_j$. In HMM,  $\astar$ 
is unknown, and the idea is to approximate $(u'\astar u')(x^j)$ by 
averaging in a local patch around the point $x^j$. For instance, we can 
take
\begin{equation*}
\left(\astar (x^j)\frac{du}{dx} (x^j)\right)^2  \approx \frac{1}{\delta} 
\int_{I^{\delta}_j} \left(a_\eps(s) \frac{d (\lft u)}{dx}(s)\right)^2 ds.
\end{equation*}
Here, $I^\delta_j$ denotes the interval $x^j+{\frac \delta 2}(-1,1)$, that 
is, the small interval centered in $I_j$ with length $\delta$.  The 
operator $\lft$ maps a function $w$ in $V^h_0$, i.e., the space spanned by 
hat functions, to the solution of the following equation:
\begin{equation}
\left\{ \begin{aligned}
&\L_\eps (\lft w) = 0, & & \quad x \in I^\delta_1\cup\cdots \cup 
I^\delta_{N-1},\\
&\lft w = w, & & \quad x \in \{\partial I^\delta_j\}_{j=1}^{N-1}.
\end{aligned}
\right.
\label{eq:lft}
\end{equation}
The idea here is to encode small-scale structures of the random media into 
the construction of the bilinear form. The key difference that 
distinguishes HMM and MsFEM is that the above equations are solved for HMM 
on patches $I^\delta_k$ that are smaller than $I_k$.  We check that $\lft$ 
is a linear operator; therefore, the following approximation of 
$A_0(\cdot,\cdot)$ is indeed bilinear:
\begin{equation}
A^\delta_\eps(w, v) : = \sum_{j=1}^{N} \frac{h}{\delta} \int_{I^\delta_j} 
a_\eps\ \frac{d(\lft w)}{dx}\frac{d(\lft v)}{dx}\ dx.
\label{eq:Adedef}
\end{equation}
With this approximation of the bilinear form, HMM consists finding
\begin{equation*}
u^{h,\delta}_\eps : = \underset{w \in V^h_0}{\mathrm{argmin}}\  \frac{1}{2} 
A_{\eps}^\delta (w, w) - F(w).
\end{equation*}
This variational problem is equivalent to solving $u^{h,\delta}_\eps = 
U^{\eps,\delta}_j\phi^j_0(x)$, where $U^{\eps,\delta}$ is determined by the 
linear system
\begin{equation}
A^{h,\delta}_\eps U^{\eps,\delta} = F^0.
\label{eq:hmml}
\end{equation}
Therefore, the above HMM can be viewed as a finite element method. The 
finite dimensional space here is $V^h_0$. Therefore HMM clearly satisfies 
(N1) and (N3). To check (N2), we calculate the associated stiffness matrix 
$A^{h,\delta}_\eps$. It has entries $A^\delta_\eps(\phi^i_0,\phi^j_0)$. From the defining equation \eqref{eq:lft}, we see that $\lft \phi^i_0$ is 
non-zero only on $I^\delta_i \cup I^\delta_{i+1}$, which implies that 
$A^{h,\delta}_\eps$ is again tri-diagonal.  Further, we verify that $\lft 
\phi^i_0 + \lft \phi^{i-1}_0 = 1$ on the interval $I^\delta_i$, which can 
be obtained from integrations by parts and which implies that 
$A^{h,\delta}_\eps$ satisfies \eqref{eq:Ansum}. Therefore, HMM satisfies 
(N2).

In fact, we can calculate the $b_\eps$ vectors. Let us consider the 
$(i-1,i)$th entry of $A^{h,\delta}_\eps$, where $i$ can be $2,\cdots, 
N-1$.  Since $(\lft \phi^{i-1}_0)' = -(\lft \phi^i_0)'$ on 
$I^\delta_i$, we have
\begin{equation*}
\left(A^{h,\delta}\right)_{\eps i-1 i} = - \frac{h}{\delta} 
\int_{I^\delta_i} a_\eps(s) \left(\frac{d(\lft \phi^i)}{dx}\right)^2 ds.
\end{equation*}
Now from \eqref{eq:lft}, we verify that $a_\eps (\lft \phi^i_0)'$ on 
$I^\delta_i$ is a constant given by
\begin{equation*}
c^\delta_i = \Big(\int_{I^\delta_i} a_\eps^{-1}(s) ds\Big)^{-1} \lft 
\phi^i_0\Big|_{x^i-{\frac \delta 2}}^{x^i + {\frac \delta 2}} = 
\Big(\int_{I^\delta_i} a_\eps^{-1}(s) ds\Big)^{-1} \frac{\delta}{h}.
\end{equation*}
Therefore, we have
\begin{equation}
\left(A^{h,\delta}\right)_{\eps i-1 i} = -(c^\delta_i)^2 \frac{h}{\delta} 
\int_{I^\delta_i}a_\eps^{-1} ds =  - 
\frac{\delta}{h}\left(\int_{I^\delta_i} a_\eps^{-1}(s) ds\right)^{-1} = :  
- b^i_{\eps,\delta}.
\label{eq:Aedex}
\end{equation}
We extend the 
definition of $b^i_{\eps, \delta}$ to the cases of $i=1$ and $i=N$, 
and check that the $(1,1)$th and $(N-1,N-1)$th entries of the stiffness 
matrix also satisfy \eqref{eq:Ansum}. In particular, the action of 
$A^{h,\delta}_\eps$ on a vector satisfies the conservative form as in 
\eqref{eq:conserve}.
In the sequel, to simplify notation, we 
drop the $\delta$ in the notations $A^{h,\delta}_\eps$, $U^{\eps,\delta}$ 
and $b^i_{\eps,\delta}$. 

The well-posedness of the optimization problem above, or equivalently of the 
linear system \eqref{eq:hmml} is obtained by Lax-Milgram. We show that 
the bilinear form $A^\delta_\eps(\cdot,\cdot)$ is continuous and coercive.
Consider two arbitrary functions $w = W_i\phi^i_0$ and $v  = 
V_j\phi^j_0$ in $V^h_0$. Then:
\begin{equation*}
A^h_\eps(w,v) =  W_i A^h_{\eps ij}V_j = -\textstyle \sum_i W_i D^+(b^i_\eps 
D^-V_i) = \textstyle \sum_i D^- W_i b^i_\eps D^- V_i.
\end{equation*}
Estimating the entries of vector $b_\eps$ by its infinity norm and use 
Cauchy-Schwarz, we obtain
\begin{equation*}
|A^h_\eps(w,v)| \le \left(\sup_{1\le i \le N} b^i_\eps \right)
\|D^-W\|_{\ell^2}\|D^-V\|_{\ell^2} \le \Lambda |w|_{H^1}|v|_{H^1}.
\end{equation*}
In the last inequality above, we used the fact that $\lambda h^{-1} \le 
b^i_\eps \le \Lambda h^{-1}$, which can be seen from its definition in 
\eqref{eq:Aedex} and the uniform ellipticity of $a_\eps$, and that 
$\|D^-W\|_{\ell^2} = |w|_{H^1}\sqrt{h}$ for $w\in V^h_0$.  This proves 
continuity.  Taking $w = v$, we have
\begin{equation*}
A^h_\eps(w,w) \ge \left(\inf_{1\le i \le N} b^i_\eps \right)
\|D^-W\|_{\ell^2}\|D^-W\|_{\ell^2} \ge \lambda |w|_{H^1}^2.
\end{equation*}
This proves coercivity. Therefore, by the Lax-Milgram theorem for the 
finite element space \cite[p.137]{QV}, there exists a unique 
$u^{h,\delta}_\eps \in V^h_0$ that solves the optimization problem.  Further, we 
have
\begin{equation}
|u^{h,\delta}_\eps|_{H^1} \le \frac{1}{\lambda} \sup_{w \in V^h_0} 
\frac{F(w)}{|w|_{H^1}} \le \frac{1}{\pi \lambda}\|f\|_2.
\label{eq:uhdH1}
\end{equation}
An immediate consequence  is that $|D^- U^\eps| \le C\sqrt{h}$ by the 
argument following Proposition \ref{prop:msfemell2}.

\subsection{Proof of Theorem \ref{thm:hmm}}

To prove Theorem \ref{thm:hmm}, we apply Proposition 
\ref{prop:msfemL} to write the corrector $u^{h,\delta}_\eps - u^h_0$ as an 
oscillatory integral plus a lower order term. To apply Propositions 
\ref{prop:wcone} and \ref{prop:wctwo} and obtain the weak convergences, we need to consider the difference
$b^k_\eps-b^k_0$ since $\tilde{\phi}^j_\eps=\tilde{\phi}^j_0$ in HMM.
From the expression of $b^k_\eps$ in \eqref{eq:Aedex}, we have
\begin{equation*}
b^k_\eps - b^k_0 = - b^k_\eps \frac{\astar}{\delta} \int_{I^\delta_k} 
q_\eps(t) dt = -(1+\tilde{r}_{2k}) \frac{h}{\delta} \frac{\astar^2}{h^2} 
\int_{I^\delta_k} q_\eps(t) dt,
\end{equation*}
where $\tilde{r}_{2k}$ is a random variable defined by
\begin{equation*}
\tilde{r}_{2k}  = -\frac{h}{\delta} b^k_\eps \int_{I^\delta_k} q_\eps(t) 
dt.
\end{equation*}
We verify that in the case of SRC, i.e., when $q(x)$ satisfies (S1)-(S3), 
we have
\begin{equation}
\E|\tilde{r}_{2k}|^2 \le C\frac{\eps}{\delta}\|R\|_{1,\R}, \quad 
\E\left(b^k_\eps - b^k_0\right)^2 \le C \frac{\eps}{h^2 \delta} 
\|R\|_{1,\R},
\label{eq:bbddh}
\end{equation}
for some universal constant $C$. Comparing this with \eqref{eq:bbdd} and 
\eqref{eq:phibrbdd}, we observe that the estimates have been 
multiplied by a factor $\frac{h}{\delta}$ in the HMM case. Similarly, it 
can be checked that in the case of LRC, i.e., when $q(x)$ satisfies 
(L1)-(L2), these estimates will be multiplied by a factor of 
$\left(\frac{\delta}{h}\right)^\alpha$. With these formulas at hand, we 
prove the third main theorem of the paper.

\begin{proof}[Proof of Theorem \ref{thm:hmm}]{\em Short range media and
amplification effect}.  In this case, the difference of $b^k_\eps - b^k_0$ 
and an estimate of it was captured in \eqref{eq:bbddh} and the equation 
above it.  We cannot apply Propositions 
\ref{prop:msfemL} and \ref{prop:wcone} directly.  However, due to the remark at the end of section \ref{sec:wcc}, similar conclusions still hold. The same procedure 
as in the proof of Proposition \ref{prop:msfemL} shows that the $L^h(x,t)$ 
function for HMM is:
\begin{equation}
L^{h,\delta}(x, t)  = \frac{h}{\delta}\sum_{k = 1}^N  {\bf 
1}_{I^\delta_k}(t) \frac{\astar D^- G^h_0(x, x_k)}{h} \frac{\astar D^- 
U^0_k}{h}.
\label{eq:hmmL}
\end{equation}
The first weak convergence in \eqref{eq:clth} holds with this definition of 
$L^{h,\delta}$ as an application of a modified version of Proposition 
\ref{prop:wcone}.  Indeed, the proof there works with $L^{h,\delta}$ 
playing the role of $L^h_{11}$. The tightness is still obtained from the function 
$D^- G^h_0$, and the factor $\frac{h}{\delta}$ does not play any role at 
this stage.

When $h$ goes to zero, we can follow the proof of Proposition 
\ref{prop:wctwo} to verify the second convergence in \eqref{eq:clth}.  
Indeed, tightness can be proved in exactly the same way. All that needs to be 
modified is the limit of the covariance function of $\U^{h,\delta}(x; W)$, 
which is defined to be $\sigma \int_0^1 L^{h,\delta}(x,t) dW_t$. This  
covariance function, by the It\^o isometry, is as follows:
\begin{equation}
\begin{aligned}
R^{h,\delta}(x,y) : & = \sigma^2 \int_0^1 L^{h,\delta}(x,t) 
L^{h,\delta}(y,t) dt\\
& = \sigma^2 \frac{h^2}{\delta^2} \sum_{k=1}^N \delta \frac{\astar D^- 
G^h_0(x;x_k)}{h} \frac{\astar D^- G^h_0(y;x_k)}{h}\left(\frac{\astar D^- 
U^0_k}{h}\right)^2.
\end{aligned}
\label{eq:Rhd}
\end{equation}
Recall the expression of $L^h_{11}$ in \eqref{eq:Lsplit}. We verify that 
the above quantity can be written as
\begin{equation*}
\sigma^2 \frac{h}{\delta} \int_0^1 L^h_{11}(x,t) L^h_{11}(y,t) dt.
\end{equation*}
Now the convergence in \eqref{eq:Rh1conv} implies that $R^{h,\delta}$ 
converges to the covariance function of $\sqrt{\frac{h}{\delta}}\U(x;W)$.  
This completes the proof of \eqref{eq:clth}.

{\em Long range media}. The expression for  $b^k_\eps - b^k_0$  are given above. Therefore, we can 
apply Propositions \ref{prop:msfemL} and \ref{prop:wcone} (with 
modifications), to show that as $\eps$ goes to zero while $h$ is fixed, the 
HMM corrector indeed converges to $\U^{h,\delta}_H(x;W^H)$ defined in 
\eqref{eq:clthl}.
When $h$ is sent to zero, we can follow the proof of Proposition 
\ref{prop:wctwo} and show that $\U^{h,\delta}_H$ converges in distribution 
to some Gaussian process. To find its expression, we calculate the covariance function of $\U^{h,\delta}_H$. Thanks to 
the isometry \eqref{eq:lito}, it is given by
\begin{equation}
R^{h,\delta}_H(x,y) : = \kappa \int_0^1\int_0^1 \frac{L^{h,\delta}(x,t) 
L^{h,\delta}(y,s)}{|t-s|^\alpha} dt ds.
\label{eq:RhdH}
\end{equation}
Using the expression of $L^{h,\delta}$, and the following short-hand 
notations:
\begin{equation*}
J_k(x) : = \frac{\astar D^- G^h_0(x;x_k)}{h}\frac{\astar D^- U^0_k}{h},
\end{equation*}
the covariance function can be written as
\begin{equation*}
\frac{\kappa h^2}{\delta^2}  \Big(\sum_{k=1}^N\sum_{m=1}^{k} [J_k(x)J_m(y) + 
J_m(x)J_k(y)] \int_{I^\delta_k} \int_{I^\delta_m}
\frac{dt ds}{|t-s|^\alpha} + \sum_{k=1}^N J_k(x)J_k(y)\int_{I^\delta_k} \int_{I^\delta_k}
\frac{dt ds}{|t-s|^\alpha} \Big). %\\
\end{equation*}
The integral of $|t-s|^{-\alpha}$ can be evaluated explicitly:
\begin{equation*}
\begin{aligned}
\frac{\kappa}{(1-\alpha)(2-\alpha)} 
\sum_{k=1}^N\sum_{m=1}^{k-1}
[J_k(x)J_m(y) + J_m(x)J_k(y)] \frac{h^2}{\delta^2} \Big( [(k-m)h + 
\delta]^{2-\alpha} + \\
 - 2[(k-m)h]^{2-\alpha}
+ [(k-m)h - \delta]^{2-\alpha}\Big)
%\\& 
+ \frac{\kappa}{(1-\alpha)(2-\alpha)} \sum_{k=1}^N 2J_k(x)J_k(y) 
\frac{h^2}{\delta^2} \delta^{2-\alpha}.
\end{aligned}
\end{equation*}
When $m < k$, the quantity between  parentheses
together with the $\delta^2$ on the denominator forms a centered difference 
approximation of the second order derivative of the function 
$r^{2-\alpha}$, evaluated at $(k-m)h$, i.e., at $t-s$. This 
derivative is precisely $(1-\alpha)(2-\alpha)|t-s|^{-\alpha}$. Meanwhile, the 
$h^2$ on the nominator can be viewed as the size of the measure $dt ds$ on 
each block $I_k \times I_m$.  Furthermore, $J_k(x)$ is precisely 
$L^h_{11}(x,t)$ evaluated on $I_k$.  The conclusion is: those terms in the 
above equation with $m < k$ form an approximation of
\begin{equation*}
\kappa \int_0^1 \int_0^1 \frac{L^h_{11}(x,t)L^h_{11}(y,s) }{|t-s|^\alpha} \ 
dt ds.
\end{equation*}
The second sum corresponds to the diagonal terms  $k=m$. Since $|J_k|$ is bounded, this sum is of order $O(h\delta^{-\alpha})$ and does not contribute in the limit as $h\to0$, as long as $\delta \gg h^{\frac 1 \alpha}$. $R^{h,\delta}_H$ converges to 
the covariance function of $\U_H(x;W^H)$, finishing the proof of 
\eqref{eq:clthl}.
\end{proof}

%%%%%%%%%%%%
%%%%%%%%%%%%
\section{A hybrid scheme that passes the corrector test}
\label{sec:disc}

We now present a method that eliminates the amplification effect of HMM with $\delta<h$
exhibited in item one of Theorem \ref{thm:hmm} when the random media has 
SRC. Such an effect arises because the fluctuation in the short-range 
averaging effects occurring on the interval of size $h$ are not properly 
captured by averaging occurring on an interval of size $\delta<h$. 

The main idea is to subdivide the element $I_k$ uniformly into $M$ smaller patches and perform $M$ independent calculations on each of these patches. This is a hybrid method that captures the idea of performing calculations on small intervals of size $\delta\ll h$ to reduce cost as in HMM while preserving the averaging property of MsFEM by solving the elliptic equation on the whole domain. 

 Let $\delta= h/M$ be the size of the  small patch $I^\ell_k$  for $1\leq \ell\leq M$. Define $b^k_{\eps \ell}$ by
\begin{equation}
b^k_{\eps \ell} = \left(\frac{\delta}{h}\right)^2 \left( \int_{I_k^\ell} 
a_\eps^{-1}(s) ds\right)^{-1}.
\label{eq:bkelldef}
\end{equation}
This definition is motivated by \eqref{eq:Aedex}. Given a function $w$ in the space $V^h_0$, we define its local 
projection into the space of oscillatory functions in the small patches 
$I_k^\ell$ by:
\begin{equation}
\left\{ \begin{aligned}
&\L_\eps (w^k_\ell) = 0, & & \quad x \in \cup_{k=1}^{N}\cup_{\ell=1}^M I_k^\ell,\\
& w^k_\ell = w, & & \quad x \in \cup_{k=1}^N \cup_{\ell=1}^M \partial 
I_k^\ell,
\end{aligned}
\right.
\label{eq:wkl}
\end{equation}
where $w^k_\ell$ denotes this local projection. Recall that
$\tilde{\phi}^k_0$ is the left piece of the hat basis function.
Integrations by parts show that $b^k_{\eps 
\ell}=A_\eps((\tilde{\phi}^k_0)_\ell, (\tilde{\phi}^k_0)_\ell)$,
where $A_\eps$ is the bilinear form defined in \eqref{eq:msfeml}. HMM 
choose one small patch $I^{\ell_*}_k$ and uses 
$A_\eps((\tilde{\phi}^k_0)_{\ell_*}, (\tilde{\phi}^k_0)_{\ell_*})=b^k_{\eps 
\ell_*}$ to approximate the value
$A_\eps(\tilde{\phi}^k_0, \tilde{\phi}^k_0)$. Of course, the scaling
$h/\delta$ is needed. This scaling factor turns out to amplify the variance 
as $h$ goes to zero when the random medium has SRC.

We modify the method of HMM by constructing $b_\eps$ as $b^k_\eps : = 
\textstyle \sum_{\ell=1}^M b^k_{\eps \ell}$. In other words, we define the 
stiffness matrix element $A^h_{\eps i-1 i}$ by $-\sum_{\ell=1}^M A_\eps( 
(\tilde{\phi}^i_0)_\ell, (\tilde{\phi}^i_0)_\ell )$.  With this definition, 
we verify that
\begin{equation*}
\begin{aligned}
b^k_\eps - b^k_0 &= \sum_{\ell = 1}^M \left(\frac{\delta}{h}\right)^2 
\Big[\Big(\int_{I_k^\ell} a_\eps^{-1} ds\Big)^{-1} - 
\Big(\int_{I_k^\ell} \astar^{-1} ds\Big)^{-1}\Big]\\
& = \sum_{\ell=1}^M \Big(\frac{\astar}{h}\Big)^2 \left[- \int_{I_k^\ell} 
q_\eps(s) ds + \Big(\int_{I_k^\ell} q_\eps(s) ds\Big)^2 
\Big(\int_{I_k^\ell} a_\eps^{-1}  ds\Big)^{-1}\right]
\end{aligned}
\end{equation*}
Rewriting the sum of the first terms in the parenthesis, we obtain
\begin{equation*}
b^k_\eps - b^k_0 = -\left(\frac{\astar}{h}\right)^2\int_{I_k} q_\eps(s) ds + 
r^k_\eps,
\end{equation*}
where $r^k_\eps$ accounts for the sum over the second terms in the 
parenthesis. Clearly, $\E|r^k_\eps| \le C\eps (h \delta)^{-1}$. This 
decomposition of $b_\eps - b_0$ and the estimate of $r^k_\eps$ shows that 
we can apply Proposition \ref{prop:msfemL} to obtain the decomposition of the 
corrector. The $L^h(x,t)$ function in this case will be $L^h_{11}(x,t)$ in 
\eqref{eq:Lsplit}. Then it follows from Propositions \ref{prop:wcone} and 
\ref{prop:wctwo} that the corrector in this method converges to the right 
limit.

In this modified method, all the local informations on $I_k$ 
are used to construct $b^k_\eps$ as in MsFEM. The main advantage is that the 
computation on $\{I_k^\ell\}_{\ell = 1}^M$ can be done in a parallel 
manner. The calculation in MsFEM performed on a whole domain of size $h$ is replaced by $h/\delta$ independent calculations. Accounting for the coupling between the $h/\delta$ subdomains is necessary in MsFEM. It is no longer necessary in the modified method, which significantly reduces is complexity.

%%%%%%%%%%%%%%
%%%%%%%%%%%%%%
\section{Conclusions and perspectives}
\label{sec:conclusion}

This paper analyzes the theory of random fluctuations for several multi-scale algorithms such as MsFEM and HMM in the simplified setting of a one-dimensional elliptic equation. One of our main results is that MsFEM and HMM with $\delta=h$ correctly capture the random fluctuations beyond the homogenization limit when the random media satisfy the short-range correlation (SRC) assumptions or the long-range correlation (LRC) assumptions considered in this paper. Such schemes pass the corrector test we have introduced and are therefore likely to be reliable in the analysis and the assessment of uncertainty quantifications even in settings where a well-understood theory of random fluctuations is not available. 

We have also shown that less expensive schemes such as HMM with $\delta<h$ still capture the random fluctuations in media with LRC but may fail to capture them in media with SRC. The reason is that media with LRC display self-similarities across scales. As a consequence, the properties of the random fluctuations are correctly captured at the macroscopic scale $h$. On the other hand, media with SRC display averaging effects at the scale $\eps\ll h$. Numerical schemes thus need to solve the equation at the microscopic scale on the whole domain in order to capture such effects. This forces the choice $\delta=h$ in the standard HMM scheme and led us to the modification of the MsFEM and HMM schemes presented in section \ref{sec:disc} to pass the corrector test for both LRC and SRC media.

The random fluctuations considered here all have a Gaussian structure and follow from an application of the central limit theorem for SRC media and a different but similar averaging procedure for LRC. Random fluctuations corresponding to rare events have to be analyzed with different methods. We refer the reader to \cite{BGL-JDE-11} for the one dimensional elliptic equation in the setting of large deviation theory.

The theory presented in this paper is restricted to the one-dimensional case. The main reason is that the theory of random fluctuations is well understood only in one dimension; see \cite{BGMP-AA-08,BP-99}. In this setting, the random elliptic solution can be written as a stochastic integral whose analysis follows from fairly standard techniques. Note that the discretized solution solves a discrete system whose inversion is not explicit. This explains in large part why the results presented in this paper are rather long and technical. 

In higher dimensions, only specific equations are amenable to fluctuations analyses; see \cite{B-CLH-08,FOP-SIAP-82}. For such equations, the homogenization limit is trivial. Unlike the case considered in this paper, purely deterministic discretizations and basis functions based on the unperturbed elliptic equation provide accurate numerical solutions. Such models thus do not capture the difficulties inherent to the simulation of elliptic equations. They do not allow us to analyze, for instance, the resonance effects observed for MsFEM schemes and the over-sampling techniques designed to eliminate such effects \cite{EHW-SINUM-00,HW-JCP-97,HWC-MC-99}. The theory of  fluctuations and its effect on multi-scale algorithms remains an open problem in such a setting.

Even though the one dimensional equation is of purely academic interest, the analyses presented in this paper quantify the influence of using less expensive numerical schemes on the accuracy of the numerical solutions. In the case of SRC, random fluctuations are captured only by fairly expensive schemes that solve the elliptic equation on the whole domain of interest. In the case of LRC, self-similarity properties allow us to use much coarser schemes without sacrificing accuracy. These guiding rules should prevail for a large class of equations, elliptic and non-elliptic, linear and nonlinear, and hopefully in practical settings where a well-controlled theory of random fluctuations is not available.

\section*{Acknowledgments} The authors acknowledge very fruitful discussions with Weinan E on multi-scale algorithms and thank the reviewers for a thorough reading of the manuscript and remarks that helped with the presentation of the results. This work was partially funded by NSF grant DMS-0804696.

%%%%%%%%%%%%
%%%%%%%%%%%%
\appendix
\section{Several useful lemmas}
\subsection{Two moments}

We first record two key estimates on oscillatory integrals, which we have 
used already. The first one accounts for random media with SRC.
\begin{lemma}
\label{lem:oint}
Let $q(x,\omega)$ be a mean-zero stationary random process with integrable 
correlation function $R(x)$. Let $[a, b]$ and $[c, d]$ be two intervals on 
$\R$ and assume $b -a \le d -c$.  Then
\begin{equation}
\left\lvert \E \int_a^b \int_c^d q_\eps(t) q_\eps(s) dt ds \right\rvert \le 
\eps (b-a)\|R\|_{1,\R}.
\end{equation}
\end{lemma}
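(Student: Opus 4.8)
The plan is to reduce the problem to the two-point correlation function and then perform a single change of variables. First I would use stationarity of $q$ to compute the integrand's expectation explicitly: since $\E\{q(y)q(y+x)\} = R(x)$, rescaling the spatial argument gives $\E\{q_\eps(t)q_\eps(s)\} = R\big((t-s)/\eps\big)$. Because both intervals are bounded, Fubini lets me pull the expectation inside the double integral, so the quantity to estimate becomes the purely deterministic integral $\int_a^b\int_c^d R\big((t-s)/\eps\big)\,dt\,ds$.

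The key step is the choice of the order of integration, which is dictated by the hypothesis $b-a \le d-c$. I would integrate over the longer interval first. Fixing $t$ and substituting $u = (t-s)/\eps$ in the inner integral over $s \in [c,d]$ produces a factor $\eps$ and turns the domain into a bounded subinterval of $\R$; bounding the integrand by $|R|$ and enlarging the range to all of $\R$ then yields $\big|\int_c^d R((t-s)/\eps)\,ds\big| \le \eps\|R\|_{1,\R}$, a bound that is uniform in $t$ and, crucially, independent of the length $d-c$. The integrability of $R$ is exactly what makes this extension to the whole line harmless.

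Finally I would integrate this uniform bound over the shorter interval $[a,b]$, which contributes the factor $b-a$ and produces the claimed estimate $\eps(b-a)\|R\|_{1,\R}$. There is no genuine obstacle in the argument; the only point requiring care is to integrate the $s$-variable over $[c,d]$ \emph{before} the $t$-variable over $[a,b]$, so that the smaller length $b-a$ rather than $d-c$ appears in the final bound. This is precisely what the assumption $b-a \le d-c$ is there to guarantee, and it is the reason the lemma is stated with that orientation of the two intervals.
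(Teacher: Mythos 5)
Your proof is correct and is essentially the paper's own argument: the paper performs the same change of variables $(t,s)\mapsto(t,(t-s)/\eps)$ (written as a two-dimensional substitution with indicator functions rather than your iterated Fubini form), picks up the Jacobian factor $\eps$, bounds by $\|R\|_{1,\R}$, and gets $b-a$ from the remaining $t$-integration. One cosmetic remark: the hypothesis $b-a\le d-c$ is not needed for the argument to go through --- integrating out $s$ always yields the bound $\eps(b-a)\|R\|_{1,\R}$ --- it merely ensures that the stated estimate involves the shorter of the two lengths, i.e., that the bound is the sharper of the two available ones.
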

\begin{proof}
Let $T$ denotes the expectation of the double integral. It has the 
following expression:
$$
T = \int_a^b\int_c^d R(\frac{t -s}{\eps}) dt ds =
 \int_{\R}\int_{\R} R(\frac{t-s}{\eps}){\bf 1}_{[a,b]}(t){\bf 1}_{[c, 
d]}(s) dt ds.
$$
We change variables by setting $t \to t$ and $(t-s)/\eps \to s$. The Jacobian 
of this change of variables is $\eps$. Then we have
$$
|T| \le \eps \int_{\R}\int_{\R} |R(s)| {\bf 1}_{[a,b]}(t) dt ds = 
\eps(b-a)\|R\|_{1, \R}.
$$
This completes the proof.
\end{proof}

The second one accounts for a special family of random media with LRC.
The proof is adapted from \cite{BGMP-AA-08}.
\begin{lemma}
\label{lem:ointl}
Let $q(x,\omega)$ be defined as in {\upshape (L1)-(L3)}.  Let $F$ be a 
function in the space $L^\infty(\R)$. Let $(a,b)$ and $(c,d)$ be two open 
intervals and assume $b-a \le d-c$. Then we have
\begin{equation}
\left| \E \int_a^b \int_c^d q({\frac t \eps})q({\frac s \eps}) F(t)F(s) dt 
ds \right| \le C \eps^{\alpha}(b-a)(d-c)^{1-\alpha}.
\label{eq:ointl}
\end{equation}
The constant $C$ above depends only on $\kappa$, $\alpha$ and 
$\|F\|_{\infty,\R}$.
\end{lemma}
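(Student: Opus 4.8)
The plan is to reduce the estimate to a purely deterministic bound on a double integral of $|t-s|^{-\alpha}$, in the same spirit as the proof of Lemma~\ref{lem:oint}, except that for LRC one keeps the slowly decaying tail of the correlation function rather than its (infinite) $L^1$ norm. First I would use the stationarity of $q$ to write the expectation explicitly as
\[
T := \E \int_a^b \int_c^d q(\tfrac t\eps)\, q(\tfrac s\eps)\, F(t) F(s)\, dt\, ds = \int_a^b \int_c^d R\Big(\frac{t-s}{\eps}\Big) F(t) F(s)\, dt\, ds,
\]
where $R$ is the correlation function of $q$ defined in \eqref{eq:Rdef}.

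The key analytic input is a \emph{pointwise} power-decay bound on $R$, namely $|R(x)| \le C_R\, |x|^{-\alpha}$ for all $x \neq 0$, with $C_R$ controlled by $\kappa$ and $\alpha$. I would obtain this by patching two facts. For large $|x|$ it is exactly the asymptotic $R(x) \sim \kappa |x|^{-\alpha}$ recorded below \eqref{eq:kappa}. For $|x| \le 1$ it follows from the crude bound $|R(x)| \le R(0) \le q_0^2$ coming from (L2), since there $|x|^{-\alpha} \ge 1$ and hence $|R(x)| \le R(0)\,|x|^{-\alpha}$. Taking $C_R$ to be the maximum of these two constants gives one inequality valid on all of $\R\setminus\{0\}$. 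Substituting the rescaled argument yields $|R((t-s)/\eps)| \le C_R\,\eps^{\alpha}\,|t-s|^{-\alpha}$.

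Then I would pull out $\|F\|_{\infty,\R}^2$ and estimate the remaining integral. Because $\alpha<1$, the singularity is integrable: for every fixed $t$ one has $\int_c^d |t-s|^{-\alpha}\, ds \le \frac{2}{1-\alpha}(d-c)^{1-\alpha}$ uniformly in $t$, the worst case being $t$ sitting inside $(c,d)$. Integrating this over $t\in(a,b)$ produces $\frac{2}{1-\alpha}(b-a)(d-c)^{1-\alpha}$, which is precisely the claimed form; here the hypothesis $b-a\le d-c$ guarantees that integrating over the longer interval $(c,d)$ first gives the sharper of the two symmetric orderings. Combining the three displays gives $|T| \le C\,\eps^{\alpha}(b-a)(d-c)^{1-\alpha}$ with $C = \tfrac{2}{1-\alpha}\,C_R\,\|F\|_{\infty,\R}^2$.

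The only delicate step is the uniform decay bound on $R$: the $|x|\to\infty$ behavior is governed by $\kappa$ through \eqref{eq:kappa}, but it must be glued to the near-origin regime, where the relevant constant is $R(0)\le q_0^2$ and is therefore controlled by the ellipticity constants rather than by $\kappa$ alone. Keeping track of this is what justifies asserting that the final constant depends only on $\kappa$, $\alpha$, and $\|F\|_{\infty,\R}$ (together with the fixed background ellipticity). Everything downstream is the same integrable-singularity computation that underlies the fractional Brownian motion estimates used elsewhere in the paper.
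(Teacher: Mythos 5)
Your proof is correct, but it takes a genuinely different --- and for the lemma as literally stated, arguably cleaner --- route than the paper. The paper never dominates $R$ pointwise; instead it fixes $\delta>0$, splits the $(t,s)$-domain into three regions according to $|t-s|\le T_\delta\eps$, $T_\delta\eps<|t-s|\le 1$, and $|t-s|>1$, shows that $\eps^{-\alpha}$ times the expectation converges as $\eps\to0$ to $\kappa\int F_1(t)F_2(s)|t-s|^{-\alpha}\,dt\,ds$ (this is \eqref{eq:lkey1}), and only then bounds that limit via the Hardy--Littlewood--Sobolev inequality with exponents $1$ and $(1-\alpha)^{-1}$ (this is \eqref{eq:lkey2}), which is where the factor $(b-a)(d-c)^{1-\alpha}$ and the hypothesis $b-a\le d-c$ enter. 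Your route --- a global pointwise bound $|R(x)|\le C_R|x|^{-\alpha}$ followed by the elementary estimate $\int_c^d|t-s|^{-\alpha}\,ds\le\tfrac{2}{1-\alpha}(d-c)^{1-\alpha}$, uniform in $t$ --- replaces HLS by a direct computation and produces a bound valid for \emph{every} $\eps>0$, uniformly over the intervals; the paper's proof, read strictly, controls the $\eps\to0$ limit together with error terms whose constants involve $T_\delta$ and $\|R\|_{\infty}$, so in this respect your argument matches the statement of the lemma more directly. What the paper's longer argument buys is the exact limit \eqref{eq:lkey1}, i.e., the identification of the limiting covariance in the form needed for \eqref{eq:cltbg}, which your upper bound alone does not give. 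Two points to tighten in your write-up: first, your two-case gluing formally leaves the intermediate range $1\le|x|\le T$ uncovered (between where you invoke $|R(x)|\le R(0)\le q_0^2$ and where the asymptotic $R(x)\sim\kappa|x|^{-\alpha}$ has actually set in); it is closed by the same boundedness argument, since there $|R(x)|\le q_0^2\le q_0^2\,T^{\alpha}|x|^{-\alpha}$. Second, as you correctly flag, the resulting $C_R$ depends on the onset threshold $T$ and on $q_0$, not on $\kappa$ and $\alpha$ alone --- but this caveat is equally present in the finite-$\eps$ error terms of the paper's own proof, so your explicit acknowledgment of it is, if anything, the more careful reading of the constant's dependence.
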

\begin{proof} By the definition of the correlation function $R$, we have
$$
\E\Big\{\frac{1}{\eps^{\alpha}} \int_a^b \int_c^d q(\frac{t}{ 
\eps})q(\frac{s}{\eps}) F(t)F(s) dt ds \Big\} = \int_{\R^2} \eps^{-\alpha} 
R(\frac{t-s}{\eps}) F(t){\bf 1}_{[a,b]}(t) F(s){\bf 1}_{[c,d]}(s) dt ds.
$$
As shown in \cite{BGMP-AA-08}, $R(\tau)$ is asymptotically 
$\kappa\tau^{-\alpha}$ with $\kappa$ defined in \eqref{eq:kappa}.  We 
expect to replace $R$ by $\kappa \tau^{-\alpha}$ in the limit.  Therefore, 
let us consider the difference
\begin{equation*}
\int_{\R^2} \left| \eps^{-\alpha} R(\frac{t-s}{\eps}) - 
\frac{\kappa}{|t-s|^{\alpha}} \right| |F(t)|{\bf 1}_{[a,b]}(t) |F(s)|{\bf 
1}_{[c,d]}(s) dt ds.
\end{equation*}
By the asymptotic relation $R \sim \kappa \tau^{-\alpha}$, we have for any 
$\delta > 0 $, the existence of $T_\delta$ such that
\begin{math}
|R(\tau) - \kappa \tau^{-\alpha}| \le \delta \tau^{-\alpha}.
\end{math}
Accordingly, we decompose the domain of integration into three subdomains:
\begin{equation*}
\begin{aligned}
D_1 & = \{(t,s) \in \R^2, |t-s| \le T_\delta \eps\},\\
D_2 & = \{(t,s) \in \R^2, T_\delta \eps < |t-s| \le 1\},\\
D_3 & = \{(t,s) \in \R^2, 1 < |t-s|\}.
\end{aligned}
\end{equation*}
On the first domain, we have
\begin{equation*}
\begin{aligned}
\int_{D_1} & \left|  \eps^{-\alpha} R(\frac{t-s}{\eps}) - 
\frac{\kappa}{|t-s|^{\alpha}} \right| |F(t)|{\bf 1}_{[a,b]}(t) |F(s)|{\bf 
1}_{[c,d]}(s) dt ds\\
& \le \int_{D_1} \left| \eps^{-\alpha} R(\frac{t-s}{\eps})\right||F_1(t)| 
|F_2(s)| dt ds + \int_{D_1} \left| \frac{\kappa}{|t-s|^{\alpha}} \right| 
|F_1(t)||F_2(s)| dt ds.
\end{aligned}
\end{equation*}
Here and below, we use the short hand notation $F_1(t) = F(t){\bf 
1}_{[a,b]}(t)$ and $F_2(s) = F(s){\bf 1}_{[c,d]}(s)$. The above integrals 
are then bounded by
\begin{equation*}
\begin{aligned}
& \eps^{-\alpha}\|R\|_{\infty,\R} \int_a^b |F(t)| \int_{t-T_\delta \eps}^{t 
+ T_\delta \eps} |F_2(s)| ds dt + \int_a^b |F(t)| \int_{-T_\delta 
\eps}^{T_\delta \eps} \kappa |s|^{-\alpha} |F_2(t-s)| ds dt\\
\le & \|F\|_{\R,\infty}^2  \Big(2T_\delta \|R\|_{\R,\infty} + \frac{2\kappa 
T_\delta^{1-\alpha}}{1-\alpha} \Big) \eps^{1-\alpha}.
\end{aligned}
\end{equation*}

On domain $D_2$, we have
\begin{equation*}
\begin{aligned}
\int_{D_2} & \left|  \eps^{-\alpha} R(\frac{t-s}{\eps}) - 
\frac{\kappa}{|t-s|^{\alpha}} \right| |F_1(t)| |F_2(s)|dt ds
\le \delta \int_{D_2} |t-s|^{-\alpha} |F_1(t)| |F_2(s)| dt ds 
\\
& \le 2 \delta \int_a^b |F(t)| \int_{T_\delta \eps}^1 |s|^{-\alpha} 
|F_2(t-s)| ds dt \le \frac{2 \delta \|F\|_{\infty,\R}^2}{1-\alpha} 
(1+T_\delta^{1-\alpha}\eps^{1-\alpha}).
\end{aligned}
\end{equation*}

On domain $D_3$, we can bound $|t-s|^{-\alpha}$ by one, and we have
\begin{equation*}
\begin{aligned}
\int_{D_3} & \left|  \eps^{-\alpha} R(\frac{t-s}{\eps}) - 
\frac{\kappa}{|t-s|^{\alpha}} \right| |F_1(t)| |F_2(s)|dt ds
\le \delta \int_{D_3} |t-s|^{-\alpha} |F_1(t)| |F_2(s)| dt ds \\
& \le 2 \delta \int_a^b |F(t)| \int_{T_\delta \eps}^1 |F(t-s)| ds dt \le 
2\delta\|F\|_{\infty,\R}^2 (1+T_\delta \eps).
\end{aligned}
\end{equation*}
Therefore, for some constant $C$ that does not depend on $\eps$ or 
$\delta$, we have
\begin{equation*}
\limsup_{\eps \to 0} \eps^{-\alpha} \left| \E\int_a^b\int_c^d q({\frac t 
\eps})q({\frac s \eps}) F_1(t) F_2(s)- \int_{\R^2} 
\frac{\kappa}{|t-s|^{\alpha}}F_1(t) F_2(s) \right| \le C 
\|F\|_{\infty,\R}^2 \delta.
\end{equation*}
Sending $\delta$ to zero, we see that
\begin{equation}
\lim_{\eps \to 0} \eps^{-\alpha}  \E\int_a^b\int_c^d q({\frac t 
\eps})q({\frac s \eps}) F_1(t) F_2(s) dt ds =  \int_{\R^2} 
\frac{\kappa}{|t-s|^{\alpha}}F_1(t) F_2(s) dt ds.
\label{eq:lkey1}
\end{equation}

Finally, from the Hardy-Littlewood-Sobolev inequality \cite[\S 4.3]{LL}, we 
have
\begin{equation}
\left|\int_{\R^2} \frac{F_1(t)F_2(s)}{|t-s|^{\alpha}} dt ds \right| \le 
C\|F_1\|_{1,\R}\|F_2\|_{(1-\alpha)^{-1},\R} \le 
C\|F\|^{2}_{\infty}(b-a)(d-c)^{1-\alpha}.
\label{eq:lkey2}
\end{equation}
This completes the proof.
\end{proof}
\subsection{Moment bound for stochastic process}

In this section we provide a bound for the fourth order moment of 
$q(x,\omega)$ in terms of the $L^1$ norm of the $\rho$-mixing coefficient.  

Let $\mathcal{P}$ be the set of all ways of choosing pairs of points in $\{1,2,3,4\}$, i.e.,
\begin{equation}
\mathcal{P} : = \Big\{ p = \big\{ \{p(1), p(2)\}, \{p(3), p(4)\} \big\} 
~\Big|~ p(i) \in \{1,2,3,4\} \Big\}.
\label{eq:sPdef}
\end{equation}
There are $C_6^2 = 15$ elements in $\mathcal{P}$.

\begin{lemma}
\label{lem:fmt}
Let $q(x,\omega)$ be a stationary 
mean-zero stochastic process. Assume $\E |q(0)|^4$ is finite and
$q(x,\omega)$ is $\rho$-mixing with mixing coefficient $\rho(r)$ that is 
decreasing in $r$.  Then we have
\begin{equation}
|\E \{\prod_{i=1}^4 q(x_i) \}| \le \E|q(0)|^4 \sum_{p \in \mathcal{P}} 
\rho^{\frac 1 2} (|x_{p(1)} - x_{p(2)}|) \rho^{\frac 1 2}(|x_{p(3)} - 
x_{p(4)}|).
\label{eq:fmt}
\end{equation}
\end{lemma}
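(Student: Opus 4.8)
The plan is to reduce the fourth moment to covariances of \emph{pairwise} products, so that the $\rho$-mixing inequality \eqref{eq:rhodef} can be applied with factors that are merely square-integrable rather than higher-integrable. Since the right-hand side of \eqref{eq:fmt} is invariant under permutations of the four points, I would first relabel so that $x_1 \le x_2 \le x_3 \le x_4$ and introduce the consecutive gaps $d_1 = x_2-x_1$, $d_2 = x_3 - x_2$, $d_3 = x_4-x_3$, together with the shorthand $q_i := q(x_i)$. The central algebraic identity is that for any splitting of $\{1,2,3,4\}$ into two pairs $\{i,j\},\{k,l\}$ one has $\E\{q_1q_2q_3q_4\} = \mathrm{Cov}(q_iq_j,\,q_kq_l) + \E\{q_iq_j\}\,\E\{q_kq_l\}$.

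The key choice is which pairing to use. I would always take a \emph{balanced} two-two split, selecting the one whose blocks are separated by the larger distance: the contiguous pairing $\{1,2\}\,|\,\{3,4\}$ (blocks separated by $d_2$) when $d_2 \ge \min(d_1,d_3)$, and the nested pairing $\{1,4\}\,|\,\{2,3\}$ (blocks separated by $\min(d_1,d_3)$) otherwise. In either case the separation is at least the median of $\{d_1,d_2,d_3\}$, and one applies \eqref{eq:rhodef} with $\xi,\eta$ the two pair-products, taking $A,B$ to be (possibly disconnected) Borel sets realizing that separation. Because each block has exactly two points, the variance factors obey $\Var(q_iq_j)^{\frac12} \le \E(q_i^2 q_j^2)^{\frac12} \le (\E|q(0)|^4)^{\frac12}$ by Cauchy--Schwarz and stationarity, so the covariance term is bounded by $\rho(\mathrm{sep})\,\E|q(0)|^4$; meanwhile each factor of the product term is a two-point correlation controlled by $|R(\tau)| \le \rho(\tau)\,\E|q(0)|^2 \le \rho(\tau)\,(\E|q(0)|^4)^{\frac12}$, using \eqref{eq:Rdef}.

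It then remains to dominate $\rho(\mathrm{sep})$ and the product of two correlations by summands of $\sum_{p\in\mathcal{P}}$. For the product term this is immediate from $\rho \le 1$ (which follows from Cauchy--Schwarz in \eqref{eq:rhodef}): for instance $\rho(d_1)\rho(d_3) \le \rho^{\frac12}(d_1)\rho^{\frac12}(d_3)$, which is precisely the summand indexed by the matching $\{\{1,2\},\{3,4\}\}$. For the covariance term I would exploit that $\rho$ is decreasing: writing $\rho(\mathrm{sep}) = \rho^{\frac12}(\mathrm{sep})\rho^{\frac12}(\mathrm{sep})$ and pairing the separating gap with a \emph{shorter} gap yields a summand that is $\ge \rho(\mathrm{sep})$. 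For example, in the contiguous case with $d_1 \le d_2$ one gets $\rho(d_2) \le \rho^{\frac12}(d_2)\rho^{\frac12}(d_1)$, the summand for $\{\{2,3\},\{1,2\}\}$; in the nested case with $d_3 \le d_1$ and $d_2 \le d_3$ one gets $\rho(d_3) \le \rho^{\frac12}(d_3)\rho^{\frac12}(d_2)$, the summand for $\{\{3,4\},\{2,3\}\}$, while the product term $|R(d_1{+}d_2{+}d_3)|\,|R(d_2)|$ is bounded by the summand for $\{\{1,4\},\{2,3\}\}$. Exhibiting two \emph{distinct} such pairings in each case suffices, since all $15$ summands are nonnegative, so the remaining terms only help.

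The main obstacle is precisely the temptation to split across the single largest gap: when that gap is $d_1$ or $d_3$ this forces a one-versus-three block, and the mixing inequality then requires controlling $\Var(q_2q_3q_4)^{\frac12} = \E(q_2^2q_3^2q_4^2)^{\frac12}$, a \emph{sixth} moment unavailable under the stated hypotheses. The resolution, and the crux of the argument, is to refuse the unbalanced split and instead always use one of the two balanced pairings above, whose best separation is $\max\{d_2,\min(d_1,d_3)\}\ge$ the median gap; the delicate point is then bookkeeping the direction of monotonicity, always pairing the separating gap against a shorter one so that the decreasing $\rho$ produces a \emph{lower} bound on the matching summand.
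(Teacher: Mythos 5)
Your proof is correct and follows essentially the same route as the paper's: both isolate a balanced two--two pairing (yours via sorted gaps and the contiguous/nested case analysis, the paper via the globally shortest of the six segments together with its complementary pair --- in fact the same split), bound the covariance term through \eqref{eq:rhodef} with fourth-moment variance estimates from Cauchy--Schwarz and stationarity, and dominate both the covariance term and the product of pair correlations by two distinct nonnegative summands of $\sum_{p \in \mathcal{P}}$ using the monotonicity of $\rho$ and the fact that $\rho \le 1$. The one-versus-three pitfall you flag (which would require a sixth moment) is precisely the obstruction the paper's choice of the shortest segment and its complement implicitly avoids.
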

\begin{proof}
Given four points $\{q(x_i)\}$, $i = 1, \cdots, 4$, we can draw six line 
segments joining them. Among these line segment there is one that has the 
shortest length. Rearranging the indices if necessary, we assume it is the 
one joining $x_1$ and $x_2$. Then set $A = \{x_1, x_2\}$ and $B = \{x_3, 
x_4\}$. Rearranging the indices among each set if necessary, we assume also 
that $d(A, B)$ is obtained by $|x_1 - x_3|$. Then by the definition of 
$\rho$-mixing, we have
\begin{equation}
|\E\{\prod_{i = 1}^4 q(x_i)\} - R(x_1 - x_2)R(x_3 - x_4)| \le 
\mathrm{Var}\{ q(x_1)q(x_2)\}^{\frac 1 2} 
\mathrm{Var}\{q(x_3)q(x_4)\}^{\frac 1 2} \rho(|x_1 - x_3|).
\end{equation}
We can bound $\mathrm{Var}\{q(x_1)q(x_2)\}$, and similarly the variance of 
$\mathrm{Var}\{q(x_3)q(x_4)\}$, from above by $(\E|q(x_1)|^4 
\E|q(x_2)|^4)^{1/2}$.  Therefore, the expression above can be bounded by $\E |q(0)|^4 
\rho (|x_1 - x_3|)$.

Since $\rho$ is decreasing and $|x_1 - x_3| \ge |x_1 - x_2|$, we also have
\begin{equation}
|\E\{\prod_{i = 1}^4 q(x_i)\} - R(x_1 - x_2)R(x_3 - x_4)| \le \E|q(0)|^4 
\rho(|x_1 - x_2|).
\end{equation}
Now observe that $\min\{a,b\} \le (ab)^{\frac 1 2}$ for any two 
non-negative real numbers $a$ and $b$. Applying this observation to the bounds 
of the two inequalities above, and using the triangle inequality, we obtain 
\begin{equation}
|\E\{\prod_{i = 1}^4 q(x_i)\}| \le |R(x_1 - x_2)|\cdot|R(x_3 - x_4)| + \E|q(0)|^4 \rho^{\frac 1 2}(|x_1 - x_2|) \rho^{\frac 1 2}(|x_1 - x_3|).
\end{equation}
Using the definition of mixing 
again, we obtain that $|R(x_1 - x_2)| = |\E q(x_1)q(x_2)|$ is bounded by
$$
\mathrm{Var}^{\frac 1 2}(q(x_1)) 
\mathrm{Var}^{\frac 1 2}(q(x_2))\rho(|x_1 - x_2|) \le (\E|q(0)|^4)^{\frac 1 
2}\rho^{\frac 1 2}(|x_1 - x_2|).
$$
In the last step, we used the fact that $\rho \le \rho^{\frac 1 2}$ since 
$\rho$ can always be chosen no larger than 1. We can bound $R(x_1 - x_3)$ 
in the same way.  Therefore, we obtain
\begin{equation}
|\E\{\prod_{i = 1}^4 q(x_i)\}| \le  \E|q(0)|^4 \Big[ \rho^{\frac 1 2}(|x_1 
- x_2|) \rho^{\frac 1 2}(|x_3 - x_4|) + \rho^{\frac 1 2}(|x_1 - x_2|) 
\rho^{\frac 1 2}(|x_1 - x_3|)\Big].
\end{equation}
This completes the proof.
\end{proof}

We now  derive a bound for the fourth order moment of oscillatory 
integrals of $q_\eps$.
\begin{lemma}
\label{lem:fmtint}
Let $q(x,\omega)$ satisfy the conditions in the previous lemma. Assume in 
addition that the mixing coefficient satisfies that $\|\rho^{\frac 1 
2}\|_{1,\R_+}$ is finite. Let $(x,y)$ be an interval in $\R$. Then for any 
bounded function $m(t)$, we have
\begin{equation}
\E\left(\frac{1}{\sqrt{\eps}} \int_x^y q(\frac{t}{\eps}) m(t) dt \right)^4 
\le 60 \E|q(0)|^4 \cdot \|\rho^{\frac 1 2}\|_{1, \R_+}^2 \|m\|_{\infty}^4 
\cdot |x-y|^2.
\label{eq:fmtint}
\end{equation}
\end{lemma}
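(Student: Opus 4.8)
The plan is to expand the fourth power as a quadruple integral over $[x,y]^4$ and feed it into the pointwise fourth-moment bound of Lemma~\ref{lem:fmt}. Bounding $|m|$ by $\|m\|_\infty$, I would start from
\begin{equation*}
\E\left(\frac{1}{\sqrt{\eps}}\int_x^y q(\tfrac{t}{\eps})m(t)\,dt\right)^4 = \frac{1}{\eps^2}\int_{[x,y]^4} \E\Big[\prod_{i=1}^4 q(\tfrac{t_i}{\eps})\Big]\prod_{i=1}^4 m(t_i)\,dt_i \le \frac{\|m\|_\infty^4}{\eps^2}\int_{[x,y]^4}\Big|\E\prod_{i=1}^4 q(\tfrac{t_i}{\eps})\Big|\,dt_1\cdots dt_4 .
\end{equation*}
Applying Lemma~\ref{lem:fmt} at the points $x_i = t_i/\eps$ replaces the integrand by $\E|q(0)|^4$ times the sum over the $15$ elements $p \in \mathcal{P}$ of $\rho^{1/2}(|t_{p(1)}-t_{p(2)}|/\eps)\,\rho^{1/2}(|t_{p(3)}-t_{p(4)}|/\eps)$.

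Next I would bound, uniformly in the pairing $p$, the quadruple integral of each summand. The single estimate that drives everything is obtained by the change of variables $u=(t-s)/\eps$:
\begin{equation*}
\int_x^y \rho^{1/2}\!\left(\frac{|t-s|}{\eps}\right) ds \le \int_\R \rho^{1/2}\!\left(\frac{|t-s|}{\eps}\right) ds = \eps \int_\R \rho^{1/2}(|u|)\,du = 2\eps\,\|\rho^{1/2}\|_{1,\R_+}.
\end{equation*}
For a pairing into two vertex-disjoint segments the quadruple integral factors into two copies of $\int_{[x,y]^2}\rho^{1/2}(|t-s|/\eps)\,dt\,ds$, each at most $2\eps\|\rho^{1/2}\|_{1,\R_+}|x-y|$ by the displayed bound, giving $4\eps^2\|\rho^{1/2}\|_{1,\R_+}^2|x-y|^2$. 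For a pairing whose two segments share a vertex, one of the four variables is absent from the integrand and contributes only a factor $|x-y|$; integrating the two variables attached to the shared vertex against $\rho^{1/2}$ yields a factor $2\eps\|\rho^{1/2}\|_{1,\R_+}$ each, and the shared variable contributes a final $|x-y|$, for the same bound $4\eps^2\|\rho^{1/2}\|_{1,\R_+}^2|x-y|^2$.

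The main thing to check carefully is precisely this uniformity: that every one of the $15$ terms, whether its two pairs are vertex-disjoint or share a vertex, is controlled by the same quantity $4\eps^2\|\rho^{1/2}\|_{1,\R_+}^2|x-y|^2$. Once that is in hand the proof closes immediately: summing over the $15$ pairings and cancelling the prefactor $\eps^{-2}$ gives
\begin{equation*}
\E\left(\frac{1}{\sqrt{\eps}}\int_x^y q(\tfrac{t}{\eps})m(t)\,dt\right)^4 \le \frac{\|m\|_\infty^4}{\eps^2}\,\E|q(0)|^4 \cdot 15 \cdot 4\eps^2\|\rho^{1/2}\|_{1,\R_+}^2|x-y|^2 ,
\end{equation*}
which is exactly \eqref{eq:fmtint} with constant $60 = 15\cdot 4$. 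Beyond the bookkeeping, the only subtlety is the factor $2$ arising because $\rho$ is integrated over all of $\R$ while $\|\rho^{1/2}\|_{1,\R_+}$ is a half-line norm; tracking it is what pins down the stated constant $60$.
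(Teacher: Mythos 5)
Your proposal is correct and follows essentially the same route as the paper: expand the fourth moment into a quadruple integral, invoke Lemma~\ref{lem:fmt} at $x_i=t_i/\eps$, and bound each of the $15$ pairing terms by integrating each $\rho^{1/2}$ factor over one variable (gaining $2\eps\|\rho^{\frac12}\|_{1,\R_+}$ per factor, with the remaining two variables contributing $|x-y|^2$), yielding the constant $60 = 15\cdot 4$. The only cosmetic difference is that the paper performs an explicit change of variables with Jacobian $\eps^2$ on one representative shared-vertex term and asserts uniformity, whereas you make the vertex-disjoint versus shared-vertex case distinction explicit --- a slightly more complete bookkeeping of the same argument.
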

\begin{proof} The left hand side of the desired inequality is
\begin{equation}
I = \frac{1}{\eps^2} \int_x^y \int_x^y\int_x^y\int_x^y \E \prod_{i=1}^4 
q(\frac{t_i}{\eps}) \prod_{i=1}^4 m(t_i) d[t_1 t_2 t_3 t_4].
\end{equation}
Here and below, $d[t_1\cdot t_4]$ is a short-hand notation for $dt_1\cdots 
d_4$. Apply the preceding lemma, we have
$$
I \le \frac{\E |q(0)|^4 \|m\|_{\infty}^4 }{\eps^2} \sum_{p \in 
\mathcal{P}} \int_x^y\int_x^y\int_x^y\int_x^y \rho^{\frac 1 
2}(\frac{t_{p(1)} - t_{p(2)}}{\eps})\rho^{\frac 1 2}(\frac{t_{p(3)} - 
t_{p(4)}}{\eps}) d[t_{p(1)}\cdots t_{p(4)}].
$$
Note that we did not write absolute sign for the argument in the $\rho$ 
functions. We assume $\rho$ is extended to be defined on the whole $\R$ by 
letting $\rho(x) = \rho(|x|)$. There are $15$ terms in the sum above that are estimated in the same manners.  Let us look at one of them, with 
$p(1) = p(3) = 1$, $p(2) = 2$, and $p(4) = 3$.  We perform the following 
change of variables:
$$
\frac{t_1 - t_2}{\eps} \to t_2, \quad \frac{t_1 - t_3}{\eps} \to t_3, \quad  
t_1 \to t_1, \quad t_4 \to t_4.
$$
The Jacobian resulting from this change of variable cancels $\eps^2$ on the 
denominator. The integral becomes
\begin{equation}
\int_x^y dt_1 \int_x^y dt_4 \int_{\frac{t_1 - y}{\eps}}^{\frac{t_1 - 
x}{\eps}} \rho^{\frac 1 2}(t_2) dt_2\int_{\frac{t_1 - y}{\eps}}^{\frac{t_1 
- x}{\eps}} \rho^{\frac 1 2}(t_3) dt_3.
\end{equation}
This integral is finite and is bounded from above by
\begin{equation}
|x-y|^2 \|\rho^{\frac 1 2}\|_{1,\R}^2 = 4|x-y|^2\|\rho^{\frac 1 
2}\|_{1,\R_+}^2.
\end{equation}
The other terms in the sum have the same bound. Hence we have,
\begin{equation}
I \le \E|q(0)|^4 \times 15 \times 4|x-y|^2\|\rho^{\frac 1 2}\|_{1,\R_+}^2 
\|m\|_{\infty}^4.
\end{equation}
This verifies \eqref{eq:fmtint} and completes the proof.
\end{proof}

\subsection{Fractional Brownian motion}

For the convenience of the reader, we briefly review some essential 
properties of fractional Brownian motion (fBm), and stochastic integral 
with fBm integrator.

A fBm $W^H(t)$ with Hurst index $H$ is a mean-zero Gaussian process with 
$W^H(0) = 0$, stationary increments and $H$-self-similarity, that is, for 
$a > 0$,
\begin{equation}
\{ W^H(at)\}_{t \in \R} \overset{\mathscr{D}}{=} \{a^H W^H(t)\}_{ t \in 
\R},
\label{eq:Hss}
\end{equation}
where $\overset{\mathscr{D}}{=}$ means the equality in the sense of finite 
dimensional distributions. From this similarity relation, we deduce 
$\E[(W^H(t))^2] = |t|^{2H}\E[(W^H(1))^2]$. In particular, if 
$\E[(W^H(1))^2] = 1$ we say the fBm is standard. It follows from the 
stationarity of increments that the covariance function of $W^H(t)$ is 
given by
\begin{equation}
R^H(t,s) = \E\{W^H(t) W^H(s)\} = \frac{1}{2}\left(|t|^{2H} + |s|^{2H} - 
|s-t|^{2H} \right).
\label{eq:RHdef}
\end{equation}
When $H = 1/2$, the increments of $W^H$ are independent and the fBm reduces 
to the usual Brownian motion. For $H \ne 1/2$, the increments are 
stationary but not independent. 

Stochastic integrals with respect to fBm can be defined on many functional 
spaces. Note that $H = 1 - {\frac \alpha 2}$ is in the interval 
$(\frac{1}{2}, 1)$. In this case, a convenient functional space to define 
stochastic integral is
\begin{equation}
|\Gamma|^H = \left\{ f: \int_\R \int_\R |f(x)| |f(y)| |x - y|^{2(H-1)} dx 
dy < \infty \right\}.
\label{eq:sispace}
\end{equation}
It is easy to check, for instance from Hardy-Littlewood-Sobolev lemma 
\cite[\S 4.3]{LL}, that $L^1(\R)\cap L^2(\R) \subset L^{1/H} \subset 
|\Gamma|^H$.  Stochastic integrals against fBm do not satisfy It\^{o} 
isometry; instead, we have
\begin{equation}
\E\left\{\int_\R f(t) dW^H_t \int_\R h(s) dW^H_s \right\} = 
H(2H-1)\int_{\R^2} \frac{f(t) h(s)}{|t-s|^{2(1-H)}} dt ds.
\label{eq:lito}
\end{equation}

For a nice review on stochastic integral with respect to fractional 
Brownian motion, we refer the reader to \cite{VT-PTRF-00}.

%%%%%%%%%%%%
%%%%%%%%%%%%
%\bibliographystyle{siam}
%\bibliography{ref_msfem}

\end{document}